\title{Polynomiality of the Stirling coefficients of $c\big(\!\Pol^d(\mathbb{C}^n)\big)$ and Fano schemes}
\author{Andr\'as P. Juh\'asz}
\address{Alfr\'ed R\'enyi Institute of Mathematics Budapest, Hungary}
\email{juhasz.andris@gmail.com}
\author{L\'aszl\'o M. Feh\'er}
\address{Eotvos University Budapest}
\email{lfeher63@gmail.com}
\keywords{polynomiality of Stirling coefficients, equivariant Chern class of $c(\Pol^d(\C^n))$,
hypersurfaces containing linear subspaces, degree and Euler characteristics of Fano schemes}
\subjclass[2020]{05A15, 05A16, 11B73, 14N10, 55N91 }
\begin{document}
\begin{abstract}
  We prove that the $d$-dependence of $c(\Pol^d(\C^n))$, the Chern class of the
  $\GL(n)$-representation of degree $d$ homogeneous polynomials in $n$ complex variables
  is polynomial.
  We also study the asymptotics of the polynomial $d$-dependence of this Chern class.
  We apply these results to solve a conjecture of Manivel on the degree of varieties
  of hypersurfaces containing linear subspaces.
  We also prove new formulas for the degree and Euler characteristics of Fano schemes of lines
  for generic hypersurfaces.

  To prove the polynomial dependence of $c(\Pol^d(\C^n))$ we introduce the notion of Striling
  coefficients.
  This can be considered as a generalization of Stirling numbers. We express the Stirling
  coefficients in terms of specializations of monomial symmetric polynomials 
  so we can deduce their polynomiality and, in certain cases, their asymptotic behaviour.
\end{abstract}
\maketitle

\tableofcontents
\section{Introduction}
The original motivation for this paper was to calculate the Euler characteristics of various
coincident root loci of binary forms using equivariant Chern-Schwartz-MacPherson classes.

Those results will be published in a separate paper because we quickly realized that even the ``trivial" case,
the total Chern class of the $\GL(2)$-representation $\Pol^d(\C^2)$
\begin{equation}\label{eq:cpoldc2}
  c\big(\!\Pol^d(\C^2)\big)=\prod_{t=0}^{d}(1+tx_1+(d-t)x_2).
  \end{equation}
is worth studying:
It is a symmetric polynomial in the variables $x_1$ and $x_2$, and
our first goal was to study the coefficients of this polynomial in various bases of
the ring of symmetric polynomials in $x_1$ and $x_2$.

These coefficients are easy to calculate from \eqref{eq:cpoldc2} using a computer, but we show in this paper that these
coefficients are polynomials in $d$, and we calculate their leading terms.

Later we realized that our approach is suitable for a similar analysis of the coefficients of
the Chern class of the $\GL(n)$-representation $\Pol^d(\C^n)$,
\begin{equation}\label{eq_cPoldCn_singleproduct}
c\big(\!\Pol^d(\mathbb{C}^n)\big) = \prod_{\substack{(d_1,\dots,d_n) \\ d_1+\dots+d_n=d}}  \!\!
\left( 1+d_1 x_1+\dots+d_n x_n \right)
\end{equation}
for all $n$'s, not necessarily 2.

These results can be translated into enumerative result. For example, we prove a conjecture of Manivel on the degree of varieties of hypersurfaces containing linear subspaces (Theorem \ref{thm:degSigma_leadingterm}).

We will also prove new closed formulas for the degree and Euler characteristics of Fano schemes of lines for generic hypersurfaces: Theorem \ref{thm-deg-of-fano-of-lines}, and \eqref{fano-chi-delta1} and \eqref{fano-chi-delta2} in Section \ref{sec:euler}).

To study $c\big(\!\Pol^d(\C^2)\big)$---and more generally $c\big(\!\Pol^d(\C^n)\big)$---we develop a theory for products like \eqref{eq:cpoldc2}. We will call these products \emph{rising products}, see Definition \ref{def_risingprod_Stirlingcoef}.
A key result of the paper is that the coefficients of a rising product are polynomials in $d$. We will call these coefficients \emph{Stirling coefficients}.
The  Stirling numbers $\stir{m}{k}$, $ \begin{Bmatrix}
m \\
k 
\end{Bmatrix}  $ and the binomial coefficients $\displaystyle\binom{m}{k}$ are the simplest examples. Stirling coefficients include the generalized Stirling numbers defined in \cite{Tweedie}, central factorial numbers, and other generalizations. The literature on these generalizations is vast (See e.g. \cite{cigler,komatsu-stirling-level2,r-central,schmidt-gen-stirling}). Many of these generalizations are special cases of our Stirling coefficients.  For this reason we hope that the theory of rising products and their connection with the \emph{arithmetic specialization}  of monomial symmetric polynomials will be of independent interest. Here we  call $f(1,2,\dots,n)$ the arithmetic specialization of the polynomial $f\in \Q[x_1,x_2,\dots,x_n]$.
 
\subsection{Structure of the paper}
In Section \ref{sec_Stirlingcoefs} we introduce the notion of rising products. In Theorem \ref{prop:rising} we give a formula for the coefficients in terms of arithmetic specializations of monomial symmetric polynomials---what we will call Stirling coefficients---proving that they are polynomials in $d$. 
We also give degree estimates to these polynomials in Section \ref{sec:asymp-stirling}.

In Section \ref{sec_cpold} we apply the theory of rising products to prove polynomiality of the coefficients of $c\big(\!\Pol^d(\C^n)\big)$ in Theorem \ref{thrm_cpoldCnispoly} and calculate its leading term in \ref{thrm_leadingtermcPoldCn}. In Section \ref{sec:cpoldc2-rising} we give a closed formula for the coefficients for $n=2$. In Section \ref{subsec_23_345} we state Theorem \ref{thrm_23_345thrm} wich gives a significantly lower estimate for the coefficients in the elementary symmetric polynomial basis. We formulate a conjecture on the leading terms of these coefficients.
In Section \ref{sec_onclosedformulas} we contemplate on the role of closed formulas and generating functions and give a closed formula for the Euler class $c_{d+1}\big(\!\Pol^d(\C^2)\big)$ in Theorem \ref{thm-euler} which will be key to calculate the Euler characteristics of Fano schemes in Section \ref{sec:euler}. In Section \ref{sec:separately} we conjecture that $c_k\big(\!\Pol^d(\C^n)\big)$ is separately polynomial in $n$ and $d$.

In Section \ref{sec:enum} we turn to enumerative applications. In Section \ref{subsec_hypersurfswithlinsubspaces} we study the degree of varieties of hypersurfaces containing linear subspaces and prove a conjecture of Manivel in Theorem \ref{thm:degSigma_leadingterm}.
In Section \ref{sec:deg-of-fano} we give a closed formula for the
 degrees of Fano schemes of lines of generic hypersurfaces of degree $d$ in Theorem \ref{thm-deg-of-fano-of-lines}. In Section \ref{sec:euler}
we give closed formulas for the Euler characteristics of Fano schemes of lines of generic hypersurfaces of degree $d$.

In Appendix \ref{sec_23_345thrm}---via an analysis of the orbits of the $\GL(n)$-representation
$\Pol^d(\C^n)$ that leads to a quasi-polynomial behaviour---we prove Theorem
\ref{thrm_23_345thrm}.

\subsection*{Acknowledgements}
A.J. was partially supported by the NKKP grant K 146401.
We thank \'Arp\'ad T\'oth for an inspiring conversation on the rising products and Bal\'azs K\H om\H uves for sharing his knowledge with us.

\section{Rising products and Stirling coefficients}\label{sec_Stirlingcoefs}
To study \eqref{eq:cpoldc2} we introduce the notion of rising products. We give a formula for the coefficients---what we will call Stirling coefficients---proving that they are polynomials in $d$. We also give degree estimates to these polynomials in Section \ref{sec:asymp-stirling}.

\subsection{Notations}
The paper contains lots of lengthy expressions and technical arguments. To make life easier,
we try to collect some notions and notations we will use.

\medskip

Let us write $\lambda \vdash k$ for (proper integer) partitions of $k$.
By a \emph{weak partition} we mean a decreasing sequence of non-negative integers,
e.g. $\lambda=(5,3,3,0,0,0)$. We will also use the exponential form $(0^3,3^2,5)$.
For any (weak) partition $\lambda=\left(0^{m_0},1^{m_1},2^{m_2},\dots  \right)$ let
\[ \mult(\lambda)=\left( m_0,m_1,m_2,\dots \right) \]
denote the multiplicity vector of the elements of the partition.

Extending the notion of (proper integer) partitions,
we define a \emph{partition of a vector} $H \in \N^{\infty}$ to be a tuple
\begin{equation*}\label{eq_def_partitionofavector}
  J=\left( J_1, J_2, \dots\right) \vdash H,
\end{equation*}
such that
$0 \neq J_s \in \mathbb{N}^{\infty}$, $\sum J_s = H$ and $J_1 \geq J_2 \geq \dots$, where
$\geq$ denotes some (e.g. lexicographical) ordering of $\mathbb{N}^\infty$.
For example,
\begin{equation} \label{vectpart}
  \begin{split}
    \left\{ J \vdash \left( 2,0,1,0,\dots \right) \right\}= & \big\{ \! \left( (2,0,1,0,\dots) \right), \\
      & \left( (2,0,0,0,\dots),(0,0,1,0,\dots) \right),\\
      & \left( (1,0,1,0,\dots),(1,0,0,0,\dots) \right),\\
    & \left( (1,0,0,0,\dots),(1,0,0,0,\dots), (0,0,1,0,\dots) \right) \! \big\}.
  \end{split}
\end{equation}
To shorten the notation we will denote by $1_i \in \N^{\infty}$ the vector which is 1 at its $i$-th coordinate and 0 everywhere else.

We will also use the exponential notation, for example the last vector partition in \eqref{vectpart} can also be written as $(1_1^2,1_3)$.

For a vector $H=(H_1,\dots,H_n)$ the partition
\[ \left( 1_1^{H_1},\dots,1_n^{H_n} \right) \vdash H \]
will prove to be important. 

The notion of multiciplity naturally extends to multiciplities of partitions of vectors,
$\mult(J)$. For example, $\mult\!\left( \left( 1_1^{H_1},\dots,1_n^{H_n} \right) \right)=
(H_1,H_2,\dots,H_n)$.

We will denote the length of partitions $\lambda \vdash k$ and $J \vdash H$ by $l(\lambda)$
and $l(J)$. In both cases, length is finite by definition.

Given a vector $v$, $v_i$ $(i \in \N)$ will implicitely be used to denote its entries. 
For the vectors $v$ and $x$
let
\[ |v|=\sum_{i} v_i, \quad v!=\prod_{i} v_i! \quad \text{ and } x^v=\prod_{i} x_i^{v_i} .\]

\medskip 

Finally, the coefficient of a monomial $x^v$ in a polynomial $p$ will be denoted by
\[ \coef(x^v,p). \]

\subsection{Introducing rising products and Stirling coefficients}
\begin{definition}\label{def_risingprod_Stirlingcoef}
  Suppose that the formal power series
\[ P(d_0,\dots,d_r,t,x)=\sum_{E} P_E(d_0,\dots,d_r,t) x^{E} \in
\mathbb{Q}\left[ d_0,\dots,d_r,t \right]\left[ \left[ x \right] \right] \]
satisfies $P(d_0,\dots,d_r,t,0)=1$, where $x=(x_1,\dots,x_n)$.
Let $K(d_0,\dots,d_r) \in \mathbb{Q}[d_0,\dots,d_r]$ be an integer valued polynomial
in the ``parameters'' $d_0,\dots,d_r \in \N$. Then we call the product
\begin{equation}\label{eq_oneparameterproductstart}
  S[P,K](d_0,\dots,d_r,x):=\prod_{t=0}^{K(d_0,\dots,d_r)} P(d_0,\dots,d_r,t,x)=
  \sum_{H \in \N^n} S[P,K]_H(d_0,\dots,d_r) x^H
\end{equation}
a \emph{rising product}, and the coefficients $S[P,K]_H(d_0,\dots,d_r)$ \emph{Stirling coefficients}.
\end{definition}
We will often abbreviate by $d$ the sequence of parameters $d_0,\dots,d_r$.
We will also use the shorthand notations $S[P]:=S[P,K]$ and $S[P]_H:=S[P,K]_H$ in case 
$K(d)=d$, i.e. $K(d_0)=d_0$.

The goal of this section is to investigate the
$\left\{ d_0,\dots,d_r \right\}$-dependence of the Stirling coefficients.
We will show that the Stirling coefficients form polynomials. More precisely,
\begin{theorem}
\label{thrm_polynomialityofcoeffs_generalsetup}
  Suppose that the formal power series
  $ P(d_0,\dots,d_r,t,x) \in \mathbb{Q}[d_0,\dots,d_r,t][[x]] $ satisfies $P(d_0,\dots,d_r,t,0)=1$,
  where $x=(x_1,\dots,x_n)$ and $K(d_0,\dots,d_r) \in \mathbb{Q}[d_0,\dots,d_r]$ is integer valued.
  
   Let
  \begin{equation*}\label{eq_productofpolys}
    S[P,K](d_0,\dots,d_r,x):=
    \prod_{t=0}^{K(d_0,\dots,d_r)} P(d_0,\dots,d_r,t,x)=
    \sum_{H \in \N^n} S[P,K]_H(d_0,\dots,d_r) x^H.
  \end{equation*}

 Then for all multiindex $H$ the Stirling coefficient  
  $S[P,K]_H(d_0,\dots,d_r)$ can be described by a polynomial in  $\mathbb{Q}[d_0,\dots,d_r]$
  if $K(d_0,\dots,d_r) \ge -1$.
\end{theorem}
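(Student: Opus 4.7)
The plan is to expand the rising product combinatorially, extract the coefficient of $x^H$, and reduce to the polynomiality (in $K$) of certain symmetric sums. Writing $P(d,t,x)=1+\sum_{E\neq 0}P_E(d,t)\,x^E$ and expanding $\prod_{t=0}^{K(d)}P(d,t,x)$, any nonzero contribution to the coefficient of $x^H$ arises by choosing finitely many distinct indices $t_1,\dots,t_l\in\{0,1,\dots,K(d)\}$ together with nonzero multi-indices $E^{(1)},\dots,E^{(l)}$ whose sum is $H$. Grouping the multisets $\{E^{(1)},\dots,E^{(l)}\}$ into vector partitions $J=(J_1,\dots,J_l)\vdash H$ (in the sense of \eqref{vectpart}) yields the identity
$$S[P,K]_H(d)=\sum_{J\vdash H}\frac{1}{\mult(J)!}\sum_{\substack{(t_1,\dots,t_{l(J)})\in\{0,\dots,K(d)\}^{l(J)}\\ \text{pairwise distinct}}}\prod_{i=1}^{l(J)}P_{J_i}(d,t_i).$$
The outer sum is finite since $H$ is fixed, and the prefactor $1/\mult(J)!$ compensates for the overcounting caused by permuting equal parts of $J$ among the chosen indices.

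Since each $P_{J_i}(d,t)$ is a polynomial in $t$ with coefficients in $\Q[d]$, say $P_{J_i}(d,t)=\sum_{a}c_{J_i,a}(d)\,t^a$, the inner sum becomes a $\Q[d]$-linear combination of distinct-index power sums
$$\sum_{\substack{(t_1,\dots,t_l)\in\{0,\dots,K\}^l\\ \text{pairwise distinct}}}t_1^{a_1}\cdots t_l^{a_l}.$$
Such a sum depends only on the multiset $\{a_1,\dots,a_l\}$ and equals, up to an explicit combinatorial factor, the \emph{arithmetic specialization} $m_\lambda(0,1,\dots,K)$ of the monomial symmetric polynomial, where $\lambda$ is the partition obtained by sorting $(a_1,\dots,a_l)$. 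Since $p_\mu(0,1,\dots,K)=\prod_i\sum_{t=0}^{K}t^{\mu_i}$ is a polynomial in $K$ by Faulhaber's formula, and every $m_\lambda$ is a $\Q$-linear combination of the power-sums $p_\mu$, every arithmetic specialization $m_\lambda(0,1,\dots,K)$ is polynomial in $K$. The hypothesis $K\geq -1$ is needed only to accommodate the empty-product convention (the case $K=-1$, for which $S[P,K]_H=\delta_{H,0}$) within the same formula.

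Composing with the polynomial $K(d)\in\Q[d]$, every term of the outer sum becomes polynomial in $d$; summing over the finite set of vector partitions $J\vdash H$ then gives $S[P,K]_H\in\Q[d_0,\dots,d_r]$, as claimed. The conceptual heart of the proof is recognizing the inner distinct-index sums as arithmetic specializations of monomial symmetric polynomials, so that polynomiality reduces immediately to Faulhaber's classical formulas. The main technical obstacle lies in the combinatorial bookkeeping of the first paragraph: aligning the ordered-tuple expansion of the product with the unordered vector-partition indexing (the factor $1/\mult(J)!$) requires care when several parts $J_i$ coincide. This alignment is precisely what produces the clean formula in the monomial symmetric polynomial basis that underlies the degree estimates and asymptotic analysis carried out later in the paper.
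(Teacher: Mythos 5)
Your proposal is correct and follows essentially the same route as the paper: the distinct-index sums $\sum t_1^{a_1}\cdots t_l^{a_l}$ you identify are exactly the arithmetic specializations $\tilde{M}_\lambda(K)$ of the augmented monomial symmetric polynomials that the paper isolates in Proposition \ref{prop_productformula} and Theorem \ref{prop:rising}, and your Faulhaber/power-sum argument is the paper's Lemma \ref{lemma_tildeMlambdaspolys}. The only (cosmetic) difference is that the paper first proves the identity with formal variables $y_t$ and then specializes $y_t\mapsto t$, whereas you work with the specialized values directly.
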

In other words, there exists a formal power series in $\Q[d_0,\dots,d_r][[x]]$ that evaluates to
the rising product $S[P,K](d_0,\dots,d_r,x)$ for every $d_0,\dots,d_r$ with
$K(d_0,\dots,d_r) \ge -1$.
\bigskip

Our strategy to prove Theorem \ref{thrm_polynomialityofcoeffs_generalsetup} is that we first
prove a more general statement (Proposition \ref{prop_productformula}) where we ``replace $t$
with a variable $y_t$''. This allows us to show a deep connection with the monomial symmetric
polynomials. Finally in Lemma \ref{lemma_tildeMlambdaspolys} we show that arithmetic
specialization (i.e. substituting $t$ into the variable $y_t$) of a monomial symmetric polynomial
is a polynomial.

\subsection{First examples of rising products}  \label{sec:firstex}
In the examples below $K(d)=d$:
\begin{example}\label{ex_i}
  For $P(d,t,x)=1+tx$  we have
  \[ S[P](d,x)=\prod_{t=0}^{d}(1+tx)=\sum_{h=0}^{d+1}S[P]_h(d)x^h\]
  and
  \begin{equation*}
    S[P]_h(d)=\sigma_h(1,2,\dots,d)=\stir{d+1}{d+1-h},
\end{equation*}
where $\sigma_h$ is the $h$-th elementary symmetric polynomial and $\stir a b$ denotes the
Stirling number of the first kind.

It is well known that $\stir{d+1}{d+1-h}$ is a polynomial of degree $2h$ in $d$:
\begin{equation*}\label{stirling2binomial}
  S[P]_h(d)=\stir{d+1}{d+1-h}=\sum_{j=1}^{h}E_2(h,j)\binom{d+j}{2h},
\end{equation*}
where the $E_2(h,j)$'s are the  second-order Eulerian numbers.
For example,
\[ S[P]_1(d)=\binom{d+1}{2}, \,
  S[P]_2(d)=\binom{d+1}{4}+2\cdot\binom{d+2}{4}, \,
S[P]_3(d)=\binom{d+1}{6}+8\cdot\binom{d+2}{6}+6\cdot\binom{d+3}{6} . \]
\end{example}

\begin{example}\label{ex_secondkind}
 For $P(d,t,x)=\frac{1}{1-tx}=1+tx+t^2x^2+\cdots$  we have
\[ S[P](d,x)=\prod_{t=0}^{d}\frac{1}{1-tx}=\sum_{a=0}^{\infty}S[P]_a(d)x^a\]
and
\begin{equation*}
S[P]_a(d)=h_a(1,2,\dots,d)=\begin{Bmatrix}
d+a \\
a 
\end{Bmatrix},
\end{equation*}
where $h_a$ is the $a$-th complete homogeneous symmetric polynomial and $\begin{Bmatrix}
d+a \\
a 
\end{Bmatrix}$ denotes the corresponding
Stirling number of the second kind.
\end{example}

\begin{example}\label{ex_i^s}
 For $P(d,t,x)=1+t^sx$  we have
 \[ S[P](d,x)=\prod_{t=0}^{d}(1+t^s x)=\sum_{h=0}^{d+1}S[P]_h(d)x^h, \]
 where
 \begin{equation*}
   S[P]_h(d)=\sigma_h(1,2^s,\dots,d^s)=m_{s^h}(1,2,\dots,d),
 \end{equation*}
 where $m_\lambda$ is the monomial symmetric polynomial corresponding to the partition $\lambda$.
\end{example}
\begin{remark}
These numbers were called generalized Stirling numbers by Tweedie in \cite{Tweedie} and Stirling
numbers of the first kind with level $s$ by Komatsu in \cite{komatsu-stirling-level2}.
\end{remark}

\begin{example} \label{ex_i+i^2}
  For $P(d,t,x_1,x_2)=1+t x_1 + t^2x_2$ we have
  \[ S[P](d,x_1,x_2)=\prod_{t=0}^{d}(1+tx_1+t^2x_2)=
  \sum_{H \in \N^2}S[P]_{H}(d) x_1^{H_1} x_2^{H_2},  \]
  and with a similar argument we get that
  \[ S[P]_{H}(d)=m_{1^{H_1}2^{H_2}}(1,2,\dots,d).\]
\end{example}

These examples suggests that the coefficients are built from arithmetic specializations of monomial symmetric polynomials. The simplest example seems to be an exception:
\begin{example} \label{ex_i^0}    For $P(d,t,x)=1+x$
  \begin{equation*}\label{binom}
    \prod_{t=0}^d (1+x)=(1+x)^{d+1}=\sum_{h=0}^{d+1}\binom{d+1}{h}x^h.
  \end{equation*}
  The binomial coefficients are not arithmetic specializations of monomial symmetric polynomials.
  However, there is a natural extension of monomial symmetric polynomials which solves this problem. We introduce this extension in the next section.
\end{example}

\subsection{Versions of monomial symmetric polynomials}
To understand the connection between rising products and monomial symmetric polynomials we look at their generating functions.
To keep our formulas shorter, let us abbreviate by $\underline{y}_v$ the list of variables $y_0,y_1,\dots,y_v$.
\begin{proposition}\label{prop_genfn_monomsymproper}
  For any partition $\lambda=\left( 1^{m_1},\dots,n^{m_n} \right)$ we have
  \begin{equation*}\label{eq_genfn4monsymmpolys}
   m_\lambda(\underline{y}_v)= \coef\left( x_1^{m_1}\dots x_n^{m_n},
    \prod_{t=0}^v \Big( 1+ \sum_{s=1}^n y_t^{s} x_s \Big) \right)
.
  \end{equation*}
\end{proposition}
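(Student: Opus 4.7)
The plan is to expand the product on the right-hand side and match terms combinatorially against the definition of $m_\lambda$. The key observation is that when we distribute the product
\[ \prod_{t=0}^v \Big( 1+\sum_{s=1}^n y_t^s x_s \Big), \]
for each index $t \in \{0,1,\dots,v\}$ we must choose either the summand $1$ or exactly one summand of the form $y_t^{s_t} x_{s_t}$ with $s_t \in \{1,\dots,n\}$. The resulting term is $\prod_{t \in T} y_t^{s_t} x_{s_t}$, where $T \subseteq \{0,\dots,v\}$ records the factors where we did \emph{not} pick $1$.

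I would first observe that selecting the coefficient of $x_1^{m_1}\cdots x_n^{m_n}$ restricts these choices to those where the function $t \mapsto s_t$ defined on $T$ takes the value $s$ exactly $m_s$ times, for each $s=1,\dots,n$. In particular, $|T|=m_1+\dots+m_n=l(\lambda)$. Encoding such a choice as the exponent vector $\alpha=(\alpha_0,\dots,\alpha_v)$ with $\alpha_t=s_t$ for $t\in T$ and $\alpha_t=0$ otherwise, the contribution is exactly $y^\alpha=y_0^{\alpha_0}\cdots y_v^{\alpha_v}$.

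The second step is to identify the set of admissible $\alpha$ with the set of distinct permutations of the weak partition obtained by padding $\lambda=(1^{m_1},\dots,n^{m_n})$ with $v+1-l(\lambda)$ zeros: by construction $\alpha$ is any $(v+1)$-tuple of non-negative integers containing the value $s$ with multiplicity $m_s$ for each $s\ge 1$, and all remaining entries $0$. Summing $y^\alpha$ over all such $\alpha$ is, by definition, $m_\lambda(y_0,y_1,\dots,y_v)$, which matches the claimed formula.

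There is no real obstacle; the only mild care needed is the bookkeeping when $l(\lambda) > v+1$, in which case the set $T$ cannot exist, both sides vanish, and the identity holds trivially. Similarly, the $1^{m_1}$ part of the notation $\lambda=(1^{m_1},\dots,n^{m_n})$ should be read as ``the part $s$ has multiplicity $m_s$'', so the indexing by $s$ on both sides matches unambiguously.
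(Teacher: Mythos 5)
Your proof is correct and complete; the paper states Proposition \ref{prop_genfn_monomsymproper} without proof, treating it as immediate from the definition of $m_\lambda$, and your expansion argument (each factor contributes $1$ or $y_t^{s_t}x_{s_t}$, extracting the coefficient of $x_1^{m_1}\cdots x_n^{m_n}$ forces the multiplicities, and the admissible choices biject with the distinct monomials of $m_\lambda(y_0,\dots,y_v)$) is exactly the implicit reasoning. The handling of the degenerate case $l(\lambda)>v+1$ is a nice touch and consistent with the paper's later use of this vanishing in Remark \ref{rmrk_Mlambdaworksfornonnegative}.
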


Then Examples \ref{ex_i}, \ref{ex_i^s} and \ref{ex_i+i^2} easily follow by substituting
$t$ into the variable $y_t$ for $t=0,\dots,v$.
We will call this substitution the arithmetic specialization.

\medskip

Modifying the above generating function, we can extend the notion of monomial symmetric polynomials
from partitions to \emph{weak partitions} possibly containing zeros.
\begin{definition}\label{def_monomsymmpolyweak}
  Let $\lambda=\left( 0^{m_0},1^{m_1},\dots,n^{m_n} \right)$ be a weak partition. Then the monomial symmetric function corresponding to $\lambda$ is defined by 
  \begin{equation}\label{eq_genfn4monsymmpolys_weakpartitions}
    m_\lambda(\underline{y}_v):=
    \coef\left( x_0^{m_0} x_1^{m_1}\dots x_n^{m_n},
    \prod_{t=0}^v \Big( 1+ \sum_{s=0}^n y_t^{s} x_s \Big) \right)
  \end{equation}
\end{definition}
Note that for every weak partition $\lambda=\left( 0^{m_0},1^{m_1},\dots,n^{m_n} \right)$ and
$v \in \N$ we have
\begin{equation}\label{eq_monsymmpolforweak_vsfornozeropart}
  m_\lambda(\underline{y}_v)=\binom{v+1-(m_1+\dots+m_n)}{m_0} m_{\lambda^*}(\underline{y}_v),
\end{equation}
where $\lambda^*$ denotes the nonzero part of $\lambda$. In particular, for
$\lambda=\left( 0^{m_0} \right)$ we get back the
polynomial
$m_{0^{m_0}}(\underline{y}_v)=\binom{v+1}{m_0}$ of Example \ref{ex_i^0}.

\medskip

Augmented monomial symmetric polynomials corresponding to partitions
$\lambda=\left( 1^{m_1},\dots,n^{m_n} \right)$ are usually defined as
\begin{equation}\label{eq_augmentedmonsymm_def}
  \tilde{m}_\lambda(\underline{y}_v)=\mult(\lambda)! m_\lambda(\underline{y}_v).
\end{equation}
It has an other description,
\begin{equation}\label{eq_augmentedmonsymm_injectivefns}
  \tilde{m}_\lambda(\underline{y}_v)=
  \sum_{f\in\rm{Inj} }
  \prod_{s=1}^{l(\lambda)} y_{f(s)}^{\lambda_s},
\end{equation}
---where the summation is over the set $\rm{Inj}$ of injective functions
 from
$\left\{ 1,\dots,l(\lambda) \right\}$ to
$\left\{ 0,1,\dots,v \right\}$---which is better-suited for our needs,
see Proposition \ref{prop_productformula}.
We also use \eqref{eq_augmentedmonsymm_injectivefns} to define augmented monomial symmetric
polynomials for weak partitions $\lambda=\left( 0^{m_0},1^{m_1},\dots,n^{m_n} \right)$.
Then \eqref{eq_augmentedmonsymm_def} still holds but
 the analog of \eqref{eq_monsymmpolforweak_vsfornozeropart} becomes
 \begin{equation}\label{eq_augmonsymmpolforweak_vsfornozeropart}
  \tilde{m}_\lambda(\underline{y}_v)=
  \binom{v+1-(m_1+\dots+m_n)}{m_0} m_0! \,  \tilde{m}_{\lambda^*}(\underline{y}_v)
\end{equation}

\medskip

Finally, we will also write $m_w(\underline{y}_v)$ and $\tilde{m}_w(\underline{y}_v)$ for vectors
$w$ of non-negative entries, not necessarily decreasing.
By this we mean the (augmented) monomial symmetric polynomial indexed by the (weak) partition we
get by sorting $w$.

\subsection{The parametrized rising product formula }
We start making steps towards the proof of Proposition
\ref{thrm_polynomialityofcoeffs_generalsetup} by
generalizing \eqref{eq_genfn4monsymmpolys_weakpartitions}.
\begin{proposition}\label{prop_productformula}
  Suppose that the formal power series
  $ P(d,y,x) \in \mathbb{Q}[d,y][[x]] $
  satisfies $P(d,y,\underline{0})=1$,  where
  $d=(d_0,\dots,d_r)$.
  
  Consider the expansion of the following product
  \begin{equation*}
    A[P](d,\underline{y}_v,x):=
    \prod_{t=0}^{v} P(d,y_t,x)=
    \sum_{H} A[P]_H(d,\underline{y}_v) x^H.
  \end{equation*}
  Then for each coefficient we have
  \begin{equation}\label{eq_productdformulacoefficient}
  A[P]_H(d,\underline{y}_v)=
    \sum_{J\vdash H}
    \frac{1}{\mult(J)!}
    \sum_{\lambda \in \N^{l(J)} }
    \left( \tilde{m}_\lambda(\underline{y}_v)
\prod_{s=1}^{l(J)} P_{J_s,\lambda_s}(d) \right),
\end{equation}
  where 
 \begin{equation}\label{defofpjm}
  P(d,y,x)=\sum_E P_{E}(d,y) x^E =
  \sum_E \sum_{m } \left( P_{E,m}(d) y^m \right) x^E.
 \end{equation}
 Notice that in \eqref{defofpjm} there are only finitely many $m$'s for which $P_{E,m}(d)$ is
 nonzero.
 This implies that there are only finitely many $\lambda$'s in
 \eqref{eq_productdformulacoefficient} for which the corresponding term is nonzero,
 so the sum is well defined.
 More explicitely, if we denote by $I_P(E)$ the support of $P_E(d)$, then it is enough to run the sum for $\lambda \in \bigtimes_{s=1}^{l(J)} I_P(J_s)$.
 \end{proposition}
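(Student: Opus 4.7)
The plan is to expand the product factor by factor, regroup the terms by the underlying multiset of non-zero contributions, and recognize the resulting sum over injective functions as an augmented monomial symmetric polynomial via \eqref{eq_augmentedmonsymm_injectivefns}.

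First, since $P(d,y,\underline{0})=1$ we have $P_0(d,y)=1$. Expanding each factor as $P(d,y_t,x)=\sum_E P_E(d,y_t)\,x^E$ and multiplying out, the coefficient of $x^H$ is
\[
  A[P]_H(d,\underline{y}_v)=\sum_{\substack{(E_0,\dots,E_v)\in (\N^n)^{v+1}\\ E_0+\cdots+E_v=H}}\ \prod_{t=0}^{v}P_{E_t}(d,y_t).
\]

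Next, I would group these tuples by the multiset of their non-zero entries, which is precisely a partition $J=(J_1,\dots,J_{l(J)})\vdash H$. Each such multiset is realized by choosing an injection $f\colon\{1,\dots,l(J)\}\to\{0,\dots,v\}$ and setting $E_{f(s)}=J_s$ (and $E_t=0$ off the image of $f$). Two injections yield the same tuple iff they differ by a permutation of $\{1,\dots,l(J)\}$ stabilizing the sequence $J$, and the stabilizer has order $\mult(J)!$. Hence
\[
  A[P]_H=\sum_{J\vdash H}\frac{1}{\mult(J)!}\ \sum_{f\in\mathrm{Inj}}\ \prod_{s=1}^{l(J)}P_{J_s}(d,y_{f(s)}).
\]

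Then I would expand $P_{J_s}(d,y)=\sum_m P_{J_s,m}(d)\,y^m$ in each factor and interchange the two summations:
\[
  \sum_{f\in\mathrm{Inj}}\prod_{s=1}^{l(J)}P_{J_s}(d,y_{f(s)})=\sum_{\lambda\in \N^{l(J)}}\Bigl(\prod_{s=1}^{l(J)}P_{J_s,\lambda_s}(d)\Bigr)\sum_{f\in\mathrm{Inj}}\prod_{s=1}^{l(J)}y_{f(s)}^{\lambda_s}.
\]
By the injective-function description \eqref{eq_augmentedmonsymm_injectivefns} (applied to the vector $\lambda\in\N^{l(J)}$, using the convention that $\tilde m_\lambda$ equals $\tilde m_{\lambda'}$ where $\lambda'$ is the weak partition obtained by sorting $\lambda$), the inner sum is exactly $\tilde m_\lambda(\underline{y}_v)$. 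Substituting gives \eqref{eq_productdformulacoefficient}.

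Since each $P_E(d,y)$ is polynomial in $y$, its $y$-support $I_P(E)$ is finite, so the $\lambda$-sum reduces to a finite sum over $\bigtimes_s I_P(J_s)$, and only finitely many $J\vdash H$ occur. The only subtle step is the combinatorial factor $1/\mult(J)!$; everything else is bookkeeping. I expect no real obstacle beyond carefully tracking the correspondence between ordered tuples $(E_0,\dots,E_v)$, the unordered partitions $J$, and the injective listings $f$.
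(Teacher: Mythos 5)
Your proof is correct and follows essentially the same route as the paper's: expand the product, group the nonzero factors into a vector partition $J\vdash H$ weighted by $1/\mult(J)!$ over injections $f$, then interchange sums and identify $\sum_{f}\prod_s y_{f(s)}^{\lambda_s}$ with $\tilde m_\lambda(\underline{y}_v)$ via \eqref{eq_augmentedmonsymm_injectivefns}. The only difference is that you justify the factor $1/\mult(J)!$ explicitly through the stabilizer of the sequence $J$, a step the paper's proof asserts without detail.
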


 \begin{proof}
   Multiplying the terms, we can easily expand the coefficient
   $A[P]_H(d,\underline{y}_v)$ as
   \begin{equation*}\label{eq_coeff_combinatorialformula}
     A[P]_H(d,\underline{y}_v)=
     \sum_{J \vdash H} \underbrace{\frac{1}{\mult(J)!}
     \sum_{f \in \rm{Inj}}
   \prod_{s=1}^{l(J)} P_{J_s}(d,y_{f(s)})}_
     {A_{J}(d,\underline{y}_v)},
   \end{equation*}
   where the latter summation is over the set $\rm{Inj}$ of injective functions from
$\left\{ 1,\dots,l(J) \right\}$ to $\left\{ 0,1,\dots,v \right\}$.
   
   For each $J=\left( J_1,\dots,J_{l(J)} \right) \vdash H$ we have
   \begin{equation*}  \label{eq_coeff_combinatorialformulafinal}
   \begin{split}
   A_{J}(d,\underline{y}_v)&=
   \frac{1}{\mult(J)!}
   \sum_{f \in \rm{Inj}}
   \prod_{s=1}^{l(J)} P_{J_s}(d,y_{f(s)})
   \\
   &=\frac{1}{\mult(J)!} \sum_{f \in \rm{Inj}}
   \prod_{s=1}^{l(J)} \sum_{m } P_{J_s,m}(d) y_{f(s)}^m 
   \\
   &=\frac{1}{\mult(J)!} \sum_{f \in \rm{Inj}}
   \sum_{\lambda \in \N^{l(J)}}
   \prod_{s=1}^{l(J)} P_{J_s,\lambda_s}(d) y_{f(s)}^{\lambda_s}
   \\
   &=\frac{1}{\mult(J)!}
 \sum_{\lambda \in \N^{l(J)}}
 \prod_{s=1}^{l(J)} \left( P_{J_s,\lambda_s}(d) \right)
 \sum_{f \in \rm{Inj}} \prod_{s=1}^{l(J)}
 y_{f(s)}^{\lambda_s}
 \\
  &=\frac{1}{\mult(J)!}
  \sum_{\lambda \in \N^{l(J)}}
  \left( \tilde m_\lambda(\underline{y}_v)
  \prod_{s=1}^{l(J)}P_{J_s,\lambda_s}(d) \right),
   \end{split}
\end{equation*}
where---as we mentioned before at \eqref{eq_productdformulacoefficient}---there are only finitely many $\lambda$'s  for which the corresponding term is nonzero, so the sum is well defined.

 \end{proof}

 If the formal power series $P(d,y,x) \in \Q[d,y][[x]]$ is complicated,
 then the formula \eqref{eq_productdformulacoefficient} provided by Proposition
 \ref{prop_productformula} will also be complicated, which limits its usefullness for actual
 calculations.

 There are some simple cases, however, where we can obtain reasonable formulas. For example, 
 generalizing Proposition \ref{prop_genfn_monomsymproper} and Definition
 \ref{def_monomsymmpolyweak},

\begin{proposition}\label{prop:simple-coeffs}
Let $E=\left( E_1,\dots,E_{n}\right)=\left( 0^{e_0},1^{e_1},\dots,z^{e_z} \right) \in \N^{n}$
and consider the polynomial
\[ P(d,y,x)= 1+ \sum_{s=1}^n y^{E_s}x_s. \]
Then for any exponent vector $H=(H_1,\dots,H_n)$ with coordinates $H_s \neq 0$ we have
\[
  A[P]_H(d,\underline{y}_v)=
  \left( \left. H \right|_{E_i=1} \right) \dots \left( \left. H \right|_{E_i=z} \right)
      m_\lambda(\underline{y}_v),
    \]
where
\begin{gather*}
  \text{for elements $j$ of $E$} \quad \left. H \right|_{E_i=j}:=\left( H_i : E_i=j \right)
    \text{ is a vector},
    \\
    \text{for vectors }V=(V_1,\dots,V_t) \quad \left( V \right):=\binom{|V|}{V_1,\dots,V_t} \text{ is the multinomial coefficient and}
  \\
\lambda:=\left(
  0^{\left| \left. H \right|_{E_i=0}\right|},
    1^{\left| \left. H \right|_{E_i=1} \right|}, \dots,z^{\left| \left. H \right|_{E_i=z} \right|}
    \right) \text{ is the corresponding weak partition.}
\end{gather*}
\end{proposition}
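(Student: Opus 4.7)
The plan is to prove the formula by a direct combinatorial expansion of $A[P](d,\underline{y}_v,x) = \prod_{t=0}^v \bigl( 1 + \sum_{s=1}^n y_t^{E_s} x_s \bigr)$, grouping the contributions by the common level $j = E_s$, and then recognizing the $y$-part via the generating-function definition \eqref{eq_genfn4monsymmpolys_weakpartitions} of $m_\lambda$.

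First I would record the obvious expansion: each factor contributes either $1$ or some summand $y_t^{E_s} x_s$, and to land on the monomial $x^H$ we must choose, for each $s \in \{1,\dots,n\}$, a subset $T^{(s)} \subseteq \{0,\dots,v\}$ of size $H_s$ designating which factors supplied $x_s$, with the $T^{(s)}$'s pairwise disjoint. This yields
\[ A[P]_H(d,\underline{y}_v) \;=\; \sum_{(T^{(s)})} \prod_{s=1}^n \prod_{t \in T^{(s)}} y_t^{E_s}. \]

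The key observation is that the resulting $y$-monomial depends on the data only through the level $j = E_s$, not through $s$ itself. Introducing $T_j := \{ s : E_s = j\}$ and $\Omega_j := \bigsqcup_{s \in T_j} T^{(s)}$, one has $|\Omega_j| = \sigma_j := \sum_{s \in T_j} H_s = |\,H|_{E_i = j}\,|$, and
\[ \prod_{s=1}^n \prod_{t \in T^{(s)}} y_t^{E_s} \;=\; \prod_j \prod_{t \in \Omega_j} y_t^j, \]
which is independent of how each $\Omega_j$ is refined into the pieces $(T^{(s)})_{s \in T_j}$. I would then re-sum by first fixing the disjoint $\Omega_j$'s of sizes $\sigma_j$ and then distributing each $\Omega_j$ among the $s \in T_j$ with prescribed fibre sizes $H_s$; the number of such refinements at level $j$ is the multinomial $\binom{\sigma_j}{(H_s)_{s \in T_j}} = \bigl( H|_{E_i=j}\bigr)$, so
\[ A[P]_H(d,\underline{y}_v) \;=\; \prod_j \bigl( H|_{E_i=j}\bigr) \cdot \sum_{\substack{\Omega_0,\dots,\Omega_z \text{ disjoint} \\ |\Omega_j| = \sigma_j}} \prod_j \prod_{t \in \Omega_j} y_t^j. \]

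Finally I would identify the residual sum with $m_\lambda(\underline{y}_v)$ for $\lambda = (0^{\sigma_0}, 1^{\sigma_1}, \dots, z^{\sigma_z})$: it is precisely $\coef\bigl(x_0^{\sigma_0} x_1^{\sigma_1} \cdots x_z^{\sigma_z}, \prod_t (1 + \sum_s y_t^s x_s)\bigr)$, which by Definition \ref{def_monomsymmpolyweak} equals $m_\lambda(\underline{y}_v)$. Assembling the pieces yields the claimed formula. The main conceptual step is the regrouping by level $j$; the only real technicality is the handling of $j = 0$ when $E$ has several zero entries, where the factor $\binom{v+1-\sum_{j \geq 1}\sigma_j}{\sigma_0}$ hidden inside the weak monomial $m_\lambda$ via \eqref{eq_monsymmpolforweak_vsfornozeropart} has to be reconciled with the count of choices for $\Omega_0$. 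Everything else is routine expansion.
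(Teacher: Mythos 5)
The paper states this proposition without a proof, so there is nothing to compare against on that side; your direct expansion is the natural argument, and its main steps are sound. The supports $T^{(s)}$ must indeed be pairwise disjoint of sizes $H_s$, the resulting $y$-monomial depends only on the unions $\Omega_j$, the number of refinements of a fixed $\Omega_j$ into fibres of sizes $(H_s)_{E_s=j}$ is the multinomial $\bigl(H|_{E_i=j}\bigr)$, and the residual sum over disjoint $\Omega_0,\dots,\Omega_z$ is exactly $m_\lambda(\underline{y}_v)$ by Definition \ref{def_monomsymmpolyweak}.

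The problem is your last sentence. What you have actually derived is
\begin{equation*}
A[P]_H(d,\underline{y}_v)=\prod_{j=0}^{z}\bigl(H|_{E_i=j}\bigr)\,m_\lambda(\underline{y}_v),
\end{equation*}
with the level-$0$ multinomial included, whereas the stated formula (and the example following it) runs the product only over $j=1,\dots,z$. These disagree as soon as at least two entries of $E$ equal $0$ and the corresponding $H_s$ give a nontrivial $\bigl(H|_{E_i=0}\bigr)$: the factor $\binom{v+1-(m_1+\dots+m_n)}{m_0}$ hidden inside $m_\lambda$ via \eqref{eq_monsymmpolforweak_vsfornozeropart} counts the \emph{unordered} choices of $\Omega_0$ and does not absorb the refinement count $\bigl(H|_{E_i=0}\bigr)$. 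Concretely, for $E=(0,0,1)$, $H=(1,1,1)$, $v=2$, the coefficient of $x_1x_2x_3$ in $\prod_{t=0}^{2}(1+x_1+x_2+y_tx_3)$ is $2(y_0+y_1+y_2)$, while $\bigl(H|_{E_i=1}\bigr)\,m_{(0^2,1)}(\underline{y}_2)=y_0+y_1+y_2$; the missing factor is exactly $\bigl(H|_{E_i=0}\bigr)=\binom{2}{1,1}=2$. So the ``reconciliation at $j=0$'' that you defer as a technicality is not one that goes through: your computation is correct and the proposition as printed omits the factor $\bigl(H|_{E_i=0}\bigr)$ (it is literally correct only when at most one $E_s$ is zero, as in Example \ref{ex_i^0}). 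You should either carry that factor into the statement or flag the discrepancy explicitly, rather than asserting that assembling the pieces yields the claimed formula.
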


\begin{example}
To determine the coefficient of
\[ x_1^1x_2^2x_3^1x_4^3x_5^1x_6^1x_7^2x_8^5 \]
in
\[ A[P](d,\underline{y}_v,x)=\prod_{t=0}^v \left( 1+ y_t^0 x_1+ y_t^0 x_2+ y_t^1 x_3+ y_t^1 x_4+ y_t^3 x_5+ y_t^3 x_6+
y_t^3 x_7+ y_t^4 x_8 \right) \]
let
\[ H=\left( 1,2,1,3,1,1,2,5 \right) \quad \text{ and } \quad E=\left( 0,0,1,1,3,3,3,4 \right) .\]
Then
\begin{gather*}
  \left. H \right|_{E_i=0}=\left( 1,2 \right) \quad
  \left. H \right|_{E_i=1}=\left( 1,3 \right) \quad
    \left. H \right|_{E_i=3}=\left( 1,1,2 \right) \quad
      \left. H \right|_{E_i=4}=\left( 5 \right), \\
	\lambda=(0^3,1^4,4^4,4^5),
\end{gather*}
and
\[ A[P]_H(d,\underline{y}_v) = \coef(x^H, A[P](d,\underline{y}_v,x))=
\binom{4}{1,3}\binom{4}{1,1,2}\binom{5}{5} m_{\lambda}(\underline{y}_v). \]
\end{example}

\subsection{Expressing Stirling coefficients in terms of arithmetic specialization of monomial symmetric polynomials}
By definition, the rising product \eqref{eq_oneparameterproductstart} is obtained from its parameterized version
$A[P](d,\underline{y}_v,x)$ by substitution:
\begin{equation*}\label{eq:main-sub}
  S[P,K](d,x)=\sub\big[A[P](d,\underline{y}_v,x);
  v \mapsto K(d),y_t\mapsto t\big].
\end{equation*}
Therefore, its coefficients can also be obtained by substitution.
\begin{definition}\label{def:arithmetic-m} We denote the arithmetic specialization of the augmented monomial symmetric polynomial by
\[ \tilde{M}_\lambda(v):=\sub\left[ \tilde{m}_\lambda(\underline{y}_v);y_t \mapsto t \right]. \]
\end{definition}

Using this notation, Proposition \ref{prop_productformula} gives that

\begin{theorem}\label{prop:rising} Given a rising product $S[P,K](d,x)$,
for every $H \in \N^{\infty}$ the corresponding Stirling coefficient is
\begin{equation}\label{eq_Stirlingcoeffs_inMtildas}
  S[P,K]_H(d_0,\dots,d_r)=
  \sum_{J\vdash H}
  \frac{1}{\mult(J)!}
  \sum_{\lambda \in   \N^{l(J)} }
  \prod_{s=1}^{l(J)} \left( P_{J_s,\lambda_s}(d) \right)
  \underbrace{\sub\left[ \tilde{m}_\lambda(\underline{y}_v);v \mapsto K(d),y_t \mapsto t \right]}_
  {\tilde{M}_\lambda(K(d))}.
\end{equation}
\end{theorem}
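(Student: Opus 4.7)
The plan is to obtain Theorem \ref{prop:rising} as an essentially formal consequence of Proposition \ref{prop_productformula}, using the substitution identity
\[ S[P,K](d,x)=\sub\bigl[A[P](d,\underline{y}_v,x);\ v\mapsto K(d),\ y_t\mapsto t\bigr] \]
that was already noted just before the statement. Since the substitution only touches the variable $v$ (which becomes $K(d)$) and the variables $y_0,y_1,\dots$ (each $y_t$ becoming the integer $t$), and since extracting the coefficient of $x^H$ commutes with a substitution that involves only $d$ and the $y_t$'s (not the $x_s$'s), I would first observe that
\[ S[P,K]_H(d) = \sub\bigl[A[P]_H(d,\underline{y}_v);\ v\mapsto K(d),\ y_t\mapsto t\bigr]. \]

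Next I would plug in the explicit formula from Proposition \ref{prop_productformula}:
\[ A[P]_H(d,\underline{y}_v)=\sum_{J\vdash H}\frac{1}{\mult(J)!}\sum_{\lambda\in \N^{l(J)}} \tilde{m}_\lambda(\underline{y}_v)\prod_{s=1}^{l(J)} P_{J_s,\lambda_s}(d), \]
where (as Proposition \ref{prop_productformula} explicitly points out) the inner sum over $\lambda$ is finite because each $P_E(d,y)$ has only finitely many nonzero $y$-coefficients $P_{E,m}(d)$. In each summand, the factors $P_{J_s,\lambda_s}(d)$ are untouched by the substitution, while the single remaining $y$-dependent factor $\tilde{m}_\lambda(\underline{y}_v)$ is transformed exactly into $\tilde{M}_\lambda(K(d))$ by Definition \ref{def:arithmetic-m} composed with $v\mapsto K(d)$. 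Pulling the substitution through the finite sum then yields precisely formula \eqref{eq_Stirlingcoeffs_inMtildas}.

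There is essentially no obstacle here beyond bookkeeping: all the combinatorial content is already packaged in Proposition \ref{prop_productformula}, and Theorem \ref{prop:rising} is a substitution statement. The only subtle point I would be careful about in writing it up is to justify that one may interchange the (finite) sum over $J\vdash H$ and $\lambda\in\bigtimes_s I_P(J_s)$ with the substitution, and that the substitution of the $y_t$'s indeed commutes with the extraction of the $x^H$-coefficient—both of which follow because the substitution acts on coefficients in $\Q[d][y_0,y_1,\ldots]$ and leaves the $x_s$'s alone, so it is a ring homomorphism on $\Q[d,y_0,y_1,\ldots][[x]]$ that preserves monomials in $x$.
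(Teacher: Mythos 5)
Your proposal is correct and matches the paper's own treatment: the paper derives Theorem \ref{prop:rising} precisely by substituting $v\mapsto K(d)$ and $y_t\mapsto t$ into the coefficient formula of Proposition \ref{prop_productformula}, exactly as you do. The extra care you take about the substitution commuting with coefficient extraction and with the finite sums is sound bookkeeping that the paper leaves implicit.
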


\medskip

Therefore, to complete the proof of the polynomiality of the Stirling coefficients
all is left to do is to show
that for every weak partition $\lambda$ the function  $v \mapsto \tilde{M}_\lambda(v)$
is in $\Q[v]$. More precisely, 

\begin{lemma}\label{lemma_tildeMlambdaspolys}
  For every weak partition $\lambda$ there exists a polynomial $\tilde{M}_\lambda(v) \in \Q[v]$
  such that
  \[ \tilde{M}_\lambda(v)=\tilde{m}_{\lambda}(0,1,\dots,v)
  \quad (\text{ for } v \ge -1
    \footnote{By definition, $\tilde{M}_{\lambda}(-1)=0$.}
). \]
  Moreover, the leading term of $\tilde{M}_\lambda(v)$ is 
 \begin{equation}\label{eq_leadingtermMtuple}
   \prod_i \left( \frac{1}{\lambda_i+1} \right) v^{|\lambda|+l(\lambda)}
  =  \prod_i  \frac{v^{\lambda_i+1}}{\lambda_i+1}.
\end{equation}
\end{lemma}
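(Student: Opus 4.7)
The plan is to reduce $\tilde{M}_\lambda(v)$ to the classical power sums $p_j(v):=\sum_{t=0}^{v}t^{j}$, which by Faulhaber's formula are polynomials in $v$ of degree $j+1$ with leading coefficient $\tfrac{1}{j+1}$ and (by the Bernoulli-polynomial description) vanish at $v=-1$; note this includes the case $p_0(v)=v+1$.

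Starting from the injective-function description \eqref{eq_augmentedmonsymm_injectivefns} and substituting $y_t\mapsto t$, I write, with $k:=l(\lambda)$,
\[
  \tilde{M}_\lambda(v)=\sum_{f\colon \{1,\dots,k\}\hookrightarrow\{0,\dots,v\}}\prod_{s=1}^{k} f(s)^{\lambda_s}.
\]
Dropping the injectivity constraint gives the unrestricted product $\prod_{s=1}^{k} p_{\lambda_s}(v)$, a polynomial in $v$ of degree $|\lambda|+k$ with leading coefficient $\prod_s \tfrac{1}{\lambda_s+1}$. To recover the injective sum I would group arbitrary functions by the set partition $\pi_f$ of $\{1,\dots,k\}$ whose blocks are the nonempty fibres of $f$; the unrestricted sum decomposes as $\sum_{\pi\in\Pi_k}$ of injective sums indexed by the blocks of $\pi$, each block contributing an exponent equal to the sum of the $\lambda_s$ over that block. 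Standard M\"obius inversion on the set-partition lattice $\Pi_k$ then expresses $\tilde{M}_\lambda(v)$ as a $\mathbb{Z}$-linear combination of products of $p_j$'s indexed by $\pi\in\Pi_k$, proving $\tilde{M}_\lambda(v)\in\mathbb{Q}[v]$.

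For the leading term, observe that every $\pi\in\Pi_k\setminus\{\hat 0\}$ has strictly fewer than $k$ blocks, so its product contributes degree $|\lambda|+(\text{number of blocks of }\pi)<|\lambda|+k$. Only $\pi=\hat 0$ reaches the top degree, contributing exactly $\prod_{s=1}^{k} p_{\lambda_s}(v)$, whose leading term is $\prod_s \tfrac{v^{\lambda_s+1}}{\lambda_s+1}$, matching \eqref{eq_leadingtermMtuple}. The boundary value $\tilde{M}_\lambda(-1)=0$ is immediate for every nonempty $\lambda$, since each factor $p_j$ vanishes at $v=-1$.

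The only real work---and the main obstacle---is the M\"obius-inversion bookkeeping and confirming that no partition other than $\hat 0$ reaches the top degree. A cleaner equivalent route is induction on $k=l(\lambda)$: expand $\prod_s p_{\lambda_s}(v)=\tilde{M}_\lambda(v)+R(v)$, where $R$ collects the sums over non-injective functions and can be rewritten, by the coincidence-merging argument, as a positive integer combination of $\tilde{M}_{\lambda'}(v)$ for weak partitions $\lambda'$ obtained by merging some parts of $\lambda$ (so $l(\lambda')<k$); the inductive hypothesis gives both polynomiality and the needed degree drop. Weak partitions with zero parts require no separate treatment, since $p_0(v)=v+1$ behaves identically and its leading coefficient $1=\tfrac{1}{0+1}$ fits \eqref{eq_leadingtermMtuple}.
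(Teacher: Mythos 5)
Your proof is correct and follows essentially the same route as the paper: both reduce $\tilde{M}_\lambda$ to Faulhaber power sums via an expansion of $\tilde{m}_\lambda$ into products of power sums, read off the leading term from the unique ``unmerged'' term (all merged terms having strictly fewer factors, hence strictly smaller degree $|\lambda|+l$), and obtain $\tilde{M}_\lambda(-1)=0$ from the divisibility of each Faulhaber polynomial by $v+1$. The only real difference is that the paper cites the expansion $\tilde m_\lambda = p_\lambda + \sum_{\mu\in\partial\lambda} c_\mu p_\mu$ from Macdonald and routes weak partitions with zero parts through \eqref{eq_augmonsymmpolforweak_vsfornozeropart}, whereas you derive the equivalent identity by M\"obius inversion on the coincidence lattice directly from \eqref{eq_augmentedmonsymm_injectivefns}, which makes the argument self-contained and treats zero parts uniformly since $p_0(v)=v+1$ fits the same pattern.
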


\begin{proof}
For every partition $\lambda=(1^{m_1},\dots,n^{m_n})$ the augmented monomial symmetric polynomial
$\tilde m_\lambda$ can be expressed as a polynomial of power sum symmetric polynomials $p_k$
(see \cite{macdonald1998symmetric}):
\begin{equation}\label{eq_augmonsymm_inpowersumbasis}
  \tilde m_{\lambda}(\underline{y}_v)=p_\lambda(\underline{y}_v)+
  \sum_{\mu \in \partial \lambda} c_{\mu} p_{\mu}(\underline{y}_v)
\end{equation}
for some $c_\mu \in \mathbb{Z}$, where $\partial \lambda$ denotes the set of partitions coming from $\lambda$ with at least two of its elements merged and $p_{\eta}=p_{\eta_1}\dots p_{\eta_l}$ for $\eta=\left( \eta_1,\dots,\eta_l \right)$.

If we define $\tilde{M}_\lambda(v)$ to be the 
$y_t \mapsto t$ substitute of the right-hand side of \eqref{eq_augmonsymm_inpowersumbasis},
then e.g. Faulhaber's formula for
\begin{equation*}
p_q(\underline{y}_v)|_{y_t \mapsto t}=\sum_{k=0}^v k^q
\end{equation*}
immediately implies that
$\tilde{M}_\lambda(v) \in \Q[v]$.

The leading terms of the $\tilde{M}_\lambda(v)$'s can also be obtained from \eqref{eq_augmonsymm_inpowersumbasis}: as elements of $\partial \lambda$ have length smaller than that of the partition $\lambda$, we can deduce from Faulhaber's formula that the leading term of $\tilde{M}_\lambda(v)$
is the leading term of the $\lambda$-summand, that is \eqref{eq_leadingtermMtuple}. 

For weak partitions $\lambda=\left( 0^{m_0},1^{m_1},\dots,n^{m_n} \right)$ we can use \eqref{eq_augmonsymmpolforweak_vsfornozeropart} to show that $\tilde{M}_\lambda(v)$
is also in $\Q[v]$ with leading term as in \eqref{eq_leadingtermMtuple}.

Note that the polynomial $\tilde{M}_\lambda(v)$ will work for every $v \ge -1$:
  $\tilde{M}_\lambda(-1)=0$ for every weak partition $\lambda$ as the Faulhaber's formulas
  (or the binomial coefficient if $\lambda^*$ is empty) are divisible by $v+1$.
\end{proof}

\begin{remark}\label{rmrk_Mlambdaworksfornonnegative}
  For every weak partition $\lambda$, by definition, $\tilde{m}_\lambda(\underline{y})=0$
  for every $0 \le v < l(\lambda)-1$. For proper partitions
  $\tilde{m}_\lambda(\underline{y}_{l(\lambda)-1})=0$ also holds.
  This means that the arithmetic specialization $\tilde{M}_{\lambda}(v) \in \Q[v]$ is divisible by
\[ (v+1)v\dots(v-(l(\lambda)-2)) \quad \text{ or } \quad  (v+1)v\dots(v-(l(\lambda)-1)) \text{ if } \lambda \text{ is not weak.} \]
For example, 
\[ \tilde{M}_{(2,0,0)}(v)=\frac{1}{6}(v+1)v^2(v-1)(2v+1). \]
\end{remark}

Lemma \ref{lemma_tildeMlambdaspolys} explains the hypothesis $K(d_0,\dots,d_r) \ge -1$ in Theorem
\ref{thrm_polynomialityofcoeffs_generalsetup}: for such $(d_0,\dots,d_r)$'s the
function $\tilde{M}_\lambda(K(d_0,\dots,d_r))$ is described by a polynomial in
$\Q[d_0,\dots,d_r]$.

By \eqref{eq_Stirlingcoeffs_inMtildas}, 
this finishes the proof of Theorem \ref{thrm_polynomialityofcoeffs_generalsetup}.

\subsection{Asymptotic behaviour of Stirling coefficients} \label{sec:asymp-stirling}
The coefficients of $c\big(\!\Pol^d(\C^n)\big)$ are Stirling coefficients that come from rising products 
where further assumption can be made. These assumptions lead to a simple relationship between the
(total) degrees of
$P_E(d,t)$ and $S[P,K]_H(d)$ of Definition \ref{def_risingprod_Stirlingcoef}.
The a priori upper bound for the $d$-degree of $S[P,K]_H(d)$ we can get this way is really useful,
because it allows us to use interpolation to calculate them.

\begin{proposition}\label{prop_stepLW}
  Suppose that the formal power series
  \[ P(d_0,\dots,d_r,t,x)=\sum_{E} P_E(d_0,\dots,d_r,t) x^E \in \mathbb{Q}[d_0,\dots,d_r,t][[x]] \]
  satisfies $P(d_0,\dots,d_r,t,0)=1$, where $x=(x_1,x_2,\dots)$ and let
  $L(d_0,\dots,d_r) \in \Z[d_0,\dots,d_r]$ be a linear polynomial.
  Recall that the corresponding rising product is defined as
  \begin{equation*}
    S[P,L](d_0,\dots,d_r,x):=\prod_{t=0}^{L(d_0,\dots,d_r)} P(d_0,\dots,d_r,t,x)=
    \sum_{H} S[P,L]_H(d_0,\dots,d_r) x^H,
  \end{equation*}
  and that the Stirling coefficients $S[P,L]_H(d_0,\dots,d_r)$ are in $\Q[d_0,\dots,d_r]$.

  Assume that there is a linear form $W: \mathbb{Z}^\infty \to \mathbb{Z}$ such that for every
  $E \in \mathbb{N}^\infty$, $\deg(P_E(d,t)) \leq W(E)$. Then
  \begin{enumerate}[label=\roman*)]
  \item \label{item_propstep1} $\deg(S[P,L]_H(d)) \leq W(H)+|H|$,
  \item \label{item_propstep2} if $\deg(S[P,L]_H(d))$ reaches $W(H)+|H|$
    (in which case the upper bound $\deg(P_E(d,t)) \leq W(E)$ is sharp
      \footnote{
	More precisely,	\begin{equation*}
	  \begin{gathered}
	    \deg(S[P,L]_H(d))=W(H)+|H| \\
	    \text{ for every } H
	  \end{gathered}
	  \Longleftrightarrow
	  \begin{gathered}
	    \deg(P_{1_i}(d,t))=W(1_i) \text{ and } \\
	    \sum_{\substack{m: \\ \deg(P_{1_i,m}(d))=W(1_i)-m}}
	    \left( \frac{1}{1+m}
	    \coef\left(d^{W(1_i)-m}, P_{1_i,m}(d)\right) \right) \neq 0.
	  \end{gathered}
	\end{equation*}
	In this case, the leading coefficient of $S[P,L]_H(d)$ can also be expressed, see
	Proposition \ref{prop_LWleadingcoeff}.
    }),
    then the leading term of $S[P,L]_H(d)$ comes from the
    \[ \prod_{t=0}^{L(d_0,\dots,d_r)}
  \left( 1+\sum_{i \geq 1} P_{1_i}(d_0,\dots,d_r,t) x_i\right) \]
  summand of $S[P,L](d,x)$.
\end{enumerate}
\end{proposition}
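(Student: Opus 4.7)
The plan is to apply the explicit Stirling coefficient formula of Theorem \ref{prop:rising} with $K = L$ and estimate the $d$-degree of each summand separately. Writing
\[ S[P,L]_H(d) = \sum_{J \vdash H} \frac{1}{\mult(J)!} \sum_{\lambda \in \N^{l(J)}} \left( \prod_{s=1}^{l(J)} P_{J_s, \lambda_s}(d) \right) \tilde{M}_\lambda(L(d)), \]
I would bound the two factors $\prod_s P_{J_s,\lambda_s}(d)$ and $\tilde{M}_\lambda(L(d))$ separately.

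For the product of coefficients: the hypothesis $\deg P_E(d,t) \le W(E)$, read as a bound on the total degree in $(d,t)$, says that if $P_E(d,t) = \sum_m P_{E,m}(d) t^m$, then $\deg_d P_{E,m}(d) \le W(E) - m$. Summing over the parts of $J$ and invoking linearity of $W$ together with $\sum_s J_s = H$, this yields
\[ \deg_d \prod_{s=1}^{l(J)} P_{J_s, \lambda_s}(d) \le \sum_s \big(W(J_s) - \lambda_s\big) = W(H) - |\lambda|. \]
For the second factor, Lemma \ref{lemma_tildeMlambdaspolys} together with \eqref{eq_leadingtermMtuple} gives that $\tilde{M}_\lambda$ has $v$-degree $|\lambda| + l(J)$ (regarding $\lambda \in \N^{l(J)}$ as a weak partition of length $l(J)$, where zero entries still contribute a factor $v$ in the leading term). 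Since $L$ is linear in $d$, one obtains $\deg_d \tilde{M}_\lambda(L(d)) \le |\lambda| + l(J)$. Adding these two bounds, the $(J,\lambda)$-term has $d$-degree at most $W(H) + l(J) \le W(H) + |H|$, the last inequality because any vector partition $J \vdash H$ into nonzero parts satisfies $l(J) \le |H|$. This establishes \ref{item_propstep1}.

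For \ref{item_propstep2}, the key observation is that equality $l(J) = |H|$ forces each part $J_s$ to satisfy $|J_s| = 1$, i.e.\ $J_s = 1_{i(s)}$ for some coordinate $i(s)$; thus it is achieved precisely by the distinguished partition $J^* := (1_1^{H_1}, \ldots, 1_n^{H_n})$. Every term with $J \ne J^*$ therefore has $d$-degree at most $W(H) + l(J) < W(H) + |H|$, so only the $J^*$-subsum can produce a leading term of maximal degree. It remains to recognize this subsum: since $J^*$ only involves parts of the form $1_i$, the coefficients entering are exactly the $P_{1_i, m}(d)$, i.e.\ those of the linear-in-$x$ part of $P$. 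Applying Theorem \ref{prop:rising} in the reverse direction, this subsum is precisely the coefficient of $x^H$ in $\prod_{t=0}^{L(d)} \bigl(1 + \sum_i P_{1_i}(d,t) x_i\bigr)$, proving the claim.

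The main subtlety is keeping track of the two distinct notions of length attached to $\lambda$: although $\lambda$ as a weak partition may have fewer nonzero entries, it is the full vector length $l(J)$ that enters the $v$-degree of $\tilde{M}_\lambda$ via \eqref{eq_leadingtermMtuple}. Pairing this with the elementary inequality $l(J) \le |H|$ for vector partitions into nonzero parts—saturated uniquely by $J^*$—is what localizes the top-degree contribution to the linear-in-$x$ truncation of the rising product.
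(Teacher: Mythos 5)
Your proposal is correct and follows essentially the same route as the paper: both expand $S[P,L]_H(d)$ via Theorem \ref{prop:rising}, bound $\deg_d\prod_s P_{J_s,\lambda_s}(d)$ by $W(H)-|\lambda|$ and $\deg_d\tilde{M}_\lambda(L(d))$ by $|\lambda|+l(J)$ using \eqref{eq_leadingtermMtuple} and the linearity of $L$, and then observe that $l(J)\le|H|$ is saturated only by the partition $(1_1^{H_1},\dots,1_n^{H_n})$, which isolates the linear-in-$x$ truncation. Your explicit identification of the $J^*$-subsum with the coefficient of $x^H$ in $\prod_t(1+\sum_i P_{1_i}(d,t)x_i)$ is a slightly more spelled-out version of the paper's concluding step, but the argument is the same.
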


\begin{proof}
  By \eqref{eq_Stirlingcoeffs_inMtildas},
  \begin{equation*}
  S[P,L]_H(d_0,\dots,d_r)=
  \sum_{J\vdash H}
  \frac{1}{\mult(J)!}
  \sum_{\lambda \in \bigtimes_{s=1}^{l(J)} I_P(J_s)}
  \prod_{s=1}^{l(J)} \left( P_{J_s,\lambda_s}(d) \right)
  \tilde{M}_\lambda(L(d)),
\end{equation*}
therefore
\[ \deg(S[P,L]_H(d)) \leq
  \max_{J \vdash H}
  \max_{\lambda \in \bigtimes_{s=1}^{l(J)} I_P(J_s)}
  \deg\left( \prod_{s=1}^{l(J)} \left( P_{J_s,\lambda_s}(d) \right)
\tilde{M}_\lambda(L(d)) \right). \]

For each $\lambda_s \in I_P(J_s) $, by the hypothesis of the proposition,
$\deg\left( P_{J_s,\lambda_s}(d) \right) \leq W(J_s)-\lambda_s$.
Also, the linearity of $L(d_0,\dots,d_r)$ means that the degree part of
\eqref{eq_leadingtermMtuple} holds for $\tilde{M}_\lambda(L(d))$.
This gives that
\begin{multline}\label{eq_deglambdacontrib_estimate}
  \deg\left( \prod_{s=1}^{l(J)} \left( P_{J_s,\lambda_s}(d) \right)
  \tilde{M}_\lambda(L(d)) \right)=
  \sum_{s=1}^{l(J)} \left( \deg\left( P_{J_s,\lambda_s}(d) \right) \right)
  +|\lambda|+l(\lambda) \leq
  \\
  \sum_{s=1}^{l(J)}
  W(J_s)-\lambda_s+1+\lambda_s=W(H)+l(J) \leq W(H)+|H|,
\end{multline}
showing \ref{item_propstep1}.

$H=\left( H_1,\dots,H_n \right)$ has a single partition
\[ \left( 1_1^{H_1},\dots,1_{n}^{H_n} \right) \vdash H \]
that has length $|H|$ and where \eqref{eq_deglambdacontrib_estimate}
reach $|H|+W(H)$.
This proves \ref{item_propstep2}.
\end{proof}
\bigskip

\section{The $d$-dependence of \texorpdfstring{$c\big(\!\Pol^d(\C^n)\big)$}{c(Pold(Cn))}.}\label{sec_cpold}
  In this section we will apply the results of Section \ref{sec_Stirlingcoefs} to
  \begin{equation} \label{non-rising}
  c\big(\!\Pol^d(\C^n)\big)= \prod_{\substack{(d_1,\dots,d_n) \in \N^n \\ d_1+\dots+d_n=d}}
  \left( 1+d_1 x_1+\dots+d_n x_n \right), 
\end{equation}
  the Chern class of the $\GL(n)$-representation $\Pol^d(\C^n)$.
  This is not a rising product per se (except for the $n=2$ case),
  but can be considered as an iterated rising product as in \eqref{eq_cpoldcn_asseriesofproducts}.
  
  We will think of $n$ as a fixed parameter, and regard the class $c\big(\!\Pol^d(\C^n)\big)$ as
  a symmetric polynomial
  in the variables $x_1,\dots,x_n$.
  Our goal is to investigate the $d$-dependence of $c\big(\!\Pol^d(\C^n)\big)$.
  Using Theorem \ref{thrm_polynomialityofcoeffs_generalsetup}, we will show that
  \begin{theorem}\label{thrm_cpoldCnispoly}
    For every positive integer $n$ 
    \[ c\big(\!\Pol^d(\C^n)\big) \in \Q[d]\left[ \left[ x_1,\dots,x_n \right] \right]^{S_n}. \]
  \end{theorem}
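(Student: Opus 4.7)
The plan is to realize $c\big(\!\Pol^d(\C^n)\big)$ as an $(n-1)$-fold iterated rising product by ``peeling off'' the indices $d_n, d_{n-1},\dots,d_2$ one at a time, and then to apply Theorem \ref{thrm_polynomialityofcoeffs_generalsetup} once for each peeled index. I would solve the single linear constraint $d_1+\cdots+d_n=d$ for $d_1$ and rewrite
\[ c\big(\!\Pol^d(\C^n)\big)=\prod_{d_n=0}^{d}\;\prod_{d_{n-1}=0}^{d-d_n}\cdots \prod_{d_2=0}^{d-d_n-\cdots-d_3} P(d,d_2,\dots,d_n,x), \]
where
\[ P(d,d_2,\dots,d_n,x)=1+(d-d_2-\cdots-d_n)x_1+d_2x_2+\cdots+d_n x_n \]
is a polynomial in $d,d_2,\dots,d_n$ satisfying $P(d,d_2,\dots,d_n,0)=1$.

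The key step is to feed the innermost product into Theorem \ref{thrm_polynomialityofcoeffs_generalsetup} first: I take $d_2$ in the role of the rising variable $t$, the tuple $(d,d_3,\dots,d_n)$ in the role of the parameters $(d_0,\dots,d_r)$, and $K(d,d_3,\dots,d_n)=d-d_3-\cdots-d_n$ as the (linear, hence integer-valued) range. The theorem then tells me that the coefficients in $x$ of the innermost product lie in $\Q[d,d_3,\dots,d_n]$ and that the $x=0$ value is still $1$. This is exactly the form of input needed for a second application of the theorem, now with $d_3$ in the role of $t$, and so on. After $n-1$ iterations I am left with a formal power series in $x$ whose coefficients lie in $\Q[d]$, which is the desired conclusion. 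The $S_n$-invariance is then immediate from the manifest symmetry of the original product \eqref{non-rising} under simultaneous permutations of $(d_1,\dots,d_n)$ and $(x_1,\dots,x_n)$.

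The hard part---to the extent that there is one---will be the bookkeeping: at each of the $n-1$ stages I need to verify that the hypotheses of Theorem \ref{thrm_polynomialityofcoeffs_generalsetup} really hold, namely that the intermediate power series in $x$ has coefficients polynomial in the remaining $d$-parameters, that its $x=0$ value is $1$, and that the linear range $K$ is $\ge -1$. The first of these is handed to me by the previous application of the theorem, the second is preserved inductively (a product of formal power series each with constant term $1$ still has constant term $1$), and every $K$ that appears is $\ge 0$ whenever $d_{k+1}+\cdots+d_n\le d$, which covers all the non-negative integer values of interest. So there is no substantive obstacle beyond organizing the iteration and invoking Theorem \ref{thrm_polynomialityofcoeffs_generalsetup} repeatedly; the essential content already lives in that theorem.
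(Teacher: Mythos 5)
Your proposal is correct and follows essentially the same route as the paper's proof of Proposition \ref{prop_cPoldCn_monomsymm}: rewrite \eqref{non-rising} as an $(n-1)$-fold iterated rising product over the lattice points of the simplex and apply Theorem \ref{thrm_polynomialityofcoeffs_generalsetup} once per nesting level, from the innermost product outward, checking at each stage that the range $K\ge -1$. The only difference is cosmetic --- you eliminate $d_1$ and nest over $d_n,\dots,d_2$, while the paper eliminates $d_n$ and nests over $d_1,\dots,d_{n-1}$ --- which changes nothing by symmetry.
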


  Theorem \ref{thrm_cpoldCnispoly}, as well as the other statements of this section, has various
  equivalent forms depending  on which basis we use for the ring of symmetric formal power
  series  $\Q\left[ \left[ x_1,\dots,x_n \right] \right]^{S_n}$.
  Among the several well-known bases we will consider those of the monomial symmetric polynomials
  $m_{\mu}(x_1,\dots,x_n)$, the
  Schur polynomials $s_\lambda(x_1,\dots,x_n)$ and the elementary symmetric polynomials
  $e_\nu(x_1,\dots,x_n)=\prod e_{\nu_i}(x_1,\dots,x_n)$.
  For example,
  Theorem \ref{thrm_cpoldCnispoly} can be expressed as 
  \begin{equation*}\label{eq_cpold_elemsymm_polynomial}
   c\big(\!\Pol^d(\C^n)\big) \in  \Q[d]\left[\left[e_1,\dots,e_n\right]\right]. 
  \end{equation*}
  Note that elements of the first two bases are indexed by partitions $\mu,\lambda$ of length
  at most $n$, while nonzero elementary symmetric polynomials are indexed by partitions $\nu$ 
  with their coordinates at most $n$.

  Writing $c_k(\Pol^d(\C^n))$ for the $k$-th Chern class of $\Pol^d(\C^n)$, i.e. the 
  homogeneous component of $\left\{ x_1,\dots,x_n \right\}$-degree $k$ of
  $c(\Pol^{d}(\mathbb{C}^n))=\sum_{k=0}^{d+1}c_k(\Pol^{d}(\mathbb{C}^n))$, Theorem
  \ref{thrm_cpoldCnispoly} can be reformulated as having
  \[  c_k(\Pol^{d}(\mathbb{C}^n)) \in \Q[d][x_1,\dots,x_n]^{S_n} \quad \text{ for every } k \in
  \N. \]
  
  \medskip 

  Following the proof of the polynomiality of the coefficients of $c(\Pol^{d}(\mathbb{C}^n))$,
  we turn to their asymptotic analysis. The resulting theorem can be stated the
  shortest as
  \begin{theorem}\label{thrm_leadingtermcPoldCn} The leading term of $c_k(\Pol^{d}(\mathbb{C}^n))\in\Q[e_1,\dots,e_n][d]$ is
    \[ \frac{1}{k!} \left( \frac{1}{n!} \right)^k e_1^k d^{nk}. \]
  \end{theorem}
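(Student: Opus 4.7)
The plan is to view $c_k(\Pol^d(\C^n))$ as the $k$-th elementary symmetric polynomial in a family of linear forms and then extract the top power of $d$ via Newton's identity. Set
\[ T_{n,d} := \{ \mathbf{d} = (d_1,\dots,d_n) \in \N^n : d_1 + \cdots + d_n = d \}, \qquad L_{\mathbf{d}}(x) := d_1 x_1 + \cdots + d_n x_n; \]
then
\[ c_k(\Pol^d(\C^n)) \;=\; e_k\!\left( L_{\mathbf{d}}(x) : \mathbf{d} \in T_{n,d} \right), \]
a symmetric function evaluated on $|T_{n,d}| = \binom{d+n-1}{n-1}$ linear forms, with associated power sums $p_j := \sum_{\mathbf{d} \in T_{n,d}} L_{\mathbf{d}}(x)^j$.

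First I would control $\deg_d p_j$. Expanding $L_{\mathbf{d}}^j$, the $x$-coefficients are sums of the form $\sum_{\mathbf{d} \in T_{n,d}} d_1^{a_1}\cdots d_n^{a_n}$ with $a_1+\cdots+a_n = j$; by iterated application of Faulhaber's formula---the same mechanism used in Lemma~\ref{lemma_tildeMlambdaspolys}---each such sum is a polynomial in $d$ of degree $j + n - 1$. The case $j=1$ is pivotal: permutation symmetry of $T_{n,d}$ in the coordinates of $\mathbf{d}$ forces $\sum_{\mathbf{d}} d_i = \tfrac{d}{n}\binom{d+n-1}{n-1}$ independent of $i$, whence
\[ p_1 \;=\; \frac{d}{n}\binom{d+n-1}{n-1}\, e_1 \;=\; \frac{d^n}{n!}\, e_1 + (\text{lower order in } d), \]
and crucially the leading part of $p_1$ is a scalar multiple of $e_1$ alone.

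Next I would run Newton's identity $k\, c_k = \sum_{i=1}^{k} (-1)^{i-1}\, c_{k-i}\, p_i$ inductively in $k$. Assuming the leading term of $c_{k-i}$ is $\tfrac{1}{(k-i)!\,(n!)^{k-i}}\, e_1^{k-i}\, d^{n(k-i)}$, the degree bound $\deg_d p_i \le i + n - 1$ gives
\[ \deg_d(c_{k-i}\, p_i) \;\le\; n(k-i) + (i + n - 1) \;=\; nk - (n-1)(i-1). \]
For $n \ge 2$ and $i \ge 2$ this is strictly less than $nk$, so only the $i = 1$ term contributes at top degree. Substituting the leading terms of $c_{k-1}$ and $p_1$ yields
\[ c_k \;=\; \tfrac{1}{k}\, c_{k-1}\, p_1 + O\!\left(d^{nk-(n-1)}\right) \;=\; \tfrac{1}{k!\,(n!)^k}\, e_1^k\, d^{nk} + (\text{lower order in } d), \]
establishing the claim.

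The main obstacle is certifying that the leading coefficient really is the pure monomial $e_1^k$ rather than some other degree-$k$ element of $\Q[e_1,\dots,e_n]$. This is secured by the fact that $p_1 \in \Q[d]\cdot e_1$, so $p_1^k \in \Q[d]\cdot e_1^k$ and the inductive step preserves the shape (polynomial in $d$) times $e_1^k$. A secondary technical point is the Faulhaber estimate $\deg_d p_j = j + n - 1$, which mirrors \eqref{eq_leadingtermMtuple}. Alternatively, one could sidestep Newton's identity and derive both the bound $\deg_d c_k \le nk$ and the localization of the top term to the linear-in-$x$ part directly by applying an iterated version of Proposition~\ref{prop_stepLW} to the nested rising-product factorization of $c(\Pol^d(\C^n))$ alluded to at the beginning of this section.
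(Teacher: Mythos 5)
Your proof is correct, but it takes a genuinely different route from the paper. The paper first proves the equivalent statement in the monomial symmetric basis (Proposition \ref{prop_cPoldCn_monomsymm}): it writes $c(\Pol^d(\C^n))$ as the $(n-1)$-fold iterated rising product \eqref{eq_cpoldcn_asseriesofproducts}, applies Proposition \ref{prop_stepLW} repeatedly to get $\deg a_H(d)\le n|H|$ and to localize the top-degree contribution to the linear-in-$x$ part, and then pins down the leading coefficient by an explicit computation combining Faulhaber's leading term with the identity \eqref{eq_1overfactorialexpansion}; the $e$-basis statement then follows from the multinomial expansion of $e_1^k$. You instead treat $c_k$ as $e_k$ of the Chern roots $L_{\mathbf d}$ and run Newton's identity $k\,c_k=\sum_{i=1}^k(-1)^{i-1}c_{k-i}p_i$: the two inputs are the bound $\deg_d p_i\le i+n-1$ (iterated Faulhaber over the dilated simplex, the same mechanism as in Lemma \ref{lemma_tildeMlambdaspolys}) and the exact identity $p_1=\binom{d+n-1}{n}e_1$, which forces the top coefficient to lie in $\Q\cdot e_1^k$; the inequality $n(k-i)+(i+n-1)<nk$ for $i\ge 2$, $n\ge 2$ kills all other terms at top degree. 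Both arguments are sound (and both implicitly require $n\ge 2$, as they must, since the statement degenerates for $n=1$). Your route is shorter and more self-contained for this particular theorem, and as a bonus it reproves the polynomiality of $c_k$ in $d$ by the same induction and recovers Proposition \ref{prop_cPoldCn_monomsymm} and Corollary \ref{cor_cPoldCn_Schurcoeffs} by expanding $e_1^k$; what it does not provide is the general rising-product formalism that the paper also needs elsewhere (e.g.\ for Theorem \ref{thrm_23_345thrm} and the closed formulas in Section \ref{sec:cpoldc2-rising}). The one point worth spelling out if you write this up is that the inductive hypothesis must carry the degree bound $\deg_d c_{k-i}\le n(k-i)$ alongside the identification of the leading term, since that bound is what makes the $i\ge 2$ terms negligible.
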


  This theorem too has different equivalent forms depending on the chosen basis.
  For the enumerative applications in Section  \ref{sec:enum} we need the Schur polynomial basis,
 so let us highlight the corresponding versions of Theorem \ref{thrm_cpoldCnispoly} and 
 Theorem \ref{thrm_leadingtermcPoldCn}.
 \begin{corollary}\label{cor_cPoldCn_Schurcoeffs}
   For every partition $\lambda=\left(\lambda_1,\dots,\lambda_l  \right)$ the coefficients
   $b_\lambda(d)$ in
   \[ c\big(\!\Pol^d(\C^n)\big)=\sum_{\lambda} b_\lambda(d) s_\lambda(x_1,\dots,x_n) \]
   form a polynomial $b_\lambda(d) \in \Q[d]$ whose leading term is
   
   \[ \frac{\prod\limits_{1 \leq i < j \leq l} \left( \lambda_i - \lambda_j +j-i\right)}
     {(\lambda_1+l-1)! (\lambda_2+l-2)!\dots\lambda_l!}
     \left( \frac{1}{n!} \right)^{|\lambda|} d^{n|\lambda|}.
   \]
 \end{corollary}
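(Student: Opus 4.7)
The plan is to deduce the corollary from Theorems \ref{thrm_cpoldCnispoly} and \ref{thrm_leadingtermcPoldCn} by performing the change of basis from the elementary symmetric polynomial basis to the Schur basis. The polynomiality of $b_\lambda(d)$ is almost immediate: by Theorem \ref{thrm_cpoldCnispoly} we have $c\big(\!\Pol^d(\C^n)\big) \in \Q[d][[x_1,\dots,x_n]]^{S_n}$, and the transition matrix between any two standard bases of the ring of symmetric polynomials (such as $e_\nu$ and $s_\lambda$) has entries in $\Q$ (in fact in $\Z$) that are independent of $d$. Hence expanding in the Schur basis produces coefficients $b_\lambda(d) \in \Q[d]$.

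For the leading term, I would first observe that $s_\lambda$ is homogeneous of degree $|\lambda|$ in $x_1,\dots,x_n$, so the Schur expansion of $c_k\big(\!\Pol^d(\C^n)\big)$ involves only the $s_\lambda$ with $|\lambda|=k$ (and $l(\lambda)\leq n$). Consequently $b_\lambda(d)$ is determined entirely by $c_{|\lambda|}\big(\!\Pol^d(\C^n)\big)$. By Theorem \ref{thrm_leadingtermcPoldCn}, the top-$d$-degree part of $c_k\big(\!\Pol^d(\C^n)\big)$ is
\[
\frac{1}{k!}\left(\frac{1}{n!}\right)^{k} e_1^{k}\, d^{nk},
\]
so the leading term of $b_\lambda(d)$ is exactly $\frac{1}{k!}\left(\tfrac{1}{n!}\right)^{k}$ times the coefficient of $s_\lambda$ in $e_1^{k}$, multiplied by $d^{nk}$.

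The remaining task is to compute $[s_\lambda]\, e_1^{k}$. Since $e_1=p_1=s_{(1)}$, the expansion
\[
p_{1}^{k}=\sum_{\lambda\vdash k} f^{\lambda}\, s_\lambda
\]
holds, where $f^{\lambda}=\chi^{\lambda}(1^{k})=\dim V_\lambda$ is the number of standard Young tableaux of shape $\lambda$. By the Frobenius (or hook-content) determinantal formula,
\[
f^{\lambda}
=\frac{k!\,\prod_{1\leq i<j\leq l}(\lambda_i-\lambda_j+j-i)}{\prod_{i=1}^{l}(\lambda_i+l-i)!}
=\frac{k!\,\prod_{1\leq i<j\leq l}(\lambda_i-\lambda_j+j-i)}{(\lambda_1+l-1)!(\lambda_2+l-2)!\cdots\lambda_l!}.
\]
Substituting this back yields exactly the claimed leading term, the factor $k!$ cancelling against $1/k!$.

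There is no real obstacle here: the work done in Theorems \ref{thrm_cpoldCnispoly} and \ref{thrm_leadingtermcPoldCn} does the heavy lifting, and the corollary is a mechanical translation between two well-understood bases. The only point to watch is that for $l(\lambda)>n$ the Schur polynomial $s_\lambda$ vanishes in $n$ variables, so the formula is read as stated only for partitions $\lambda$ with $l(\lambda)\leq n$; the hook-length/Frobenius identity is classical and introduces no new difficulties.
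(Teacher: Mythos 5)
Your proof is correct and follows essentially the same route as the paper: polynomiality is inherited through the $d$-independent change of basis, and the leading term is obtained by combining Theorem \ref{thrm_leadingtermcPoldCn} with the expansion $e_1^{k}=\sum_{\lambda\vdash k}f^{\lambda}s_\lambda$ and the Frobenius formula for $f^\lambda$, with the $k!$ cancelling. Your added remarks (that $b_\lambda$ only sees $c_{|\lambda|}$, and that $f^\lambda\neq 0$ so no degree drop occurs) are correct and only make explicit what the paper leaves implicit.
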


 We collected the proofs of the above statements in Section \ref{subsec_proofscPoldCn}.

 \medskip 

 \begin{remark} \label{rmrk_c3ofpoldc2}
   Using our a priori knowledge of the $d$-degrees of the coefficients of 
   $c(\Pol^d(\C^n))$ in e.g. the Schur polynomial basis, we can interpolate the coefficients of
   $c(\Pol^d(\C^n))$. For example, we get that
   \begin{gather*}
     c_{3}(\Pol^d(\C^2))=
     \frac{1}{48}\,{d}^{2} ( d-1 )  ( d-2 )  ( d+1
     ) ^{2}s_3 +
     \frac{1}{24}\,{d}^{2} ( d-1 )  ( d+1 )  ( {d}^{2}+d
     +2 )s_{2,1}
     \\
     \text{or}
     \\
     \begin{aligned}
     c_3(\Pol^d(\C^3)))=&
     \frac{1}{6480}\,d(d-1)(d+3)(d+2)(d+1)(5d^4+20d^3-5d^2-50d-12) s_3+
     \\
     &\frac{1}{1296}\,d(d-1)(d+3)(d+2)(d+1)(d^2+2)(2d^2+8d+3) s_{2,1}+
     \\
     &\frac{1}{1296}\,d(d+3)(d+2)(d+1)(d^2+2)(d^3+3d^2+2d+12) s_{1,1,1}.
   \end{aligned}
 \end{gather*}
 \end{remark}

 \begin{remark}\label{rmrk_polynomialityfrom0}
  The proof of Theorem \ref{thrm_cpoldCnispoly} shows that the polynomials
  $c_k(\Pol^d(\C^n)) \in \Q[d][x_1,\dots,x_n]^{S_n}$ work
  for every $d \ge -1$
  \footnote{By definition, $c(\Pol^{-1}(\C^n))=1$.}.
  This means that we can
  use the data points for $d=-1,0,\dots$ when interpolating the coefficients of $c(\Pol^d(\C^n))$.

  For $d > 0$ the $\{x_1,\dots,x_n\}$-degree of $c\big(\!\Pol^d(\C^n)\big)$ is
  $\binom{d+n-1}{n-1}$, therefore we also have that for $k \neq 0$
  \[ c_k(\Pol^d(\C^n))=0 \text{ for every } d \ge -1 \text{ such that } \binom{d+n-1}{n-1}<k. \]
  In other words, $c_k(\Pol^d(\C^n)) \in \Q[d][x_1,\dots,x_n]^{S_n}$ is divisible by
  \[ (d+1) d (d-1)\dots(d-(d_0(k)-1)), \]
  where $d_0(k) \in \N$ is the smallest integer such that $\binom{d_0(k)+n-1}{n-1} \ge k$.
  This explains that 
  \[ (d+1)d(d-1) \mid c_3(\Pol^d(\C^2)) \quad  \text{ and } \quad (d+1)d \mid c_3(\Pol^d(\C^3)) \]
  in the examples of Remark \ref{rmrk_c3ofpoldc2}.

  As for $d=0$ the $\{x_1,\dots,x_n\}$-degree of $c(\Pol^0(\C^n))$ is $0$, for $k=1$ we
  furthermore get that
  $c_1(\Pol^d(\C^n)) \in \Q[d][x_1,\dots,x_n]^{S_n}$ is divisible by
  $(d+1) d$.
\end{remark}

The comparison of Theorem \ref{thrm_leadingtermcPoldCn} and Corollary
\ref{cor_cPoldCn_Schurcoeffs} shows that
the elementary symmetric polynomial basis differ from the  Schur polynomial
and the monomial symmetric polynomial (see Proposition \ref{prop_cPoldCn_monomsymm}) basis
in that the coefficients of $c_k(\Pol^d(\C^n))$ in the $\{e_\nu: \nu \vdash k \}$ basis
have a non-uniform degree distribution.

For example, the coefficients of $e_1^4$, $e_1^2 e_2$ and $e_2^2$ in
\begin{multline*}
c_4(\Pol^d(\C^2))=(d+1)d(d-1)(d-2)
\Bigg(\frac{(d-3)(15d^3+15d^2-10d-8)}{5760} e_1^4 +
\\
\frac{(d+2)(15d^2-5d-12)}{720}  e_1^2 e_2 + 
\frac{(d+2)(5d+12)}{360}e_2^2 \Bigg)
\end{multline*}
have degrees $8$, $7$ and $6$ respectively.

In Section \ref{subsec_23_345} we provide an upper bound for the degrees of such coefficients.

\bigskip

\subsection{\texorpdfstring{$c\big(\!\Pol^d(\C^2)\big)$}{c(Pold(C2))} as a rising product} \label{sec:cpoldc2-rising}
  In the $n=2$ case $c(\Pol^d(\C^2))=\prod_{t=0}^{d}(1+tx_1+(d-t)x_2)$ is a proper rising
  product, therefore Proposition \ref{prop:rising} provides a reasonable expression for its
  coefficients.
  To illustrate this let us calculate some coefficients $g_{\nu}(d) \in \Q[d]$ in the elementary
  symmetric polynomial basis,
  \[ c(\Pol^d(\C^2))=\sum_{\nu} g_{\nu}(d) e_{\nu}(x_1,x_2). \]

  We will restrict our attention to odd $d$'s, and investigate 
  $c\!\left( \Pol^{2\delta+1}(\C^2)\right)$ for $\delta \in \N$.
  This has the advantage that every term $1+tx_1+(2\delta+1-t)x_2$ of
  $c\!\left( \Pol^{2\delta+1}(\C^2)\right)$ has its ``opposite'' term $1+(2\delta+1-t)x_1+tx_2$,
  and their product---symmetric in $x_1,x_2$---can be written in elementary
  symmetric polynomials $e_i(x_1,x_2)$ as
  \begin{multline*}
    \left( 1+tx_1+(2\delta+1-t)x_2 \right)\left( 1+(2\delta+1-t)x_1+tx_2 \right)=\\
    \underbrace{1+(1+2\delta)e_1+((1+2\delta)t-t^2)e_1^2+
    ((1+4\delta+4\delta^2)+(-4-8\delta)t+4t^2)e_2}_
    {P(\delta,t,e_1,e_2) \in \Q[\delta,t][e_1,e_2]} .
  \end{multline*}
  Then 
  \[ c\!\left( \Pol^{2\delta+1}(\C^2)\right)=\prod_{t=0}^\delta P(\delta,t,e_1,e_2)=
  \sum_H S[P]_H(\delta) e^H=S[P](\delta,e) \]
  is a rising product in the elementary symmetric polynomials $e_1,e_2$, so
  we can use Proposition \ref{prop:rising} to express its Stirling coefficients 
  $S[P]_H(\delta) \in \Q[\delta]$.

  Since 
  $g_{\left(1^{H_1},2^{H_2}\right)}(d)$ and $S[P]_H(\delta)$ are both polynomials,
  \begin{equation}\label{eq_coldc2coeffsvsoddcoeffs}
    g_{\left(1^{H_1},2^{H_2}\right)}(d)=S[P]_H\left(\frac{d-1}{2}\right) \in \Q[d], 
  \end{equation}
  and we get a formula for the coefficients  $g_{\left(1^{H_1},2^{H_2}\right)}(d)$.

  For any exponent vector $H=(H_1,H_2)$ the set of its vector partitions is
  \[ \left\{ J \vdash \left( H_1,H_2 \right) \right\}=
    \left\{ \! \left( (2,0)^j,(1,0)^{H_1-2j},(0,1)^{H_2} \right) : j=0,\dots,
  \left\lfloor \frac{H_1}{2} \right\rfloor \right\}, \]
  therefore \eqref{eq_Stirlingcoeffs_inMtildas} of Proposition \ref{prop:rising} can be written
  as
  \begin{theorem}\label{thm:cPoldC2_elemsymmcoeff} For $c\!\left( \Pol^{2\delta+1}(\C^2)\right)=
  	\sum_H S[P]_H(\delta) e^H$ the coefficients are
  
  \begin{equation*}
    S[P]_H(\delta)=
    \sum_{j=0}^{\lfloor H_1/2 \rfloor} \frac{1}{j! (H_1-2j)! H_2!} 
    \sum_{\lambda \in  \Lambda_j }
    \prod_{s=1}^{j} P_{(2,0),\lambda_s}(\delta)  \!\!
    \prod_{s=j+1}^{H_1-j}  P_{(1,0),\lambda_s}(\delta)  \!\!\!
    \prod_{s=H_1-j+1}^{H_1+H_2-j} \!\!\!\! P_{(0,1),\lambda_s}(\delta) 
    \tilde{M}_\lambda(\delta),
  \end{equation*}
  where $\Lambda_j$ contains the vectors $\lambda=(\lambda_1,\dots,\lambda_{H_1+H_2-j})$ with the propery that
  
  \begin{equation*}
  \lambda_s=
  \begin{cases}
  1,2 & \text{ if } s=1,\dots,j, \\
  0 & \text{ if } s=j+1,\dots,H_1-j, \\
  0,1 \text{ or } 2 & \text{ if } s=H_1-j+1,\dots,H_1-j+H_2,
  \end{cases}
  \end{equation*}
  
  since the coefficients $P_{E,m}(\delta)$ are all zero except for 
  \begin{align*}
    & P_{(1,0),0}(\delta)=2\delta+1,
    \\
    & P_{(2,0),1}(\delta)=2\delta+1, \quad P(\delta)_{(2,0),2}=-1,
    \\
    & P_{(0,1),0}(\delta)=4\delta^2+4\delta+1, \quad P(\delta)_{(0,1),1}=-8\delta-4, \quad
    P(\delta)_{(0,1),2}=4.
  \end{align*}
 \end{theorem}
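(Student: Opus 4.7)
The strategy is to apply Theorem \ref{prop:rising} directly to the rising product constructed in the paragraph immediately preceding the theorem. The key preparatory step is the pairing argument: since $2\delta+1$ is odd, the factors of
\[ c\!\left(\Pol^{2\delta+1}(\C^2)\right) = \prod_{t=0}^{2\delta+1}\!\left(1+tx_1+(2\delta+1-t)x_2\right) \]
pair up without fixed points via $t \leftrightarrow 2\delta+1-t$. Multiplying each such pair and rewriting the result in the elementary symmetric polynomials $e_1=x_1+x_2$, $e_2=x_1x_2$ produces the polynomial $P(\delta,t,e_1,e_2)$ given in the statement, so that $c\!\left(\Pol^{2\delta+1}(\C^2)\right)=\prod_{t=0}^{\delta} P(\delta,t,e_1,e_2)=S[P](\delta,e)$ is a genuine rising product. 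Reading off coefficients yields the listed values of $P_{E,m}(\delta)$, and in particular shows that $P_{E,m}(\delta)=0$ unless $E\in\{(1,0),(2,0),(0,1)\}$.

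Next I would classify the vector partitions $J\vdash H$ that contribute nontrivially to \eqref{eq_Stirlingcoeffs_inMtildas}. Since only the three $E$'s above give nonzero $P_E$, the sum collapses to partitions $J$ built exclusively from these three vectors. Writing $H=(H_1,H_2)$ and letting $j$ denote the number of $(2,0)$-parts, the balance $\sum J_s = H$ forces $H_1-2j$ parts equal to $(1,0)$ and $H_2$ parts equal to $(0,1)$, where $j$ ranges over $0,1,\dots,\lfloor H_1/2\rfloor$. This accounts for the outer summation index, for the multinomial factor $\mult(J)! = j!\,(H_1-2j)!\,H_2!$, and for the length $l(J)=H_1-j+H_2$ of the inner $\lambda$-vector.

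Finally, for each such $J$ I would restrict the inner $\lambda$-sum to $\lambda_s \in I_P(J_s)$: namely $\lambda_s\in\{1,2\}$ on the $j$ positions corresponding to $(2,0)$-parts, $\lambda_s=0$ on the $H_1-2j$ positions corresponding to $(1,0)$-parts, and $\lambda_s\in\{0,1,2\}$ on the $H_2$ positions corresponding to $(0,1)$-parts. Ordering the parts of $J$ in this block structure (which is legitimate because $\tilde M_\lambda(\delta)$ depends on $\lambda$ only as a weak partition) produces exactly the set $\Lambda_j$ and the asserted product of $P_{J_s,\lambda_s}(\delta)$ factors appearing in the formula. The proof is therefore a bookkeeping application of Theorem \ref{prop:rising} once the pairing is in place, so no genuine obstacle is anticipated; the only place to double-check carefully is the expansion yielding $P(\delta,t,e_1,e_2)$, which is a routine symbolic computation.
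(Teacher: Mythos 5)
Your proposal is correct and follows essentially the same route as the paper: pair the factors via $t\leftrightarrow 2\delta+1-t$ (fixed-point free since $2\delta+1$ is odd), rewrite each pair in $e_1,e_2$ to get the rising product $\prod_{t=0}^{\delta}P(\delta,t,e_1,e_2)$, and then apply Theorem \ref{prop:rising}, noting that only vector partitions built from $(2,0)$, $(1,0)$, $(0,1)$ contribute, which yields the outer sum over $j$, the factor $1/(j!\,(H_1-2j)!\,H_2!)$, and the support restriction defining $\Lambda_j$. Your explicit remark that the non-contributing vector partitions of $H$ are killed by the vanishing of the other $P_E$'s is, if anything, slightly more careful than the paper's phrasing.
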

  \medskip
  For example, for $H=(0,H_2)$, i.e., for the coefficient of $e_2^{H_2}$, Theorem 
  \ref{thm:cPoldC2_elemsymmcoeff} simplifies to 
  \begin{multline*}
  S[P]_{(0,H_2)}(\delta)=
  \frac{1}{H_2!} \sum_{\lambda \in \left\{ 0,1,2 \right\}^{H_2}}
  \prod_{s=1}^{H_2} \left( P_{(0,1),\lambda_s}(\delta) \right)
  \tilde{M}_{\lambda}(\delta)=
  \\
  \frac{1}{H_2!} \sum_{k=0}^{H_2} \sum_{l=0}^{H_2-k} \binom{H_2}{k,l,H_2-k-l}
  4^k (-8\delta-4)^l (4\delta^2+4\delta+1)^{H_2-k-l}
  \tilde{M}_{2^k,1^l,0^{H_2-k-l}}(\delta).
  \end{multline*}
  \medskip
  
  \begin{remark} \label{rmk:duan} 
    Monomial symmetric polynomials can be expressed as polynomials of elementary symmetric polynomials, therefore their arithmetic specialization can be expressed as  polynomials of Stirling numbers. There is no general formula for the coefficients of the transition matrix from  elementary symmetric to  monomial symmetric, making the calculation of general Stirling coefficients difficult. 

    However Duan \cite[Cor.4]{Duan-kostka} gave the following formula in terms of Schur polynomials:
    \begin{equation*}\label{duan-kostka}
      m_{2^l,1^k}=\sum_{t=0}^l(-1)^t\binom{k+t}{t}s_{2^{l-t},1^{k+2t}}.
    \end{equation*}
    This implies that 
    \begin{equation*}\label{mu-in-stirling}
      m_{2^l,1^k}=\sum_{t=0}^l(-1)^t\binom{k+t}{t}\big(\delta_{k+l+t}\delta_{l-t}-\delta_{k+l+t+1}\delta_{l-t-1}\big),
    \end{equation*}
    where we used the abbreviation
    $\delta_k= \delta_k(d)=\stir{d+1}{d+1-k}$.

    This way we obtain a ``closed'' formula for the coefficients of $c\!\left( \Pol^{2\delta+1}(\C^2)\right)$ in the elementary symmetric basis. This is a closed formula if we allow the use of Stirling numbers. The practical use of this formula is limited, it is easier to calculate a given coefficient by computer using interpolation.
  \end{remark}

\medskip

In Section \ref{sec_onclosedformulas} we discuss the feasibility of giving closed formulas
in general and
our motivation for the asymptotic analysis.

\subsection{Proofs of polynomiality  and leading terms of 
  \texorpdfstring{$c_k\big(\Pol^d(\mathbb{C}^n)\big)$}{ck(Pold(Cn))}}\label{subsec_proofscPoldCn}
Theorem \ref{thrm_cpoldCnispoly} and \ref{thrm_leadingtermcPoldCn} expressed in the monomial
symmetric polynomial basis fit well with the theory of rising products, therefore we start by 
proving this version, Proposition \ref{prop_cPoldCn_monomsymm}.
Using the change of basis transformations, it will be then easy to deduce the corresponding
versions in the elementary symmetric polynomial and the Schur polynomial basis.
\begin{proposition}\label{prop_cPoldCn_monomsymm}
  For every partition $\mu \vdash k$, the coefficients $a_\mu(d)$ in
  \[ c_k( \Pol^d(\mathbb{C}^n) )=\sum_{\mu \vdash k} a_\mu(d) m_\mu(x_1,\dots,x_n) \]
  are polynomials  whose leading terms are
  \[ \frac{1}{\mu!} \left( \frac{1}{n!} \right)^{|\mu|} d^{n|\mu|}. \]
\end{proposition}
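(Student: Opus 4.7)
The plan is to exhibit $c(\Pol^d(\C^n))$ as an iterated rising product and then apply the results of Section~\ref{sec_Stirlingcoefs} layer by layer. Expanding the indexing by $n$-compositions of $d$ into nested sums yields
\[
c(\Pol^d(\C^n))=\prod_{d_1=0}^{d}\prod_{d_2=0}^{d-d_1}\cdots\prod_{d_{n-1}=0}^{d-d_1-\cdots-d_{n-2}}\bigl(1+d_1x_1+\cdots+d_{n-1}x_{n-1}+(d-d_1-\cdots-d_{n-1})x_n\bigr),
\]
an $(n-1)$-fold nesting in which every layer is a rising product in a fresh variable $d_j$ with upper bound $K_j=d-d_1-\cdots-d_{j-1}$ linear (hence integer valued) in the outer parameters. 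Iterating Theorem~\ref{thrm_polynomialityofcoeffs_generalsetup} from the innermost layer outward shows that after $n-1$ steps every coefficient of $x^H$ lies in $\Q[d]$. Since $c(\Pol^d(\C^n))$ is $S_n$-symmetric, $a_\mu(d)=\coef(x_1^{\mu_1}\cdots x_n^{\mu_n},c(\Pol^d(\C^n)))\in\Q[d]$.

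For the degree bound I would iterate Proposition~\ref{prop_stepLW} with weights $W_j(E)=j|E|$: at the innermost layer every nonzero $P_{1_i}$ is affine, so $W_1(E)=|E|$ and Proposition~\ref{prop_stepLW}\ref{item_propstep1} gives a bound of $2|H|$; each subsequent layer raises the weight by $|E|$, producing $\deg a_\mu(d)\le n|\mu|$ after $n-1$ iterations, matching the claim. To extract the leading coefficient without tracking the several $\lambda$-contributions that can simultaneously attain this top degree in Theorem~\ref{prop:rising}, I would compute it directly from~\eqref{non-rising}. Writing the $x^\mu$-coefficient as
\[
a_\mu(d)=\sum_{(S_1,\dots,S_n)}\prod_{i=1}^{n}\prod_{\alpha\in S_i}\alpha_i,
\]
with the sum running over disjoint systems $(S_1,\dots,S_n)$ of compositions $\alpha\in\N^n$ of $d$ with $|S_i|=\mu_i$, and relaxing disjointness to arbitrary subsets, gives $\prod_i\sigma_{\mu_i}(\{\alpha_i\})\sim\prod_i\tfrac{1}{\mu_i!}(\sum_\alpha\alpha_i)^{\mu_i}$; the inclusion--exclusion corrections collapse two factors $\sum_\alpha\alpha_i\sim d^n$ into one factor $\sum_\alpha\alpha_i\alpha_{i'}\sim d^{n+1}$, hence lose at least $n-1$ powers of $d$. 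Using $\sum_\alpha\alpha_i=\tfrac{d}{n}\binom{d+n-1}{n-1}\sim d^n/n!$ reproduces $\tfrac{1}{\mu!}\bigl(\tfrac{1}{n!}\bigr)^{|\mu|}d^{n|\mu|}$.

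The main obstacle is reconciling the heuristic relaxation of the $(S_i)$-sum with the a priori control coming from Proposition~\ref{prop_stepLW}; the equality case Proposition~\ref{prop_stepLW}\ref{item_propstep2} makes this natural by identifying the top-degree contribution at each iteration layer with the singleton vector partition $(1_1^{H_1},\dots,1_n^{H_n})$, which corresponds precisely to the ``one lattice point per linear factor'' configuration behind the asymptotic estimate.
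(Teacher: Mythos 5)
Your proposal is correct, and its first two parts (polynomiality via the iterated rising product \eqref{eq_cpoldcn_asseriesofproducts} and Theorem \ref{thrm_polynomialityofcoeffs_generalsetup}, then the degree bound $\deg a_\mu(d)\le n|\mu|$ via $n-1$ applications of Proposition \ref{prop_stepLW}\ref{item_propstep1}) coincide with the paper's argument. Where you genuinely diverge is the leading coefficient. The paper stays inside the rising-product machinery: it uses Proposition \ref{prop_stepLW}\ref{item_propstep2} to isolate the linear-in-$x$ summand, evaluates the inner sums with Faulhaber's leading term and the identity \eqref{eq_1overfactorialexpansion}, and then runs \eqref{eq_Stirlingcoeffs_inMtildas} once more on the outer product, summing the several top-degree $\lambda$-contributions explicitly via \eqref{eq_1overfactorialexpansion} again. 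You instead go back to \eqref{non-rising} and read off $a_\mu(d)$ as a sum over disjoint systems $(S_1,\dots,S_n)$ of lattice points, relax disjointness to get $\prod_i\sigma_{\mu_i}(\{\alpha_i\})$, and control the error: since the number of lattice points is $\sim d^{n-1}$ and each term is $O(d^{|\mu|})$, forcing one coincidence costs a factor $\sim d^{n-1}$, so the correction is $O(d^{n|\mu|-(n-1)})$, negligible for $n\ge2$ (and $n=1$ is trivial); combined with $\sum_\alpha\alpha_i=\tfrac{d}{n}\binom{d+n-1}{n-1}\sim d^n/n!$ and Newton's identities this pins down the leading coefficient. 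This route is more elementary and avoids tracking the multiple $\lambda$'s that attain the top degree, at the cost of a separate error estimate that must be made precise (your "lose at least $n-1$ powers of $d$" is the right bound and does need $n\ge2$); note that polynomiality must be established first, as you do, so that the asymptotic actually determines the leading term of the polynomial. The paper's route buys uniformity -- every step is an instance of an already-proved proposition, and the same scheme generalizes to the quasi-polynomial analysis in the appendix -- while yours buys brevity and transparency for this particular coefficient.
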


\begin{proof}
  Notice first that since $c_k(\Pol^d(\mathbb{C}^n))$ is symmetric in the $x_i$ variables, the coefficients in
  \[ c_k(\Pol^d(\mathbb{C}^n)) =\sum_{H} a_H(d) x^H \]
  and in
  \[ c_k( \Pol^d(\mathbb{C}^n) )=\sum_{\mu \vdash k} a_\mu(d) m_\mu(x_1,\dots,x_n) \]
  coincide if $H=\mu$, so we can study the coefficients $a_H(d)$.

  \medskip

  We can write $c(\Pol^{d}(\mathbb{C}^n))$ as an $(n-1)$-long sequence of rising products:
  \begin{equation}\label{eq_cpoldcn_asseriesofproducts}
    \prod_{d_1=0}^{d} \dots \! \prod_{d_i=0}^{d-d_1-\dots-d_{i-1}} \! \ldots \!
    \prod_{d_{n-1}=0}^{d-d_0-\dots-d_{n-2}} \!
    \left( 1+d_1 x_1 +  \dots + d_{n-1} x_{n-1} + ( d-d_1-\dots-d_{n-1} )x_{n} \right),
  \end{equation}
  and we can use results from
  Section \ref{sec_Stirlingcoefs}
  to gather information about its Stirling coefficients.
  Step by step, we arrive at the coefficients of monomial symmetric polynomials.

  In more detail, let
  \begin{equation*}
    \begin{split}
      P^{(0)}(d,d_1,\dots,d_{n-1},x)&=\sum_{H} P^{(0)}_{H}(d,d_1,\dots,d_{n-1}) x^H     \\
      &:=1+d_1 x_1 +  \dots + d_{n-1} x_{n-1} + \left( d-d_1-\dots-d_{n-1}\right)x_{n}.
    \end{split}
  \end{equation*}
  Then for each $i=1,\dots,n-1$ let
  \begin{equation*}
    \begin{split}
      P^{(i)}(d,d_1,\dots,d_{n-i-1},x) & =\sum_{H} P^{(i)}_{H}(d,d_1,\dots,d_{n-i-1})x^H
      \\
      &:=\prod_{d_{n-i}=0}^{d-d_1-\dots-d_{n-i-1}} P^{(i-1)}(d,d_1,\dots,d_{n-i},x)
    \end{split}
  \end{equation*}
  be the rising product
  $S[P^{(i-1)},d-d_1-\dots-d_{n-i-1}](d,d_1,\dots,d_{n-i-1},x)$.
  As for every $d,d_1,\dots,d_{n-i-1}$ in \eqref{eq_cpoldcn_asseriesofproducts}
  $d-d_1-\dots-d_{n-i-1} \ge -1$, 
  we can apply Theorem \ref{thrm_polynomialityofcoeffs_generalsetup} to get 
  that the Stirling coefficients $P^{(i)}_{H}(d,d_1,\dots,d_{n-i-1})$ fit into polynomials
  in $\mathbb{Q}[d,d_1,\dots, d_{n-i-1}]$.
  In particular,
  \[ a_H(d)=P^{(n-1)}_H(d) \in \Q[d] \quad \text{ for } d \ge -1.\]

  \medskip

  Since $\deg(P^{(0)}_E)=|E|$, successive application of Proposition \ref{prop_stepLW} also shows that
  \[ \deg(P^{(i)}_H) \leq i|H|+|H|=(i+1)|H| . \]
  For $i=n-1$ we get that
  \[ \deg(a_H(d)) \le n |H|. \]
  To prove that this degree estimate is sharp, we calculate, by a series of reductions,
  that for every exponent $H=(h_1,\dots,h_n)$
  \[
    a_H(d)=\frac{1}{H!}\left( \frac{1}{n!} \right)^{|H|}d^{n|H|} +\text{ (lower degree terms)}.
  \]

  Throughout our calculations we will use the basic combinatorial identity
  \begin{equation}\label{eq_sumofsumof1sequalbinomial}
    \sum_{d_1=0}^d \sum_{d_2=0}^{d-d_1}\ldots\sum_{d_n=0}^{d-d_1-\dots-d_{n-1}} 1 =
    \binom{d+n}{n},
  \end{equation}
  as well as the fact that for every $n \geq 0, k \geq 1$
  \begin{equation}\label{eq_1overfactorialexpansion}
    \frac{1}{n!}\sum_{t=0}^{n} \binom{n}{t}(-1)^t \frac{1}{t+k} = \frac{(k-1)!}{(n+k)!}.
  \end{equation}
  The latter equation can be proved using induction on $n$.

  According to part \ref{item_propstep2} of Proposition \ref{prop_stepLW} the degree $n|H|$ part of
  $a_H(d)=P^{(n-1)}_H(d)$
  comes from the
  \begin{equation}\label{eq_calcleadingcoeff1}
    \prod_{d_1=0}^d \left( 1 + \sum_{i=1}^n P^{(n-2)}_{1_i}(d,d_1) x_i \right)
  \end{equation}
  summand of $\prod_{d_1=0}^d P^{(n-2)}(d,d_1,x)$. By symmetry reasons and
  \eqref{eq_sumofsumof1sequalbinomial}, this is equal to
  \begin{multline*}\label{eq_calcleadingcoeff2}
    \prod_{d_1=0}^d \Bigg( 1+
      \sum_{d_2=0}^{d-d_1}\ldots \sum_{d_{n-1}=0}^{d-d_1-\dots-d_{n-2}} \!\!\! \left( d_1 \right)x_1+
      \sum_{d_2=0}^{d-d_1}\ldots \sum_{d_{n-1}=0}^{d-d_1-\dots-d_{n-2}} \!\!\!  \left( d_2 \right)
    \left( x_2+\dots+x_n \right) \Bigg) =
    \\
    \prod_{d_1=0}^d \Bigg( 1+\left( \binom{d-d_1+n-2}{n-2} d_1 \right) x_1 +
    \left( \sum_{d_2=0}^{d-d_1} \binom{d-d_1-d_2+n-3}{n-3}d_2 \right) \left( x_2+\dots+x_n \right) \Bigg).
  \end{multline*}
  The highest $\left\{ d,d_1 \right\}$-degree part of the $\sum_{d_2}$ term can be calculated using
  Faulhaber's leading term formula (see also \eqref{eq_leadingtermMtuple}) and  identity
  \eqref{eq_1overfactorialexpansion}. We get that \eqref{eq_calcleadingcoeff1} is further equal to
  \begin{equation*}\label{eq_calcleadingcoeff3}
    \begin{split}
      \prod_{d_1=0}^{d} \Bigg( 1+& \left( \frac{(d-d_1)^{n-2}d_1}{(n-2)!} + \text{ (lower $\{d,d_1\}$-degree terms)}
	\right) x_1 +\\
	&  \left( \frac{(d-d_1)^{n-1}}{(n-1)!}  +\text{ (lower $\{d,d_1\}$-degree terms)} \right)
      \left( x_2+\dots	+x_n\right) \Bigg).
    \end{split}
  \end{equation*}

  Since we are only interested in the leading terms of the $a_H(d)$'s we can drop the lower degree
  terms and continue with the rising product
  \[
    S[P,d](d,x)=
    \prod_{d_1=0}^d
    \Bigg( \underbrace{ 1+\frac{(d-d_1)^{n-2}d_1}{(n-2)!}x_1+\frac{(d-d_1)^{n-1}}{(n-1)!}(x_2+\dots+x_n) }
    _{P(d,d_1,x)}\Bigg)=\sum_H S[P,d]_H(d)x^H.
  \]
  We will use \eqref{eq_Stirlingcoeffs_inMtildas} to compute the leading terms of the Stirling
  coefficients $S[P,d]_H(d)$:
  The coefficients in
  \[ P(d,d_1,x)=\sum_{i=1}^n P_{1_i}(d,d_1)x_i=
  \sum_{i=1}^n \left( \sum_{m \in I_P(1_i)} P_{1_i,m}(d) d_1^m \right) x_i.\]
  are
  \begin{equation*}
    P_{1_i,m}(d)=
    \begin{cases}\displaystyle
      \frac{1}{(n-2)!}\binom{n-2}{m-1}(-1)^{m-1}d^{n-1-m} \quad \left( m=1,\dots,n-1 \right) & \text{ if } i =1, \\[24pt] \displaystyle
      \frac{1}{(n-1)!}\binom{n-1}{m}(-1)^{m}d^{n-1-m} \quad \left( m=0,\dots,n-1 \right)& \text{ otherwise. }
    \end{cases}
  \end{equation*}
  
  For every $H=(H_1,\dots,H_n)$ there is a single partition
  $J=(J_1,J_2,\dots)=\left( 1_1^{H_1},\dots,1_n^{H_n}  \right)$
  with its corresponding term nonzero in \eqref{eq_Stirlingcoeffs_inMtildas}.
  Substituting the leading terms \eqref{eq_leadingtermMtuple} of the
  $\tilde{M}_\lambda(d)$'s into \eqref{eq_Stirlingcoeffs_inMtildas}, we get that
  \begin{multline*}
    S[P,d]_H(d)=\frac{1}{H!} \sum_{\lambda \in \times_{s=1}^{l(J)} I_P(J_s)}
    \left( \prod_{s=1}^{l(J)}\left( \frac{d^{\lambda_s+1}}{\lambda_s+1} \right)
    +\text{ (lower degree terms)} \right) \prod_{s=1}^{l(J)} P_{J_s,\lambda_s}(d)=
    \\
    \frac{1}{H!} \sum_{\lambda \in \times_{s=1}^{l(J)} I_P(J_s)} \left(
      \left( \prod_{s=1}^{l(J)} \frac{d^{\lambda_s+1}}{\lambda_s+1} P_{J_s,\lambda_s}(d) \right)+
      \text{ (lower degree terms)}
    \right)=
    \\
    \frac{1}{H!}
    \left( \frac{d^n}{(n-2)!} \sum_{m=1}^{n-1} \binom{n-2}{m-1} \frac{(-1)^{m-1}}{m+1} \right)^{H_1}
    \left( \frac{d^n}{(n-1)!} \sum_{m=0}^{n-1} \binom{n-1}{m} \frac{(-1)^{m}}{m+1} \right)^{H_2+\dots+H_n}
    +
    \\
    \text{ (lower degree terms)} \stackrel{\eqref{eq_1overfactorialexpansion}}{=}
    \\
    \frac{1}{H!} \left( \frac{1}{n!} \right)^{H_1+\dots+H_n} d^{n|H|}
    +\text{ (lower degree terms)}.
  \end{multline*}
   \end{proof}

   Having completed the hard work of proving the monomial symmetric polynomial version, we can
   easily deduce the results for the elementary symmetry polynomial and then the Schur polynomial
   basis:
  \begin{proof}[Proof of Theorem \ref{thrm_leadingtermcPoldCn}]
  The equivalence with Proposition \ref{prop_cPoldCn_monomsymm} immediately follows from  the multinomial theorem:
    \[ e_{(1^k)}=(x_1+\dots+x_n)^k=\sum_{\mu \vdash k} \binom{k}{\mu_1,\mu_2,\dots} m_\mu. \]
  \end{proof}

  \begin{proof}[Proof of Corollary \ref{cor_cPoldCn_Schurcoeffs}]
  To see the equivalence we need to use the fact (see e.g. \cite[Cor.~7.12.5]{stanley2023enumerative_vol2}) that
    \[ e_1^{k}(x_1,\dots,x_n)=\sum_{\lambda \vdash k} f^{\lambda} s_\lambda(x_1,\dots,x_n), \]
    where $f^\lambda$ denotes the (nonzero) number of standard Young tableaux of shape $\lambda$,
    and that the number of standard Young tableaux of shape
    $\lambda=(\lambda_1,\dots,\lambda_l) \vdash k$
    is known to be (\cite{frobenius1896sitzungsberichte})
    \[ f^\lambda=
    \frac{k! \prod\limits_{1 \leq i < j \leq l} \left( \lambda_i - \lambda_j +j-i\right)}{(\lambda_1+l-1)!(\lambda_2+l-2)!\dots \lambda_l!}. \]
      \end{proof}

\subsection{A degree upper bounds for the coefficients in the elementary symmetric polynomial basis}
\label{subsec_23_345}
This section is dedicated to Theorem \ref{thrm_23_345thrm} that gives a noteworthy upper
bound on the degrees of the coefficient in the elementary symmetric polynomial basis,
improving the bound provided by Theorem \ref{thrm_leadingtermcPoldCn} quite significantly.
In particular, Theorem \ref{thrm_23_345thrm} is no longer equivalent to the theorems and
propositions of the previous section.
Also, we will not use it in the rest of the paper, hence our decision to separate it from the
rest of Section \ref{sec_cpold}. This result seems deeper than the bounds in the other bases. Unfortunately we haven't found enumerative applications. The special case of $n=2$ was noticed by Bal\'azs K\H om\H uves.

\begin{theorem}\label{thrm_23_345thrm}
  Let
  \[ c(\Pol^{d}(\mathbb{C}^n)) = \sum_{\nu} g_\nu(d) e_\nu(x_1,\dots,x_n). \]
  Then for every $\nu=\left( 1^{H_1},\dots,n^{H_n} \right)$
  \begin{equation*}
    \deg\left( g_\nu(d) \right) \le nH_1+(n+1)H_2+\dots+(2n-1)H_n.
  \end{equation*}
\end{theorem}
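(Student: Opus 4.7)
My plan is to work via the logarithmic expansion of $c(\Pol^d(\C^n))$, which sidesteps the need for an orbit-by-orbit analysis.

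Since $c(\Pol^d(\C^n)) = \prod_{d_1+\dots+d_n=d}(1 + d_1 x_1 + \dots + d_n x_n)$ lies in $1 + (x)\,\Q[d][[x_1,\dots,x_n]]$, its formal logarithm is well-defined; using $\log(1+u) = \sum_{m\ge 1}(-1)^{m-1}u^m/m$,
\[
  \log c(\Pol^d(\C^n)) \;=\; \sum_{m \ge 1} \frac{(-1)^{m-1}}{m}\, P_m(d,x),
  \qquad
  P_m(d,x) := \!\!\sum_{d_1+\dots+d_n=d}\!\! (d_1 x_1 + \dots + d_n x_n)^m.
\]
First I would bound the $d$-degree of each coefficient of $P_m$. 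By the multinomial theorem $P_m(d,x) = \sum_{|a|=m} \binom{m}{a}\,x^a\,S_a(d)$ with $S_a(d) := \sum_{d_1+\dots+d_n=d}\prod_j d_j^{a_j}$, and an $(n-1)$-fold nested application of Faulhaber's identity (packaged in Lemma \ref{lemma_tildeMlambdaspolys}) shows that $S_a(d) \in \Q[d]$ has degree at most $|a| + n - 1$. Since $P_m$ is $S_n$-symmetric in $x$ and the change of basis from the monomial to the elementary symmetric polynomials is $d$-independent, its expansion $P_m = \sum_{\nu:\,|\nu|=m} P_m^\nu(d)\, e_\nu(x)$ also satisfies $\deg_d P_m^\nu \le m + n - 1$. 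Therefore $\log c(\Pol^d(\C^n)) = \sum_\nu r_\nu(d)\,e_\nu(x)$, where $\nu$ ranges over nonempty partitions with $l(\nu) \le n$ and $\deg_d r_\nu \le |\nu| + n - 1$.

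Next I would exponentiate. The coefficient of $e_{\nu_0}$ in $c = \exp(\log c) = \sum_{k\ge 0}(\log c)^k/k!$ is
\[
  g_{\nu_0}(d) \;=\; \sum_{k \ge 1} \frac{1}{k!}
  \sum_{\substack{(\nu_1, \dots, \nu_k) \\ \nu_1 \sqcup \cdots \sqcup \nu_k = \nu_0}}
  \prod_{i=1}^{k} r_{\nu_i}(d),
\]
where each $\nu_i$ is a nonempty partition whose multiset concatenation is $\nu_0$, forcing $k \le l(\nu_0)$. For every such tuple
\[
  \deg_d \prod_{i=1}^{k} r_{\nu_i}(d) \;\le\; \sum_{i=1}^{k}\bigl(|\nu_i| + n - 1\bigr)
  \;=\; |\nu_0| + k(n-1)
  \;\le\; |\nu_0| + (n-1)\,l(\nu_0),
\]
which for $\nu_0 = (1^{H_1}, \dots, n^{H_n})$ equals $\sum_i (n-1+i) H_i = n H_1 + (n+1) H_2 + \dots + (2n-1) H_n$, matching the theorem.

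The main technical step is the degree bound $\deg_d S_a(d) \le |a| + n - 1$, which is really just iterated Faulhaber/Lemma \ref{lemma_tildeMlambdaspolys}. The key conceptual insight, on the other hand, is to pass through $\log c$: all the seemingly mysterious cancellations between $x$-monomials (visible when converting the monomial-basis expansion of $c$ to the elementary basis) are automatically packaged in the multiplicativity of $\exp$, and the $(n-1)l(\nu_0)$ improvement over the naive bound $n|\nu_0|$ reflects exactly the fact that $\log c$ itself is polynomial in $d$ of rather modest degree. This route yields the degree bound very directly; the orbit-analysis-with-quasi-polynomial approach announced for the appendix presumably extracts sharper structural information (leading coefficients, geometric meaning) along the way.
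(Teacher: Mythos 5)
Your proof is correct, and it takes a genuinely different and considerably shorter route than the paper's. The paper proves this bound in Appendix \ref{sec_23_345thrm} by decomposing the product over the lattice points of $d\Delta^{n-1}$ into $S_n$-orbits, grouping the orbits by type, showing that each orbit-type factor is a quasi-formal power series in $d$ whose constituents obey the degree bound $sH_1+(s+1)H_2+\dots+(s+n-1)H_n$ for a type vector of length $s$, and then taking the maximum over $s\le n$; this requires an explicit enumeration of the orbits (Proposition \ref{prop_enumerate_orbits}) and a congruence-class extension of the rising-product machinery (Lemma \ref{lemma_congruenceclasspolys}). You instead observe that the improvement over the naive bound $n|\nu|$ of Theorem \ref{thrm_leadingtermcPoldCn} is already visible in $\log c\big(\!\Pol^d(\C^n)\big)$: the coefficient of $e_\nu$ there has $d$-degree at most $|\nu|+n-1$, the ``$+(n-1)$'' coming from the $(n-1)$-fold iterated Faulhaber summation over the simplex, and exponentiation converts this into $|\nu_0|+(n-1)l(\nu_0)=\sum_i(n-1+i)H_i$ because a product of $k$ nonempty factors contributing to $e_{\nu_0}$ forces $k\le l(\nu_0)$. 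All steps check out: $S_a(d)$ is a polynomial of degree at most $|a|+n-1$ (induction on $n$ with Faulhaber at each step), the monomial-to-elementary change of basis is $d$-independent so the bound transfers to the $e_\nu$-coefficients of each homogeneous piece $P_m$, and the combinatorics of $\exp$ is exactly as you state; as a bonus your argument re-proves polynomiality of the $g_\nu(d)$ without invoking Theorem \ref{thrm_cpoldCnispoly}. What the paper's longer route buys is structural information: the quasi-formal-power-series behaviour of the individual orbit-type terms, which the authors suggest combining with Proposition \ref{prop_LWleadingcoeff} to attack the conjectured leading term $\prod_i\frac{1}{H_i!}\big(\frac{(i-1)!}{(n+i-1)!}\big)^{H_i}d^{\sum_i(n-1+i)H_i}$. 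Your approach could plausibly reach that conjecture as well --- the top-degree stratum is $k=l(\nu_0)$ with every $\nu_i$ a single part, so it would suffice to compute the leading coefficient of $\coef\big(e_j,\log c(\Pol^d(\C^n))\big)$ and check it is nonzero --- and that would be worth writing up.
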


Let us remark that Theorem \ref{thrm_23_345thrm} provides a significantly better upper bound
than Proposition \ref{thrm_leadingtermcPoldCn}. For example, for the coefficient of
$e_n(x_1,\dots,x_n)$ in
$c\big(\!\Pol^d(\C^n)\big)$ it gives that
\[ \deg(g_{(n)}(d)) \le 2n-1 \]
while Theorem \ref{thrm_leadingtermcPoldCn} asserts that $\deg(g_{(n)}(d)) < n^2$.
Calculations suggest that these upper bounds are sharp:
\begin{conjecture}
  For every $\nu=(1^{H_1},\dots,n^{H_n})$ the leading term of $g_{\nu}(d)$ is
  \[ \prod_{i=1}^n  \frac{1}{H_i!}\left(\frac{(i-1)!}{(n+i-1)!} \right)^{H_i}
  d^{nH_1+(n+1)H_2+\dots+(2n-1)H_n}.  \]
\end{conjecture}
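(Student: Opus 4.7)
The plan is to strengthen the argument behind Theorem \ref{thrm_23_345thrm} so that it tracks not merely the degree bound but also the leading coefficient. I would proceed in three stages: handle the pure single-exponent cases $\nu=(i)$, extend to the pure multi-copy cases $\nu=(i^{H_i})$, and finally establish the multiplicative form for general mixed $\nu$.

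For $\nu=(i)$ I would compute $g_{(i)}(d)$ directly from the iterated rising product \eqref{eq_cpoldcn_asseriesofproducts}, mirroring the strategy of the proof of Proposition \ref{prop_cPoldCn_monomsymm} but projecting onto the $e_i$-coefficient rather than an $m_\mu$-coefficient. The degree bound from Theorem \ref{thrm_23_345thrm} together with Proposition \ref{prop_stepLW} should single out essentially one dominant partition contribution, after which a direct computation using an identity analogous to \eqref{eq_1overfactorialexpansion} ought to produce the predicted factor $\frac{(i-1)!}{(n+i-1)!}$. For $\nu=(i^{H_i})$ I would then induct on $H_i$ via a ``pick one more $e_i$'' argument in the rising-product expansion; the factor $\frac{1}{H_i!}$ arises naturally from the symmetry among the chosen copies. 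For general mixed $\nu=(1^{H_1},\dots,n^{H_n})$, the multiplicative shape of the conjectured leading term strongly suggests that at top degree the different $e_i$-slots decouple, and I would try to realize this decoupling through the quasi-polynomial orbit decomposition of the $\GL(n)$-action developed in Appendix \ref{sec_23_345thrm}: the stratification by orbit type should force each $e_i$-slot to draw its leading contribution from an essentially independent summand of the rising product, whose assembly is exactly the product over $i$ of the pure contributions.

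The main obstacle, I expect, is precisely this decoupling step. In the mixed case many different terms in the rising-product expansion can in principle reach the full degree $nH_1+(n+1)H_2+\dots+(2n-1)H_n$, and one has to show that their leading coefficients assemble into the claimed product rather than cancelling in part or producing spurious cross-terms. A convincing proof will likely require a combinatorial identity generalizing \eqref{eq_1overfactorialexpansion} that quantifies why interactions between distinct $e_i$'s are subleading. A backup route would be to go through the Schur-basis leading terms of Corollary \ref{cor_cPoldCn_Schurcoeffs} together with explicit Schur-to-elementary transition formulas, but the sparse degree pattern in the $\{e_\nu\}$ basis (visible already in the $c_4(\Pol^d(\C^2))$ example) points to substantial cancellations that would make such a direct approach delicate in its own right.
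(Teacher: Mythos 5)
The statement you are trying to prove is stated in the paper as a \emph{conjecture}: the authors give no proof, only the one-line remark that ``a possible approach is to combine the proof of Theorem \ref{thrm_23_345thrm} and of Proposition \ref{prop_cpoldc2_elemsymmleadingterm}.'' Your outline is essentially that same suggested route, and you are candid that the key step is missing, so what you have is a plausible plan rather than a proof. Two concrete points about the plan itself.

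First, your Stage 1 as described would not get off the ground: the individual factors of the iterated rising product \eqref{eq_cpoldcn_asseriesofproducts} are not symmetric in $x_1,\dots,x_n$, so there is no ``$e_i$-coefficient'' to project onto factor by factor, and the passage from the monomial-basis coefficients $a_\mu(d)$ to the elementary-basis coefficients $g_\nu(d)$ goes through an inverse transition matrix with no usable closed form (this is exactly the difficulty flagged in Remark \ref{rmk:duan}, and it is why the degree drop from $n|H|$ to $\sum_i(n-1+i)H_i$ is invisible in the monomial basis). Any computation of $g_{(i)}(d)$ must start from the orbit-type factorization \eqref{eq_cPold_intoorbittypes}, whose factors $P_u(d_1,\dots,d_s,e)$ are already polynomials in the $e_i$'s; you only invoke that machinery in Stage 3, but it is needed from the very first step. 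Second, the ``decoupling'' you identify as the main obstacle is in fact the part the Appendix already settles: by Proposition \ref{prop_modRu} the coefficient of $e^H$ in $R_{u,\bar q}(d,e)$ has degree at most $\sum_i\bigl(l(u)-1+i\bigr)H_i$, which for $H\neq 0$ is strictly below $\sum_i(n-1+i)H_i$ unless $u=(1^n)$; since every $R_u$ has constant term $1$, the top-degree part of $g_\nu(d)$ is contributed by the single orbit-type-term $R_{(1^n)}$, with no cross-terms and no cancellation to worry about. The genuinely open step is the one your proposal does not address: computing the leading coefficient of $R_{(1^n),\bar q_{n!},H}(d)$ by pushing a leading-coefficient analogue of Proposition \ref{prop_LWleadingcoeff} through the $(n-1)$-fold iteration of Lemma \ref{lemma_congruenceclasspolys} (where the floors $\lfloor F_j/N_j\rfloor$ and the residue-class bookkeeping enter), evaluating the resulting nested sums by an identity in the spirit of \eqref{eq_1overfactorialexpansion} to produce $\prod_i\bigl((i-1)!/(n+i-1)!\bigr)^{H_i}$, and verifying that the answer is independent of the residue class modulo $n!$ (which must hold a posteriori because $g_\nu(d)$ is a genuine polynomial, but needs to come out of the computation). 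Until that calculation is carried out, the statement remains, as in the paper, a conjecture.
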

A possible approach to prove this conjecture is to combine the proof of  Theorem \ref{thrm_23_345thrm} and of Proposition \ref{prop_cpoldc2_elemsymmleadingterm}.
\bigskip

Our proof of Theorem \ref{thrm_23_345thrm} is a lengthy, technical application of our theory
of rising products. We postpone it to Appendix \ref{sec_23_345thrm}. Here, we present the proof
(and the calculation of the leading terms) only for the first, $n=2$ case. 
This proof already contains the core ideas for the general statement and also illustrates the
use of Proposition \ref{prop_stepLW}.

\begin{proposition}\label{prop_cpoldc2_elemsymmleadingterm}
  Let 
  \[ c\!\left(\Pol^d(\C^2)\right)=\sum_{\nu} g_\nu(d) e_\nu(x_1,x_2) .\]
  Then for every multiciplity vector $H=(H_1,H_2)$ the coefficient of
  $g_{\left(1^{H_1},2^{H_2}\right)}(d)\in \Q[d]$
  has leading term
  \[ \frac{1}{H_1! H_2!} \frac{1}{2^{H_1+H_2}3^{H_2}} d^{2H_1+3H_2} .\]
\end{proposition}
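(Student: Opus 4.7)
The plan is to apply Proposition \ref{prop_stepLW} to the rising product presentation
$c\!\left(\Pol^{2\delta+1}(\C^2)\right) = \prod_{t=0}^{\delta} P(\delta,t,e_1,e_2)$
from Section \ref{sec:cpoldc2-rising}. Since $g_{(1^{H_1},2^{H_2})}(d)$ and $S[P]_H(\delta)$ are both polynomials related by \eqref{eq_coldc2coeffsvsoddcoeffs}, determining the $\delta$-leading term of $S[P]_H(\delta)$ on infinitely many odd values of $d$ pins down the $d$-leading term of $g_{(1^{H_1},2^{H_2})}(d)$ on the nose.

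First, I read off the explicit components
\[
P_{(1,0)}(\delta,t) = 1+2\delta,\quad P_{(2,0)}(\delta,t) = (1+2\delta)t - t^2,\quad P_{(0,1)}(\delta,t) = (1+2\delta-2t)^2,
\]
of total degrees $1$, $2$, $2$ respectively in $(\delta,t)$. The linear form $W(E_1,E_2)=E_1+2E_2$ therefore satisfies $\deg P_E(\delta,t)\le W(E)$ for every nonzero $E$, so part \ref{item_propstep1} of Proposition \ref{prop_stepLW} gives $\deg_\delta S[P]_H(\delta) \le W(H)+|H| = 2H_1+3H_2$, which equals the degree in $d$.

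Next, by part \ref{item_propstep2}, the $\delta$-leading term of $S[P]_H(\delta)$ comes from the $e$-linear truncation $\prod_{t=0}^{\delta}\bigl(1 + P_{(1,0)}\,e_1 + P_{(0,1)}\,e_2\bigr)$. In formula \eqref{eq_Stirlingcoeffs_inMtildas}, only the finest vector partition $J=(1_1^{H_1},1_2^{H_2})\vdash H$ can reach the top degree $2H_1+3H_2$, and by the leading-order formula \eqref{eq_leadingtermMtuple} for $\tilde M_\lambda$, the sum over $\lambda$ factorizes slot by slot. Each of the $H_1$ slots of type $1_1$ forces $\lambda_s=0$ (since $P_{(1,0)}$ is $t$-independent) and contributes leading order $(1+2\delta)\cdot\delta \sim 2\delta^2$; each of the $H_2$ slots of type $1_2$ contributes the leading part of $\sum_{m=0}^{2} P_{(0,1),m}(\delta)\,\delta^{m+1}/(m+1)$, which is $(4 - 4 + \tfrac{4}{3})\delta^3 = \tfrac{4}{3}\delta^3$. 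Multiplying and dividing by $\mult(J)! = H_1!H_2!$ gives the $\delta$-leading coefficient $\frac{2^{H_1+2H_2}}{H_1!\,H_2!\,3^{H_2}}\,\delta^{2H_1+3H_2}$; the substitution $\delta=(d-1)/2$ multiplies this by $2^{-(2H_1+3H_2)}$ at the top degree, yielding the claimed $\frac{1}{H_1!\,H_2!\,2^{H_1+H_2}3^{H_2}}\,d^{2H_1+3H_2}$.

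The technical crux is the cancellation $4 - 4 + \tfrac{4}{3} = \tfrac{4}{3}$ in the $H_2$-slot sum. This is not accidental: since $P_{(0,1)}(\delta,t)=(1+2\delta-2t)^2$, its coefficients $P_{(0,1),m}(\delta)$ are, up to sign, $4\binom{2}{m}(-1)^m$ times a $\delta$-power, and the identity becomes the instance $n=2$, $k=1$ of \eqref{eq_1overfactorialexpansion}. Everything else is a bookkeeping calculation inside the already-established framework of rising products and Proposition \ref{prop_stepLW}.
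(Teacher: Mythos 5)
Your proposal is correct and follows essentially the same route as the paper's proof: restricting to odd $d$, pairing opposite factors into the rising product $\prod_{t=0}^{\delta}P(\delta,t,e_1,e_2)$, applying Proposition \ref{prop_stepLW} with $W(E)=E_1+2E_2$, isolating the finest vector partition $(1_1^{H_1},1_2^{H_2})$ as the sole top-degree contributor, and summing the slot contributions (the paper gets the same $2^{H_1}(4/3)^{H_2}$ before the substitution $\delta=(d-1)/2$). Your added observations---the factored form $P_{(0,1)}=(1+2\delta-2t)^2$ and the identification of the cancellation $4-4+\tfrac{4}{3}=\tfrac{4}{3}$ as the $n=2$, $k=1$ instance of \eqref{eq_1overfactorialexpansion}---are pleasant but do not change the argument.
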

\begin{proof}
  The proof is based on the approach described in Remark \ref{rmrk_c3ofpoldc2}:
  We restrict our attention to odd $d$'s, take the product of ``opposite'' terms in 
  the elementary symmetric polynomial basis
  \begin{multline*}
    P(\delta,t,e_1,e_2) := 1+(1+2\delta)e_1+((1+2\delta)t-t^2)e_1^2+
    ((1+4\delta+4\delta^2)+(-4-8\delta)t+4t^2)e_2 
  \end{multline*}
  so that we can write
  \[ c\!\left( \Pol^{2\delta+1}(\C^2)\right)=\prod_{t=0}^\delta P(\delta,t,e_1,e_2)=
  \sum_H S[P]_H(\delta) e^H=S[P](\delta,e) \]
  as a rising product.

  Then, we can use Proposition \ref{prop_stepLW} to study the asymptotic behaviour of its Stirling
  coefficients $S[P]_H(\delta) \in \Q[\delta]$. By \eqref{eq_coldc2coeffsvsoddcoeffs}, this leads
  to the leading term of $g_{(1^{H_1},2^{H_2})}(d) \in \Q[d]$.

  Let us adapt the notations of Proposition \ref{prop_stepLW}: The linear form
  \[ W((E_1,E_2)):=E_1+2E_2 \] 
  provides a (sharp) upper bound, $\deg(P_E(\delta,t)) \le W(E)$ for the degrees of the coefficients
  $P_E(\delta,t)$ of $P(\delta,t,e)=\sum_E P_E(\delta,t)e^E$.
  Therefore, by \ref{item_propstep1} of Proposition \ref{prop_stepLW},
  \[ \deg(S[P]_H(\delta)) \leq W(H)+|H|=2H_1+3H_2 \]
  for every $H=(H_1,H_2)$.
  We know, by \ref{item_propstep2} of Proposition \ref{prop_stepLW}, that if
  $\deg(S[P]_H(\delta))$ reaches $2H_1+3H_2$,
  then 
  the leading term of 
  $S[P]_H(\delta)$ comes from the
  \[ \prod_{t=0}^\delta
    \underbrace{1+(1+2\delta)e_1+ ((1+4\delta+4\delta^2)+(-4-8\delta)t+4t^2)e_2}_
    {P'(\delta,t,e_1,e_2)}=S[P'](\delta,e)=\sum_H S[P']_H(\delta) e^H \]
    summand of $S[P](\delta,e)$.

    We will use Proposition \ref{prop:rising}
    to show that for every $H=(H_1,H_2)$ the degree $2H_1+3H_2$ part of $S[P']_H(\delta)$ is
    nonzero, hence, $\deg(S[P]_H(\delta))=W(H)+|H|$ and the leading terms of 
    $S[P]_H(\delta)$ and $S[P']_H(\delta)$ agree.

    First, note that for every $H=(H_1,H_2)$ there is a single partition
    $J=\left( 1_1^{H_1},1_2^{H_2} \right) \vdash H$ (of length
    $l(J)=|H|=H_1+H_2$) such that the corresponding term of
    \eqref{eq_Stirlingcoeffs_inMtildas} is nonzero, hence,
    \begin{equation}\label{eq_cpoldc2elemsymmlt}
      S[P']_H(\delta)=\frac{1}{H_1! H_2!} \sum_{\lambda \in   \N^{|H|} }
      \prod_{s=1}^{|H|} \left( P'_{J_s,\lambda_s}(\delta) \right)\tilde{M}_\lambda(\delta).
    \end{equation}
    As $P'_{E,m}(\delta)=0$ except for 
    \[ P'_{1_1,0}(\delta)=2\delta+1, \quad P'_{1_2,0}(\delta)=4\delta^2+4\delta+1, \quad
    P'_{1_2,1}(\delta)=-8\delta-4, \quad P'_{1_2,2}(\delta)=4, \]
    the $\lambda$'s giving nonzero terms of \eqref{eq_cpoldc2elemsymmlt} all have coordinates
    \begin{equation*}
      \lambda_s=
      \begin{cases}
	0 & \text{ if } s=1,\dots,H_1, \\
	0,1 \text{ or } 2 & \text{ if } s=H_1+1,\dots,H_1+H_2.
      \end{cases}
    \end{equation*}
    Therefore, \eqref{eq_cpoldc2elemsymmlt} is further equal to
    \begin{equation}\label{eq_cpoldc2elemsymmlt2}
      S[P']_H(\delta)=\frac{1}{H_1! H_2!}  \sum_{\sigma \in \left\{ 0,1,2 \right\}^{H_2}}
      P'_{1_1,0}(\delta)^{H_1}
      \prod_{s=1}^{H_2} \left( P'_{1_2,\sigma_s}(\delta) \right)
      \tilde{M}_{0^{H_1},\sigma}(\delta).
    \end{equation}

    To extract the leading term, let us first recall that, by Lemma
    \ref{lemma_tildeMlambdaspolys}, the leading term of $\tilde{M}_{0^{H_1},\sigma}(\delta)$ is
    \[ \left( \frac{\delta^{0+1}}{0+1}\right)^{H_1} \prod_{s=1}^{H_2}
    \frac{\delta^{\sigma_s+1}}{\sigma_s+1}. \]
    As for every $m \in \{0,1,2\}$ $\deg(P'_{1_2,m}(\delta))=2-m$,
    we see that for every
    $\sigma \in \left\{ 0,1,2 \right\}^{H_2}$ the degree of the corresponding term in 
    \eqref{eq_cpoldc2elemsymmlt2} is the same,
    \[ (1+0+1)H_1+\sum_{s=1}^{H_2}
    (2-\sigma_2)+(\sigma_2+1)=2H_1+3H_2. \]
    This means that the leading coefficient of $S[P^{'}]_H(\delta)$ is the sum of the leading
    coefficients of the terms corresponding to all $\sigma$'s.
    By looking at the leading coefficients of the $P'_{1_i,m}(\delta)$'s and the 
    $\tilde{M}_{0^{H_1},\sigma}(\delta)$'s, we see that this sum is
    \[ \frac{1}{H_1! H_2!} \left(2 \cdot \frac{1}{0+1} \right)^{H_1} \left(4 \cdot \frac{1}{0+1} 
    -8 \cdot \frac{1}{1+1}+4 \cdot \frac{1}{2+1} \right)^{H_2}=
  \frac{1}{H_1! H_2!} 2^{H_1} \left( \frac{4}{3} \right)^{H_2} .\]

  Finally, by \eqref{eq_coldc2coeffsvsoddcoeffs}, we get that
  \[ \text{the leading term of } g_{\left(1^{H_1},2^{H_2}\right)}(d)=
  \frac{1}{H_1! H_2!} \left( \frac{1}{2} \right)^{H_1+H_2}\left( \frac{1}{3} \right)^{H_2}. \]
  \end{proof}

  \bigskip

  Let us remark here, that by
  a step-by-step generalization of the previous proof, 
  part \ref{item_propstep2} of Proposition \ref{prop_stepLW} can be easily strengthened:
  \begin{proposition}\label{prop_LWleadingcoeff}
    If there is a linear form $W: \mathbb{Z}^\infty \to \mathbb{Z}$ such that for every 
    $E \in \mathbb{N}^\infty$, $\deg(P_E(d,t)) \leq W(E)$. Then
    $\deg(S[P,L]_H(d))$ reaches $W(H)+|H|$ for every $H$ if and only if
    \begin{multline*}
      \deg(P_{1_i}(d,t))=W(1_i) \quad \text{ and }
      \sum_{\substack{m: \\ \deg(P_{1_i,m}(d))=W(1_i)-m}}
      \left( \frac{1}{1+m}
      \coef\left(d^{W(1_i)-m}, P_{1_i,m}(d)\right) \right) \neq 0 \\
      \text{ for every } i=1,\dots,n.
    \end{multline*}
    In this case, the leading coefficient of $S[P,L]_H(d)$ is
    \[ \frac{1}{H!}\prod_{i=1}^n \left( \sum_{\substack{m: \\ \deg(P_{1_i,m}(d))=W(1_i)-m}}
	\left( \frac{1}{1+m}
      \coef\left(d^{W(1_i)-m}, P_{1_i,m}(d)\right) \right) \right).
    \]
  \end{proposition}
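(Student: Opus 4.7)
The plan is to refine the argument of Proposition~\ref{prop_stepLW} by tracking leading coefficients rather than merely bounding degrees. Starting from the formula \eqref{eq_Stirlingcoeffs_inMtildas} for $S[P,L]_H(d)$, the key observation from the proof of Proposition~\ref{prop_stepLW} is that in the estimate \eqref{eq_deglambdacontrib_estimate} the quantity $W(H)+l(J)$ attains the potential top degree $W(H)+|H|$ only when $l(J)=|H|$, which forces the unique vector partition $J=(1_1^{H_1},\dots,1_n^{H_n})$. Hence only this single $J$ can contribute to the top degree, and the analysis reduces to extracting the degree-$W(H)+|H|$ coefficient of
\[ \frac{1}{H!}\sum_{\lambda \in \N^{|H|}} \Bigl(\prod_{s=1}^{|H|} P_{J_s,\lambda_s}(d)\Bigr)\, \tilde{M}_\lambda(L(d)). \]

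Since $L$ is linear, Lemma~\ref{lemma_tildeMlambdaspolys} combined with \eqref{eq_leadingtermMtuple} yields that $\tilde{M}_\lambda(L(d))$ has degree $|\lambda|+|H|$ with top-degree component $\prod_s \frac{L(d)^{\lambda_s+1}}{\lambda_s+1}$. A given $\lambda$ therefore contributes to degree $W(H)+|H|$ exactly when the inequality in \eqref{eq_deglambdacontrib_estimate} is saturated at every position, i.e.\ when $\deg(P_{J_s,\lambda_s}(d))=W(J_s)-\lambda_s$ for each $s$. Restricting the sum to such $\lambda$ and keeping only the top-degree pieces of the $P_{J_s,\lambda_s}(d)$'s, the $s$-th factor becomes $\frac{1}{\lambda_s+1}\coef(d^{W(J_s)-\lambda_s},P_{J_s,\lambda_s}(d))$, and summing over the admissible values of $\lambda_s$ produces one independent sum per position $s$.

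Because the multiset of parts $(J_s)_{s=1}^{|H|}$ consists of $1_i$ repeated $H_i$ times, the product over $s$ collapses into $n$ independent sums grouped by $i$, and the prefactor $1/\mult(J)!=1/H!=1/\prod_i H_i!$ fits cleanly with this grouping to yield the announced product formula. The iff then follows: if each $i$-th sum is nonzero, the product is nonzero for every $H$; conversely, if some $i$-th sum vanishes---including the case where it is empty because $\deg P_{1_i}(d,t)<W(1_i)$---then specialising to $H=1_i$ shows that $S[P,L]_{1_i}(d)$ has degree strictly below $W(1_i)+1$, so saturation fails.

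The main subtlety is the bookkeeping: one must verify that no vector partition $J$ with $l(J)<|H|$ can sneak in a competing top-degree contribution via a compensating $\tilde{M}_\lambda$ term, which is precisely what the estimate \eqref{eq_deglambdacontrib_estimate} rules out, and that the factorisation of the sum over $\lambda$ into $n$ independent $i$-sums, each raised to the power $H_i$, is consistent with the combinatorial prefactor. This follows the same pattern as the proof of Proposition~\ref{prop_cpoldc2_elemsymmleadingterm}, whose generalisation to arbitrary $n$ is essentially mechanical.
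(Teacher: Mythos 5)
Your proposal is correct and takes essentially the same route the paper intends: the paper gives no separate proof of this proposition, only the remark that it follows by a ``step-by-step generalization'' of the proof of Proposition~\ref{prop_cpoldc2_elemsymmleadingterm}, and your write-up carries out exactly that generalization (the unique top-degree vector partition $J=\left(1_1^{H_1},\dots,1_n^{H_n}\right)$ via \eqref{eq_deglambdacontrib_estimate}, saturation of the degree bound at every position, and factorization of the $\lambda$-sum into independent $i$-sums, with the empty-sum case handling $\deg P_{1_i}(d,t)<W(1_i)$ in the converse direction). Note only that your derivation correctly produces each $i$-th sum raised to the power $H_i$, consistent with the $n=2$ computation in Proposition~\ref{prop_cpoldc2_elemsymmleadingterm}, whereas the displayed formula in the proposition as printed omits these exponents.
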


  \subsection{On closed formulas and generating functions}\label{sec_onclosedformulas}
  For any given $k$ and $n$ we showed that $c_k(\Pol^{d}(\mathbb{C}^n))$ is a polynomial and we also have a degree bound so we can interpolate. In fact, this is how we calculated e.g. the formulas of Remark \ref{rmrk_c3ofpoldc2}.
  
  \medskip 

  In Corollary \ref{cor_cPoldCn_Schurcoeffs} and Proposition \ref{prop_cPoldCn_monomsymm} we gave
  formulas for the leading terms of the coefficients of $c(\Pol^{d}(\mathbb{C}^n))$ in
  terms of the corresponding partition.
  One may wonder if we can go beyond asymptotics and give such closed formulas also for
  the coefficients of $c(\Pol^{d}(\mathbb{C}^n))$. 

  In some sense even the Stirling numbers, the simplest Stirling coefficients, cannot be
  expressed by closed formulas. To obtain closed formulas, we need to fix our ``atoms'',
  expressions which we treat as building blocks.
  In Section \ref{sec_Stirlingcoefs} we chose arithmetic specializations of symmetric functions
  as ``atoms'' in our formulas.
  Theorem \ref{thm:cPoldC2_elemsymmcoeff} and Remark \ref{rmk:duan} also
  provide, in this sense, closed formulas for the coefficients of $c(\Pol^{d}(\mathbb{C}^2))$.
  
  \medskip

   An other possibility would be to give the answer in terms of a generating function.
   But in our case we started with the generating function \eqref{non-rising}. This is an example when it is difficult to obtain the desired information from the generating function.
   
   \medskip
    
  In \cite[Thm.~5.1, Cor 5.7]{feher2023plucker} we gave a closed formula for the Euler class
$e(\Pol^{d}(\mathbb{C}^2))=c_{d+1}(\Pol^{d}(\mathbb{C}^2))$ in terms of Stirling numbers:

\begin{theorem}\label{thm-euler}
  The coefficients of Schur polynomials in $c_{d+1}\!\left( \Pol^d(\C^2) \right)$ are
  \begin{multline*}
    e^d_j:=\coef\left( s_{d+1-j,j}, c_{d+1}\left( \Pol^d\left( \C^2 \right) \right)\right)=\\[2mm]
    \begin{cases}
      0 & \text{ if } j=0, \\[8pt]
      \begin{multlined}
	\sum_{k=j-1}^{d-j} \left( (-1)^{k+j-1}\binom{k}{j-1}\stir{d}{d-k}d^{d+1-k} \right) + \\
	\sum_{k=d-j+1}^{d-1} \left(\left( (-1)^{k+j-1}\binom{k}{j-1}-
	(-1)^{d+1+k-j}\binom{k}{d-j+1}\right) \stir{d}{d-k} d^{d+1-k} \right)
      \end{multlined}& \text{ otherwise. } \\
    \end{cases}
  \end{multline*}
\end{theorem}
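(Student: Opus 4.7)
The plan is to expand $c_{d+1}(\Pol^d(\C^2))$ first in the monomial basis, then read off the Schur coefficients by a telescoping identity, and finally reshape the result into the asserted form using Stirling-number recursions.

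First I will rewrite the Euler class as a Pochhammer symbol. The substitution $y=dx_2/(x_1-x_2)$ converts $tx_1+(d-t)x_2$ into $(x_1-x_2)(t+y)$, so
\[
c_{d+1}\!\left(\Pol^d(\C^2)\right)=(x_1-x_2)^{d+1}\prod_{t=0}^d(t+y).
\]
The defining generating function $y(y+1)\cdots(y+d)=\sum_{k=0}^{d+1}\stir{d+1}{k}y^k$ of the unsigned Stirling numbers of the first kind then gives
\[
c_{d+1}\!\left(\Pol^d(\C^2)\right)=\sum_{k=0}^{d+1}\stir{d+1}{k}d^k x_2^k(x_1-x_2)^{d+1-k},
\]
and expanding $(x_1-x_2)^{d+1-k}$ by the binomial theorem produces a closed expression for every monomial coefficient $c_i:=\coef(x_1^i x_2^{d+1-i},\,c_{d+1}(\Pol^d(\C^2)))$ as a Stirling sum.

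Next I will invert into the Schur basis. In two variables $s_{d+1-j,j}(x_1,x_2)=\sum_{i=j}^{d+1-j}x_1^i x_2^{d+1-i}$, so the coefficient of $x_1^j x_2^{d+1-j}$ in $c_{d+1}=\sum_{j\ge 0}e^d_j s_{d+1-j,j}$ equals $e^d_0+\dots+e^d_j$. Setting $c_{-1}:=0$, this forces $e^d_j=c_j-c_{j-1}$. The boundary case $j=0$ reduces to $c_0=0$, which is immediate because the factor at $t=d$ kills the coefficient of $x_2^{d+1}$.

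The final and main step is to reshape the difference $c_j-c_{j-1}$ into the two-range sum of the theorem. Pascal's rule $\binom{d+1-k}{j}+\binom{d+1-k}{j-1}=\binom{d+2-k}{j}$ collapses the difference into a single Stirling sum; the first-kind recursion $\stir{d+1}{k}=d\stir{d}{k}+\stir{d}{k-1}$ followed by the reindexing $k\mapsto d-k$ replaces $\stir{d+1}{k}d^k$ by the pattern $\stir{d}{d-k}d^{d+1-k}$ that appears in the statement; and the symmetry $c_i=c_{d+1-i}$ lets one split the resulting range at $k=d-j$, which is exactly the index where the correction binomial $\binom{k}{d-j+1}$ first contributes and produces the second summand. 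This last step is where all of the technical work sits: the sign, binomial-range and Stirling-index bookkeeping is the main obstacle, while the first two steps are essentially formal.
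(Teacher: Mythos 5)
Your argument is correct, and it is worth noting that the paper itself offers no proof to compare against: Theorem \ref{thm-euler} is imported verbatim from \cite[Thm.~5.1, Cor.~5.7]{feher2023plucker}, so your proposal supplies a self-contained derivation where the text only cites. The three structural steps all check out. The substitution $y=dx_2/(x_1-x_2)$ does give $c_{d+1}(\Pol^d(\C^2))=\sum_{k}\stir{d+1}{k}d^k x_2^k(x_1-x_2)^{d+1-k}$; the identity $s_{d+1-j,j}(x_1,x_2)=\sum_{i=j}^{d+1-j}x_1^ix_2^{d+1-i}$ forces $e^d_j=c_j-c_{j-1}$; and $c_0=0$ because the $t=d$ factor is $dx_1$. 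For the final step, carrying out your recipe explicitly: Pascal's rule turns $c_j-c_{j-1}$ into $\sum_k\stir{d+1}{k}d^k(-1)^{d+1-k-j}\binom{d+2-k}{j}$, the recursion $\stir{d+1}{k}=d\stir{d}{k}+\stir{d}{k-1}$ followed by $k\mapsto d-k$ collapses this to the single sum
\begin{equation*}
e^d_j=\sum_{k=0}^{d-1}(-1)^{k+j-1}\binom{k+1}{j-1}\stir{d}{d-k}\,d^{\,d+1-k},
\end{equation*}
and writing $\binom{k+1}{j-1}=\binom{k}{j-1}+\binom{k}{j-2}$ reduces the remaining discrepancy with the stated two-range formula to the single identity $\sum_k(-1)^{k+j}\binom{k}{j-2}\stir{d}{d-k}d^{d+1-k}=\sum_k(-1)^{k+d-j+1}\binom{k}{d-j+1}\stir{d}{d-k}d^{d+1-k}$, which is precisely the symmetry $c_{j-1}=c_{d+2-j}$ you invoke (I checked this numerically for $d\le 4$, where both expressions reproduce $e^2_1=4$, $e^3_2=27$, $e^4_2=320$, matching Example \ref{fano-curve-degree}). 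So the chain of identities you name is exactly sufficient; the only criticism is that you leave this bookkeeping as an assertion rather than executing it. A small bonus of your route: the displayed single-sum formula above is arguably cleaner than the two-range form in the statement.
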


\subsection{The $n$-dependence of \texorpdfstring{$c_k\big(\Pol^d(\mathbb{C}^n)\big)$}{ck(Pold(Cn))}}
\label{sec:separately}
In \cite{ciliberto_zaidenberg2020onfanoschemes} $c_k\big(\!\Pol^d(\C^n)\big)$ is calculated for $k=1$ and $k=2$:
\begin{equation*}
c_1\big(\!\Pol^d(\C^n)\big)=\binom{d+n-1}{n}e_1
\end{equation*}
and
\begin{equation*}
c_2\big(\!\Pol^d(\C^n)\big)=\binom{d+n}{n+1}e_2+
\left(\frac{1}{2}\binom{d+n-1}{n}^2-\frac{1}{2}\binom{d+n-1}{n}-\binom{d+n-1}{n+1}\right)e_1^2.
\end{equation*}

Based on these and some further computer aided calculations we formulated two conjectures:

\begin{conjecture}[Weak conjecture]
$c_k\big(\!\Pol^d(\C^n)\big)$ is a polynomial in $n$ for any given $k$ and $d$.
\end{conjecture}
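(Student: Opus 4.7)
The plan is to establish the conjecture first in the monomial symmetric polynomial basis; the Schur and elementary versions then follow for $n \ge k$, since the relevant change-of-basis matrices (Kostka numbers and analogues) are independent of $n$ in that range. Fix a partition $\mu \vdash k$ of length $l$. By $S_n$-symmetry, the coefficient $a_\mu(n,d)$ of $m_\mu(x_1,\dots,x_n)$ in $c_k(\Pol^d(\C^n))$ equals the coefficient of the single monomial $x^\mu = x_1^{\mu_1}\cdots x_l^{\mu_l}$, which is well defined for every $n \ge l$.

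First I would use that $c_k(\Pol^d(\C^n)) = e_k\bigl(\{\ell_v : v \in I_{d,n}\}\bigr)$, where $I_{d,n} = \{v \in \N^n : |v|=d\}$ and $\ell_v = \sum_i v_i x_i$, to obtain the combinatorial expansion
\begin{equation*}
a_\mu(n,d) = \sum_{\substack{(T_1,\dots,T_l)\ \text{pairwise disjoint}\\ T_i \subseteq I_{d,n},\ |T_i|=\mu_i}} \prod_{i=1}^l\prod_{v \in T_i} v_i.
\end{equation*}

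Next I would split each $v \in I_{d,n}$ into its head $h(v) = (v_1,\dots,v_l) \in \N^l$ and tail $(v_{l+1},\dots,v_n)$, noting that $v_i$ depends only on the head for $i \le l$. For each head $h \in \N^l$ with $|h| \le d$, the number of $v \in I_{d,n}$ with $h(v)=h$ equals
\begin{equation*}
N_h(n) = \binom{d - |h| + n - l - 1}{n - l - 1},
\end{equation*}
a polynomial in $n$ of degree $d - |h|$. Regrouping the outer sum by the counts $m_{h,i} := \#\{v \in T_i : h(v)=h\}$, which must satisfy $\sum_h m_{h,i} = \mu_i$, one gets
\begin{equation*}
a_\mu(n,d) = \sum_{(m_{h,i})} \prod_h \left( \frac{N_h(n)!}{(N_h(n) - \sum_i m_{h,i})!\,\prod_i m_{h,i}!}\, \prod_i h_i^{m_{h,i}} \right),
\end{equation*}
where the multinomial counts the ways to choose and then partition the required distinct vectors out of each head-$h$ group, and the $h_i^{m_{h,i}}$ factor records their contribution $v_i = h(v)_i$.

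Finally, each such multinomial equals the falling factorial $N_h(n)(N_h(n)-1)\cdots(N_h(n)-\sum_i m_{h,i}+1)/\prod_i m_{h,i}!$, a polynomial in $N_h(n)$ and hence in $n$; it vanishes precisely when $N_h(n) < \sum_i m_{h,i}$, matching the combinatorial convention. Since only the finitely many heads with $|h| \le d$ contribute nontrivially and each $m_{h,i}$ is bounded by $\mu_i \le k$, the outer sum is finite, so $a_\mu(n,d) \in \Q[n]$. The main technical point is ensuring that the polynomial formula matches the combinatorial sum for every $n \ge l$, including small values where the combinatorial count must vanish; the falling factorial form is precisely what makes this match automatic, so no extra casework on small $n$ is needed.
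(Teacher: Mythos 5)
The paper does not prove this statement: it appears only as the ``weak conjecture'' in Section \ref{sec:separately}, supported by the closed formulas for $k=1,2$ quoted from \cite{ciliberto_zaidenberg2020onfanoschemes} and by computer experiments, so there is no proof of record to compare yours against. Your argument looks correct and complete to me, and it is genuinely different in spirit from the paper's machinery: the rising-product/Stirling-coefficient apparatus is built for the $d$-direction, where the number of linear factors $\binom{d+n-1}{n-1}$ grows like a polynomial of degree $n-1$ in $d$, whereas in the $n$-direction (with $d$ fixed) that same count is a polynomial in $n$ of degree $d-1$, and your head/tail decomposition exploits exactly this. The three steps all check out: the expansion of $\coef\big(x^\mu, e_k(\{\ell_v\})\big)$ as a sum over ordered tuples of pairwise disjoint sets $(T_1,\dots,T_l)$ is the standard multinomial expansion; the count $N_h(n)=\binom{d-|h|+n-l-1}{n-l-1}$ of vectors with a given head $h$ is a polynomial in $n$ of degree $d-|h|$ that evaluates correctly down to $n=l$; and the regrouped sum over the matrices $(m_{h,i})$ has a finite index set independent of $n$, each summand being the falling factorial $N_h(n)(N_h(n)-1)\cdots(N_h(n)-M_h+1)/\prod_i m_{h,i}!$ times constants, so the total lies in $\Q[n]$, with the falling factorial automatically annihilating the terms where $N_h(n)<M_h$. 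As a sanity check, your formula gives $a_{(1)}(n,d)=\sum_{h_1=0}^{d}h_1\binom{d-h_1+n-2}{n-2}=\binom{d+n-1}{n}$, matching $c_1\big(\!\Pol^d(\C^n)\big)$. Two points to make explicit in a write-up: first, the meaning of ``polynomial in $n$'' --- you obtain $a_\mu(n,d)\in\Q[n]$ for $n\ge l(\mu)$, and since $m_\mu(x_1,\dots,x_n)=0$ for $n<l(\mu)$ the identity $c_k=\sum_{\mu\vdash k}a_\mu(n,d)\,m_\mu$ actually holds for all $n$, with the elementary and Schur versions following from the stable ($n$-independent) transition matrices as you say; second, the argument yields the degree bound $\deg_n a_\mu(n,d)\le k(d-1)$, consistent with the $k=1,2$ formulas. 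It does not by itself address the strong conjecture on the binomial-coefficient structure, but it would settle the weak conjecture.
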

Notice that this conjecture implies that $c_k\big(\!\Pol^d(\C^n)\big)$ is \emph{separately polynomial} in $n$ and $d$
for any $k$, but not a polynomial in $n,d$.

\begin{conjecture}[Strong conjecture]
$c_k\big(\!\Pol^d(\C^n)\big)$ is of the form $P(B_1,\dots,B_s)$, where $P$ is a polynomial and $B_i$ are binomial coefficients of the form $\binom{d+n+a_i}{n+b_i}$.
\end{conjecture}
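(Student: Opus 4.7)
The plan is to leverage the iterated rising product decomposition \eqref{eq_cpoldcn_asseriesofproducts} and propagate, through the $n-1$ nested summations, an inductive invariant stating that the partial products can be expressed as polynomial combinations of binomials of the target shape. The innermost factor $1+d_1x_1+\cdots+d_nx_n$ trivially satisfies the invariant (constants are binomials of the given form). The inductive step then reduces to a summation closure statement, which will be the heart of the argument.

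First I would apply Theorem \ref{prop:rising} to the innermost rising product $\prod_{d_{n-1}=0}^{L} P(\ldots,d_{n-1},x)$ with $L=d-d_1-\cdots-d_{n-2}$. By Lemma \ref{lemma_tildeMlambdaspolys}, the coefficient is a polynomial in $L$ via the arithmetic specializations $\tilde M_\lambda(L)$, and any polynomial in $L$ expands as a $\Q$-linear combination of binomials $\binom{L+c}{r}$. Substituting back $L=d-d_1-\cdots-d_{n-2}$ produces binomials of the correct ``one-step'' shape. The next step would handle each subsequent outer summation $\sum_{d_j=0}^{d-d_1-\cdots-d_{j-1}}Q$ using the hockey-stick identity $\sum_{k=0}^N\binom{k+a}{b}=\binom{N+a+1}{b+1}-\binom{a}{b+1}$. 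Iterating through the $n-1$ summation levels should increment both slots of the binomials by $1$ at each level, eventually producing binomials of the form $\binom{d+n+a_i}{n+b_i}$.

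The main obstacle is that Theorem \ref{prop:rising} produces not single binomials but polynomial expressions involving \emph{products} of arithmetic specializations, so each summation step must sum products of binomials in the same variable $d_j$, a situation the plain hockey-stick identity does not cover. One must therefore identify a closed class---polynomial expressions with $\Q$-coefficients in binomials $\binom{d-d_1-\cdots-d_{j-1}+a}{(j-1)+b}$, closed under multiplication and under summation over $d_j$---and prove closure. This likely requires Vandermonde--Chu convolution identities and possibly Wilf--Zeilberger creative telescoping to turn products of binomials into manageable sums. A complementary, more conceptual attack that I would pursue in parallel is to interpret the binomials $\binom{d+n+a}{n+b}$ representation-theoretically: $\binom{d+n-1}{n-1}=\dim\Pol^d(\C^n)$ and related binomials arise as dimensions of Schur modules of the standard $\GL(n)$-representation. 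Expressing $c_k(\Pol^d(\C^n))$ plethystically (via $\lambda_y(\Pol^d(\C^n))$ and Adams operations) might let one identify the conjectural form from general decomposition identities, bypassing the combinatorial bookkeeping entirely.
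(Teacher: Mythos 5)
The statement you are trying to prove is stated in the paper as an open conjecture; the authors give no proof of it (they offer only the computed cases $k=1,2$ and further computer experiments as evidence). So there is no argument in the paper to compare against, and the question is whether your sketch could plausibly close the gap. It cannot, for two reasons. The first is the one you yourself flag: the ``closure class'' step --- summing products of binomials in $d_j$ over $d_j$ and staying inside a class of polynomial expressions in binomials of a controlled shape --- is not carried out, and invoking Vandermonde--Chu or WZ telescoping is a hope rather than an argument. As written, the proposal is a plan with an acknowledged hole at its central step.

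The second, more fundamental problem is that your induction fixes $n$ and runs over the $n-1$ nested summations of \eqref{eq_cpoldcn_asseriesofproducts}, so everything it could produce --- the polynomial $P$, the number $s$ of binomials, the shifts $(a_i,b_i)$ --- would depend on $n$. But for \emph{fixed} $n$ the conjecture is essentially vacuous: by Theorem \ref{thrm_cpoldCnispoly} each coefficient of $c_k\big(\!\Pol^d(\C^n)\big)$ is a polynomial in $d$, and any polynomial in $d$ is a $\Q$-linear combination of binomials $\binom{d+n+a}{n+b}$ (as $b$ varies these realize every degree in $d$). The entire content of the Strong conjecture is \emph{uniformity in $n$}: a single $P$ and finitely many pairs $(a_i,b_i)$, independent of $n$, valid for all $n$ simultaneously --- which is why it refines the Weak conjecture that $c_k$ is separately polynomial in $n$. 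An argument whose depth of recursion is $n-1$ and whose degree bounds grow with $n$ does not address this joint $(n,d)$-dependence at all. Your parallel suggestion (interpreting the binomials as dimensions of Schur modules and attacking the problem plethystically) is the more promising direction precisely because it is the only part of the proposal that could conceivably produce an $n$-uniform expression, but it is offered only as a possibility, not developed.
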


\section{Enumerative applications} \label{sec:enum}
In this Section we apply our results on $c\big(\!\Pol^d(\C^n)\big)$ to calculate invariants of
two types of varieties related to degree $d$ projective hypersurfaces and projective $r$-planes
contained in them.

These two types correspond to opposite situations
that can be explained using the following fairly standard diagram, see
\cite[Ch.~12; Ch.~22]{harris1992alggeom} for more details:
Consider the incidence variety with its projections
into the space of degree $d$ hypersufaces and into the space of projective $r$-spaces of $\P^m$:

\begin{equation}\label{tikz_incidencevar}
  \begin{tikzcd}[column sep=0em]
    & \phi:=\left\{ (Z,\Delta) \in \P(\Pol^d(\C^{m+1})) \times \Gr_{r+1}(\C^{m+1}): \Delta \subset 
    Z \right\} \ar[dl,"\pi_{\P}"] \ar[dr,"\pi_{\Gr}"]&
    \\
    \P(\Pol^d(\C^{m+1})) & & \Gr_{r+1}(\C^{m+1})
  \end{tikzcd}
\end{equation}
Then,
either $\pi_{\P}$ is surjective and we can
look at the fiber $\pi_{\P}^{-1}(Z) \subset \Gr_{r+1}(\C^{m+1})$ for a generic hypersuface $Z$
(the variety of $r$-spaces inside $Z$),
or $\pi_{\P}$ maps to a proper subvariety (the variety of degree $d$ hypersurfaces in $\P^m$
  containing an $r$-space) that we can investigate.

From the second projection $\pi_{\Gr}$ one can easily deduce that
\[ \dim(\phi)=\dim\big(\P(\Pol^d(\C^{m+1}))\big)+\delta(d,m,r),\] where
\[ \delta(d,m,r):= (r+1)(m-r)-\binom{r+d}{r} \]
is called the \emph{expected dimension} of the general fiber of $\pi_{\P}$.
The sign of the expected dimension can help to differentiate the two cases:

If $\delta(d,m,r) < 0$ the projection $\pi_{\P}$ cannot be surjective.
For the $d \ge 3$ the contrary also holds: $\delta(d,m,r) \ge 0$ implies that $\pi_{\P}$ is
surjective (with $\delta(d,m,r)$ the actual dimension of its general fibers).
For $d=2$  a general quadric hypersurface contains an $r$-plane precisely if $r$ is at
most half of its dimension.  

 For example, a general quadric in $\P^4$ does not contain any 2-planes, even
though $\delta(2,4,2)=0$.

\subsection{Degree of varieties of hypersurfaces containing linear subspaces}
\label{subsec_hypersurfswithlinsubspaces}
Let us start with the case where $\pi_{\P}$ of \eqref{tikz_incidencevar} is not surjective:
Denote by
\[ \Sigma(d,m,r) \subset \P(\Pol^{d}(\mathbb{C}^{m+1})) \]
the subvariety whose points correspond to degree $d$ hypersurfaces that do contain a
projective $r$-plane.
The study of these varieties was initiated by Manivel. In \cite{manivel1999surleshypersurfaces}
he proved that if $\delta(d,m,r)<0$ and $d \ge 3$, then 
$\Sigma(d,m,r)$ is an irreducible subvariety of codimension $-\delta(d,m,s)$ 
in $\P(\Pol^{d}(\mathbb{C}^{m+1}))$, its generic point corresponds to a degree $d$ hypersurface
that carries a unique $r$-plane and its degree is
 \begin{equation}\label{eq_degreeSigma}
   \deg(\Sigma(d,m,r))= \int_{\Gr_{r+1}(\mathbb{C}^{m+1})} c_{(r+1)(m-r)} (\Pol^d(S)),
 \end{equation}
 where $S \to \Gr_{r+1}(\mathbb{C}^{m+1})$ denotes the tautological bundle.
The $d=2$ case is again an exception: the easiest example is those of quadric plane curves, where
$\delta(2,2,1)=-1$, but a general element of $\Sigma(2,2,1)$ contains exactly two lines.

 Though the proof of degree formula can be found in
 \cite{manivel1999surleshypersurfaces} (or in \cite{ciliberto_zaidenberg2020onfanoschemes}),
 we decided to give an other one
 that uses a general simple fact which doesn't seem to be widely known.

 \begin{proposition}\label{prop:deg-of-embedded-reso}
   Suppose that $M$ is a smooth projective variety of dimension $b$ and $E\to M$ is a subbundle
   of the trivial vector bundle $M\times V$. Suppose that the projection $\pi_V: M\times V\to V$
   restricts to a resolution $E\to Y:=\pi_V(E)$. Then the degree of $\P(Y)\subset \P(V)$ is given
   by
 \begin{equation*}\label{deg-of-embedded-reso}
  \int\limits_{M}\!\! c_b(V/E)=\deg Y=\deg\P Y,
\end{equation*} 
where we abbreviate by $V$ the trivial vector bundle with fiber $V$.
\end{proposition}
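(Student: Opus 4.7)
The plan is to reduce the statement to a Segre class calculation on the projectivization $\P E$. Setting $r := \mathrm{rk}(E)$, the birationality of $\pi_V|_E : E \to Y$ gives $\dim Y = \dim E = b+r$, hence $\dim \P Y = b+r-1 = \dim \P E$. The equality $\deg(Y) = \deg(\P Y)$ is the standard correspondence between the degree of an affine cone in $V$ and the degree of its projectivization in $\P V$, so it is enough to compute $\deg(\P Y)$.

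First I would pass to the projectivized picture: since $E \subset M \times V$ is a subbundle, its fiberwise projectivization $\P E$ embeds into $M \times \P V$, and restricting the second projection yields a morphism $\P E \to \P Y \subset \P V$. Because $\pi_V$ is linear, hence $\mathbb{C}^{*}$-equivariant, in the $V$-factor, the birationality of $\pi_V|_E : E \to Y$ descends to a birational morphism $\P E \to \P Y$. Writing $H := c_1(\mathcal{O}_{\P V}(1))$, the projection formula applied to this birational morphism gives
\[
\deg(\P Y) \;=\; \int_{\P V} [\P Y] \cdot H^{b+r-1} \;=\; \int_{\P E} \bigl(H|_{\P E}\bigr)^{b+r-1}.
\]

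Next I would identify $H|_{\P E}$ intrinsically. Because $E$ is a subbundle of the trivial bundle $M \times V$, the tautological subline of $p^* E$ over $\P E$ (with $p:\P E\to M$) is literally the restriction of the tautological subline on $\P V$; dualizing, $H|_{\P E} = c_1(\mathcal{O}_{\P E}(1))$. The defining pushforward formula for Segre classes then gives
\[
p_*\bigl(c_1(\mathcal{O}_{\P E}(1))^{(r-1)+b}\bigr) \;=\; s_b(E).
\]
Finally, triviality of $V$ combined with the short exact sequence $0 \to E \to V \to V/E \to 0$ yields $c(V/E) = 1/c(E) = s(E)$, so $s_b(E) = c_b(V/E)$. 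Assembling the three steps produces $\deg(\P Y) = \int_M c_b(V/E)$.

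The most delicate point is ensuring that $\P E \to \P Y$ is honestly birational: the resolution $\pi_V|_E$ may behave nontrivially along the zero section of $E$, and one must argue that this pathology disappears after projectivizing. This follows from the $\mathbb{C}^*$-equivariance of $\pi_V$ together with the fact that a birational morphism restricted to the open complement of the zero section remains birational; everything else in the argument is a routine application of the projection formula and Segre class formalism.
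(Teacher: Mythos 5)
Your proof is correct, and it reaches the formula by a route that is parallel to, but genuinely distinct from, the one in the paper. Both arguments pass to the fiberwise projectivization $\P(E)\subset M\times \P(V)$ and both rely on the induced map $\P(E)\to\P(Y)$ being birational; your $\mathbb{C}^*$-equivariance remark is exactly the right way to descend birationality from $E\to Y$ to the projectivized map, and the reduction $\deg Y=\deg \P Y$ is the standard cone--projectivization correspondence. Where the two proofs diverge is in how the top self-intersection of the hyperplane class on $\P(E)$ is evaluated. The paper first computes the full fundamental class $[\P(E)]\in H^*(M\times\P(V))$ as the Euler class of $\Hom(\gamma,V/E)$ by exhibiting $\P(E)$ as the zero locus of a tautological section whose transversality is outsourced to a reference; pushing that class forward to $\P(V)$ then gives not only the degree but the entire class $[\P(Y)]=u^{k-b}\int_M c_b(V/E)$. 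You instead observe that $\mathcal{O}_{\P(V)}(1)$ restricts to $\mathcal{O}_{\P(E)}(1)$ (valid precisely because $E$ is a subbundle of the trivial bundle) and then invoke the defining pushforward identity for Segre classes, $p_*\bigl(c_1(\mathcal{O}_{\P(E)}(1))^{r-1+b}\bigr)=s_b(E)$, together with $s(E)=c(E)^{-1}=c(V/E)$ from the Whitney formula for $0\to E\to V\to V/E\to 0$. This bypasses any transversality argument and is arguably the more elementary computation, at the price of producing only the degree rather than the class $[\P(Y)]$ itself, which the proposition does not require. Both computations are standard and your assembly of them is sound.
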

\begin{proof}
  Let us consider the projectivization of our vector bundles,
  \begin{equation*}\label{eq_incidencevar_dagram}
    \begin{tikzcd}
      \P(E) \ar[r,hookrightarrow] \ar[dr] & M \times \P(V) \ar[r,"\P(\pi_V)"] \ar[d] & \P(V)
      \\
      & M &
    \end{tikzcd}
  \end{equation*}
  and denote by $\gamma\to \P(V)$ the tautological line bundle. The bundle
  $\Hom(\gamma,V/ E) \to M \times \P(V)$ has a tautological section
  defined by $s(m,[v])(x):=x+E_m$ for $x\in \gamma_{[v]}$.
  This section is transversal by a standard argument (see \cite[A.2]{feher2023plucker} for a similar situation)
  to the zero section and $s^{-1}(0)=\P(E)$. This implies that
  \[ [\P(E)\subset M \times \P(V)]=e\big(\Hom(\gamma,V/E)\big)=\sum c_i(V/E)u^{k-i},\]
  where $u=c_1(\gamma^\vee)\in \P V$ is the hyperplane class and $k=\rank(V/E)$.

  Moreover, the restriction of $\P(\pi_V): M \times \P(V) \to \P(V)$ to $\P(E)$ is a resolution
  of $\P(Y)$, therefore
  \[ [\P(Y) \subset \P(V)]=\P(\pi_V)_![\P(E)\subset M \times \P(V)]=
  \P(\pi_V)_!\left(\sum c_i(V/E)u^{k-i}\right)=u^{k-b}\int\limits_{M}\!\! c_b(V/E).\]
\end{proof}

To deduce the degree formula \eqref{eq_degreeSigma} from Proposition
\ref{prop:deg-of-embedded-reso} let us consider the evaluation map
\[ ev:\gr_{r+1}(\C^{m+1}) \times \Pol^d(\C^{m+1}) \to \Pol^d(S^{r+1}), \quad
(\Delta,f) \mapsto f|_\Delta. \]
The projectivization of the projection of its kernel bundle
$E:=\ker(ev) \subset  \gr_{r+1}(\C^{m+1}) \times \Pol^d(\C^{m+1}) $ to $\Pol^d(\C^{m+1})$ is
exactly the projection $\pi_{\P}: \phi \to \P(\Pol^d(\C^{m+1}))$ of the incidence variety in
\eqref{tikz_incidencevar}.
From \cite{manivel1999surleshypersurfaces} we obtain that if $\delta(d,m,r) < 0$ and $d \ge 3$,
the projection $\pi_{\Pol^d(\C^{m+1})}$ restricted to $E$ is a resolution of the cone of
$\Sigma(d,m,r)$. Therefore, we can apply Proposition \ref{prop:deg-of-embedded-reso} to get
\eqref{eq_degreeSigma}.

\medskip

Now notice that the characteristic class $c(\Pol^d(\C^{r+1}))$ evaluates to $c(\Pol^d(S^{r+1}))\in H^*(\Gr_{r+1}(\mathbb{C}^{m+1}))$.
As integration over $\Gr_{r+1}(\mathbb{C}^{m+1})$ returns the coefficient of the volume form
$s_{(m-r)^{r+1}}$,
we get that
\begin{equation*}\label{eq_degSigma_ascoef}
\deg(\Sigma(d,m,r))=\coef(s_{(m-r)^{r+1}}, c(\Pol^d(\mathbb{C}^{r+1})).
\end{equation*}

This, combined with Theorem \ref{cor_cPoldCn_Schurcoeffs}, implies that
\begin{theorem}\label{thm:degSigma_leadingterm}
  Let $\Sigma(d,m,r)$ denote the subvariety of $\P(\Pol^{d}(\mathbb{C}^{m+1}))$ whose points
  correspond to  degree $d$ hypersurfaces in $\P^m$ that contain an $r$-plane.
  Let us fix $m$ and $r$. Then, for big enough $d$'s (i.e. for $d \ge 3$ and $\delta(d,m,r)<0$)
  the degrees $ \deg(\Sigma(d,m,r))$ form a polynomial in $d$, whose leading term is
  \[ \frac{1!\cdot2!\cdots(r-1)!r! }{m!\dots(m-r)!}
  \left( \frac{1}{(r+1)!} \right)^{(r+1)(m-r)} d^{(r+1)^2(m-r)}. \]
\end{theorem}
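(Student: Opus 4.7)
The plan is to combine the identification
\[ \deg(\Sigma(d,m,r))=\coef\bigl(s_{(m-r)^{r+1}},c(\Pol^d(\mathbb{C}^{r+1}))\bigr) \]
established immediately before the theorem with Corollary \ref{cor_cPoldCn_Schurcoeffs} applied to $n=r+1$ and to the rectangular partition $\lambda=(\underbrace{m-r,\dots,m-r}_{r+1})$. The polynomiality in $d$ is then immediate: Corollary \ref{cor_cPoldCn_Schurcoeffs} already tells us that every Schur coefficient $b_\lambda(d)$ of $c(\Pol^d(\mathbb{C}^{r+1}))$ is a polynomial in $d$, so the $d$-polynomiality of $\deg(\Sigma(d,m,r))$ on the range $d\ge 3$, $\delta(d,m,r)<0$ (where Manivel's geometric interpretation of the formula is valid) follows for free.

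What remains is to specialise the leading-term formula of Corollary \ref{cor_cPoldCn_Schurcoeffs}. With $n=r+1$, $l=r+1$, and $\lambda_i=m-r$ for all $i$, the $d$-degree is $n|\lambda|=(r+1)\cdot(r+1)(m-r)=(r+1)^2(m-r)$, and the factor $(1/n!)^{|\lambda|}$ becomes $\bigl(1/(r+1)!\bigr)^{(r+1)(m-r)}$, which matches the second factor in the target expression. The denominator $(\lambda_1+l-1)!(\lambda_2+l-2)!\cdots\lambda_l!$ telescopes into $m!\,(m-1)!\cdots(m-r)!$, matching the stated denominator.

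The only combinatorial step that is not a direct substitution is the evaluation of the Vandermonde-type product in the numerator. Since all $\lambda_i$ are equal, $\prod_{1\le i<j\le r+1}(\lambda_i-\lambda_j+j-i)$ collapses to $\prod_{1\le i<j\le r+1}(j-i)$, and grouping factors by the value $k=j-i$ (which occurs $r+1-k$ times for $k=1,\dots,r$) gives
\[ \prod_{1\le i<j\le r+1}(j-i)=\prod_{k=1}^{r}k^{\,r+1-k}=\prod_{k=1}^{r}k!\,, \]
the last identity being the standard rewriting $\prod_{k=1}^r k!=\prod_{i=1}^{r}i^{\,r+1-i}$ obtained by counting how often each $i\in\{1,\dots,r\}$ appears in $\prod_{k=1}^r k!=\prod_k\prod_{i\le k}i$. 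This produces the numerator $1!\cdot 2!\cdots(r-1)!\,r!$ of the theorem.

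I expect no serious obstacle: the geometric degree formula and the polynomiality/leading-term engine are already in place, so the proof amounts to bookkeeping plus the small identity above. The only mildly delicate point is making sure the combinatorial Vandermonde identity is presented cleanly, and stating explicitly the range of $d$ on which the polynomial agrees with the actual degree (namely $d\ge 3$ with $\delta(d,m,r)<0$, where Manivel's resolution argument recalled in Proposition \ref{prop:deg-of-embedded-reso} ensures that the Schur coefficient really does compute $\deg\Sigma(d,m,r)$).
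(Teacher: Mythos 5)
Your proposal is correct and follows exactly the paper's route: the paper likewise deduces the theorem by combining the identity $\deg(\Sigma(d,m,r))=\coef\bigl(s_{(m-r)^{r+1}},c(\Pol^d(\mathbb{C}^{r+1}))\bigr)$ with Corollary \ref{cor_cPoldCn_Schurcoeffs} specialized to $n=r+1$ and the rectangular partition $(m-r)^{r+1}$. Your explicit verification of the Vandermonde product $\prod_{1\le i<j\le r+1}(j-i)=1!\,2!\cdots r!$ and of the denominator telescoping simply spells out the substitution the paper leaves implicit.
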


\begin{example}
  For $m=3$, $r=1$ and $d \ge 4$ Theorem \ref{thm:degSigma_leadingterm} tells us that
  $\deg(\Sigma(d,3,1))$,
  the degree of the variety of degree $d$ surfaces in $\P^3$ containing a projective line,
  is a degree 8 polynomial in $d$. By interpolating for e.g. $d=0,\dots,8$ the values of
  $\coef(s_{2,2}, c(\Pol^d(\mathbb{C}^{2}))$---see Remark \ref{rmrk_polynomialityfrom0}---,
  we get that
  \begin{equation*}
    \begin{split}
    \deg(\Sigma(d,3,1))&=
    \frac{1}{192}d^{8}+\frac{1}{288}{d}^{6}-\frac{1}{48}d^{5}-\frac{25}{576}d^{4}-
    \frac{1}{16}{d}^{3}+\frac{5}{144}d^2+\frac{1}{12}d\\
    &=\frac{1}{576}d(d-1)(d-2)(d+1)\left( 3d^3+6d^3+17d^2+22d+24 \right).
  \end{split}
  \end{equation*}
\end{example}

\begin{remark} The case of varieties $\Sigma(d,m,m-1)$ is somewhat degenerate: a hypersurface is in $\Sigma(d,m,m-1)$ if and only if it has a linear component. This description provides us with a simple resolution:
 
  The map $\varphi: \P(\Pol^{1}(\mathbb{C}^{m+1})) \times \P(\Pol^{d-1}(\mathbb{C}^{m+1})) \hookrightarrow
  \P(\Pol^{d}(\mathbb{C}^{m+1}))$ induced by multiplication of polynomials provides a resolution of
  $\Sigma(d,m,m-1)$. Such a resolution can be used to calculate the degree as
  \[ \deg(\Sigma(d,m,m-1))=\int_{\P(\Pol^{1}(\mathbb{C}^{m+1})) \times \P(\Pol^{d-1}(\mathbb{C}^{m+1}))}
  \varphi^* c_1^{D}, \]
  where $D=\binom{d+m-1}{m}+m-1$ is the dimension of the domain and $c_1=c_1(\gamma^\vee)$ for $\gamma$
  the tautological line bundle.
  Let $u$ and $v$ denote the first Chern classes of the duals of tautological bundles over the first and
  second factor of the domain.
  As $\varphi^* \gamma^\vee$ is the tensor product of these dual bundles,
  \[ \varphi^* \left( c_1^{D} \right)=(u+v)^{D}=
  \sum_{t=0}^{D} \binom{D}{t} u^t v^{d-t}. \]
  This integration amounts to taking the coefficient of the volume form $u^m v^{D-m}$, hence, substituting
  $D=\binom{d+m-1}{m} +m-1$, we get that
  \[ \deg(\Sigma(d,m,m-1))=\binom{\binom{d+m-1}{m}+m-1}{m}. \]
\end{remark}

\begin{remark} There are several ways to calculate an integral over the Grassmannian, see e.g.
  \cite{ciliberto_zaidenberg2020onfanoschemes}.
  Even our very first description \eqref{eq_cPoldCn_singleproduct} of $c(\Pol^d(\mathbb{C}^{r+1}))$
  can be used to obtain formulas for $\deg(\Sigma(d,m,r))$ that can be calculated via a computer 
  program. However, these formulas don't tell us much about the $d$-dependence of the degree $\deg(\Sigma(d,m,r))$.
  For example,
  in \cite{manivel1999surleshypersurfaces} Manivel, based on computations, claims that fixing
  $m$ and $r$, $\deg(\Sigma(d,m,r))$
depends on $d$ as a degree $2(r+1)(m-r)$ polynomial. Comparing with Theorem
\ref{thm:degSigma_leadingterm}, we can see that this holds only for $r=1$.
The big advantage of having a priori knowledge on the polynomiality and the degree is that we can use interpolation to calculate these polynomials.
\end{remark}

\subsection{Degree of Fano schemes}
\label{sec:deg-of-fano}
Let us turn to the case where $\pi_{\P}$ of \eqref{tikz_incidencevar} is surjective:
The Fano scheme of $r$-planes of a projective variety $Y \subset \P^m$
is defined to be
the subvariety of the Grassmannian $\Gr_{r+1}(\mathbb{C}^{m+1})$ containing the projective
$r$-planes inside $Y$.
The Fano scheme of a hypersurface $Z \subset \P^m$ is just the fiber of $\pi_{\P}$ of
\eqref{tikz_incidencevar} over $Z$.

For a generic degree $d$ polynomial $f$ the corresponding section $\sigma_f$ of $\Pol^d(S)$ (defined by $\sigma_f(V)=f|_V$) is transversal to the zero section, therefore the cohomology fundamental class of its zero locus, the Fano scheme  $F_r(d,m)$ is
\[ \left[ F_r(d,m) \right]=e\big(\Pol^d(S)\big) \]
and, hence,
\begin{equation*}\label{eq_fano-degree}
  \deg(F_r(d,m))=\int_{\Gr_{r+1}(\mathbb{C}^{m+1})}e(\Pol^d(S))c_1^\delta(S^\vee),
\end{equation*}
where $S \to \Gr_{r+1}(\mathbb{C}^{m+1})$ denotes the tautological bundle
and $\delta=\delta(d,m,r)$ is the dimension of the Fano scheme.
Here the degree is defined as the projective degree of the image of the Fano scheme under the
Pl\"ucker embedding.

This formula is well known, and there are several methods to calculate the integrand and the integral as well. 
The novelty here is that we can use results of Section \ref{sec_onclosedformulas} to obtain closed
formulas for the degrees.

$e(\Pol^d(S))=c_{d+1}(\Pol^d(S))$,  so we are not looking for a Chern class independent of $d$. Therefore we cannot use our results on the
polynomiality of the coefficient.
In fact, the degree of Fano schemes of generic hypersurfaces is an example of an enumerative
problem where the $d$-dependence is not polynomial.

\begin{remark}
	For $d=2$ it is proved in \cite{lalat} that
	\[e(\Pol^2(\C^{r+1}))=2^{r+1}s_{(r+1,r,\dots,2,1)}, \]
which implies that $e(\Pol^2(\C^{r+1}))$ is nonzero exactly if $(r+1,r,\dots,2,1)\subset (m-r)^{r+1}$. The cohomology class of a subvariety of the Grassmannian is zero only if it is empty, therefore we gave a cohomological proof of the statement at the previous section: a general quadric hypersurface contains an $r$-plane precisely if $r$ is at
most half of its dimension. 
\end{remark}

\medskip

The integral over $\Gr_{r+1}(\C^{m+1})$ returns the coefficient of the volume form
$s_{(m-r)^{r+1}}$. In the $r=1$ case Theorem \ref{thm-euler} provides closed formulas for the
coefficients of $e(\Pol^d(\C^2))$ in the Schur polynomial basis
from which we can deduce a
closed formula for the degrees of Fano schemes of lines:

\begin{theorem} \label{thm-deg-of-fano-of-lines} For $\delta=2m-d-3\geq0$ the  degrees of Fano schemes of lines are given by
	
\[ \deg(F_1(d,m))= \sum_{j=0}^{\left\lfloor \frac{\delta}{2}\right\rfloor} C(\delta-j,j)e^{d}_{d-m+2-j},
\]
where	
\[C(\delta-j,j)=
\binom{\delta}{j}-\binom{\delta}{j-1}=\frac{\delta!(\delta-2j+1)}{j! (\delta-j+1)!} \]
are the Catalan triangle numbers and $e^{d}_{d-m+2-j}$ was defined in Theorem \ref{thm-euler}.
\end{theorem}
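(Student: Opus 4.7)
The starting point is the integral formula
\[ \deg(F_1(d,m)) = \int_{\Gr_2(\C^{m+1})} e\big(\Pol^d(S)\big)\, c_1(S^\vee)^{\delta} \]
stated just before the theorem. My plan is to expand both factors of the integrand in the Schur basis of $H^*(\Gr_2(\C^{m+1}))$ and then use Poincar\'e duality on the Grassmannian to extract the coefficient of the top class.

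First, I would view the integrand through the Chern roots $x_1,x_2$ of $S^\vee$, turning it into the symmetric polynomial $c_{d+1}(\Pol^d(\C^2))\cdot e_1(x_1,x_2)^{\delta}$; integration over $\Gr_2(\C^{m+1})$ then amounts to extracting the coefficient of the top Schubert class $s_{((m-1)^2)}$. For the first factor I would simply invoke Theorem \ref{thm-euler}, writing
\[ c_{d+1}(\Pol^d(\C^2)) = \sum_{j} e^d_j\, s_{(d+1-j,\, j)}. \]

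For the second factor, I would expand $e_1^{\delta}=s_{(1)}^{\delta}$ by iterated Pieri: the coefficient of $s_{(\delta-k,\, k)}$ is the number of standard Young tableaux of shape $(\delta-k,\, k)$, which by the hook length formula equals $\binom{\delta}{k}-\binom{\delta}{k-1}=C(\delta-k,\, k)$. Hence
\[ e_1^{\delta}=\sum_{k=0}^{\lfloor \delta/2\rfloor} C(\delta-k,\, k)\, s_{(\delta-k,\, k)}. \]
Finally, I would invoke Poincar\'e duality on $\Gr_2(\C^{m+1})$, which pairs $s_{(a,b)}$ with $s_{((m-1)-b,\,(m-1)-a)}$ to give the top class. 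Multiplying the two Schur expansions and reading off the $s_{((m-1)^2)}$ coefficient, only those pairs $(j,k)$ contribute for which $(\delta-k,\, k) = (m-1-j,\, m-d-2+j)$; with $\delta=2m-d-3$ this reduces to a single linear relation linking $j$ and $k$, and the surviving single sum --- after relabelling --- yields the claimed closed formula, with the Catalan triangle numbers coming from the $e_1^{\delta}$ expansion and the $e^d_{\,\cdot}$ coming from Theorem \ref{thm-euler}.

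The principal step requiring care is the index bookkeeping at the end: one must track how the Poincar\'e-duality constraint shifts the subscript of $e^d$ relative to the summation index of the Catalan coefficient, and verify that the range $0\le j\le \lfloor\delta/2\rfloor$ is precisely what remains after other Schur partitions are killed by being non-valid or off-diagonal. Beyond this, no substantive obstacle is expected: the three ingredients --- Theorem \ref{thm-euler}, the Pieri expansion of $s_{(1)}^{\delta}$, and Grassmannian Poincar\'e duality --- are each standard and combine multiplicatively, without interaction.
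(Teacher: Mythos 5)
Your proposal is correct and follows essentially the same route as the paper: the same integral formula, the Schur expansion of $e(\Pol^d(\C^2))$ via Theorem \ref{thm-euler}, the expansion of $c_1^\delta$ with Catalan-triangle coefficients (which the paper identifies as Kostka numbers $K_{(\delta-j,j),1^\delta}$ --- the same SYT counts you obtain from iterated Pieri), and the complementary-partition pairing on $\Gr_2(\C^{m+1})$. The only residual work is the index bookkeeping you flag, which the paper resolves by noting that each $s_{m-1-j,\,m-1-\delta+j}$ pairs with the unique $s_{\delta-j,\,j}$.
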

\begin{example}\label{fano-curve-degree}
For example, the Fano scheme of lines is ($\delta=)$1-dimensional precisely if $d=2m-4$,
and we get that
the degree of the Fano curve is
\begin{multline*}
\deg\left(F_1\left(2m-4,m\right)\right)=\coef\left(s_{\left(m-1,m-2 \right)},
e\left(\Pol^{2m-4}(\C^{2})\right)\right)=
\\[6pt]
\shoveleft{\left( \stir{2m-4}{m-1}-\frac{1}{2}\stir{2m-4}{m-2}\right) (2m-4)^{m}+}
\\
\sum_{k=m-1}^{2m-5}(-1)^{m-1+k} \left( \binom{k}{m-3}
- \binom{k}{m-1}\right)\stir{2m-4}{2m-4-k}(2m-4)^{2m-3-k}.
\end{multline*}
The first values for $m=3,4,5,6,7,\dots$ are $4, 320, 60480, 21518336, 12493096000,\dots$.
\end{example}

\begin{proof}[Proof of Theorem \ref{thm-deg-of-fano-of-lines}]
For each Schur polynomial $s_{m-1-j,m-1-\delta+j}$ in the Schur expansion of $e(\Pol^d(\C^2))$
there is a unique Schur polynomial $s_{\delta-j,j}$ in the Schur expansion of $c_1^\delta$ such that
their product is $s_{(m-1)^{2}}$. This gives that

\begin{equation*}
\deg(F_1(2m-\delta-3,m))=\int_{\Gr_2(\C^{m+1})}e(\Pol^{d}(\C^2))c_1^{\delta}=
\sum_{j=0}^{\left\lfloor \frac{\delta}{2} \right\rfloor}
\coef\left( s_{\delta-j,j},c_1^\delta  \right) e^{d}_{m-1-\delta+j},
\end{equation*}
where
\[e^{d}_{m-1-\delta+j}=\coef\left( s_{m-1-j,m-1-\delta+j}, e(\Pol^{d}(\C^2)) \right)\]
was calculated in Theorem \ref{thm-euler}.
The  coefficients $ \coef\left( s_{\delta-j,j},c_1^\delta  \right)$, Kostka numbers by definition, are known to be Catalan triangle numbers:
\[ \coef\left( s_{\delta-j,j},c_1^\delta  \right)=K_{(\delta-j,j),1^\delta}=C(\delta-j,j)=
\binom{\delta}{j}-\binom{\delta}{j-1}=\frac{\delta!(\delta-2j+1)}{j! (\delta-j+1)!}. \]
In fact, these Kostka numbers were calculated by Schubert as the degrees of the corresponding Schubert varieties. 	
\end{proof}

 \bigskip

\subsection{Euler characteristics of Fano schemes} \label{sec:euler}

For a generic hypersurface the Fano scheme is smooth and its normal bundle is isomorphic to the restriction of $\Pol^d(S)$. Using the Poincar\'e-Hopf theorem we obtain the Euler characteristics

\begin{equation} \label{fano-chi}
  \chi(F_r(d,m))=
  \int_{\Gr_{r+1}(\mathbb{C}^{m+1})}    c(\Gr_{r+1}(\mathbb{C}^{m+1})) \frac{e(\Pol^d(S^{r+1}))}{c(\Pol^d(S^{r+1}))}.
 \end{equation}
 
 This integral can be calculated for any given values of the parameters, but we don't expect a closed formula. But for certain families we can give closed formulas. We want to use our result Theorem \ref{thm-euler}, so we fix $r=1$: study Fano schemes of lines. Similarly to the degree calculations we can fix $\delta$, the dimension of the Fano scheme. For $\delta$ at most $3$, we need the first three Chern classes of $\Pol^d(\C^{2})$---these are given in Remark \ref{rmrk_c3ofpoldc2}---and the first 
three Chern classes of $\Gr_{2}(\mathbb{C}^{m+1})$.

\subsubsection{Total Chern classes of Grassmannians} The tangent space
$T(\gr_k(\C^n))$ of the Grassmannian is isomorphic to $\Hom(S,\C^n/S)$, which implies that topologically $T(\gr_k(\C^n))\oplus\Hom(S,S)=\Hom(S,\C^n)$. Therefore, we have
\begin{equation*}\label{cgr}
c(\gr_k(\C^n))=\frac{\prod\limits_{i=1}^{k}(1+x_i)^n}{\prod\limits_{i=1}^{k}\prod\limits_{j=1}^{k}(1+x_j-x_i)},
  \end{equation*}
\smallskip

where $x_i$ are the Chern roots of $S^\vee$, i.e. $c_j(S^\vee)=\sigma_j(x_1,\dots,x_k)$. 

\begin{example}\label{key} For $k=2$ we have
	
	\[c(\gr_2(\C^n))=\frac{(1+c_1+c_2)^n}{1-c_1^2+4c_2},\]
	implying that	
	
	\[c_1(\gr_2(\C^n))=nc_1,\ \ \ \  c_2(\gr_2(\C^n))=
	\frac{1}{2}(n^2-n+2)s_2+\frac{1}{2}(n^2+n-6)s_{1,1} \]
	and
	\[c_3(\gr_2(\C^n))=\frac{1}{3}n(n^2-7)s_{2,1}+
	\frac{1}{6}n(n^2-3n+8)s_{3}. \]
	\end{example}
	
For $\delta=1$, when the Fano scheme is a curve, we have $d=2m-4$ and \eqref{fano-chi} specializes to
\begin{equation}\label{fano-chi-delta1}
  \chi(F_1(2m-4,m))=e^{2m-4}_{m-2}\left(m+1-\binom{2m-3}{2}\right),
\end{equation}
using Remark \ref{rmrk_c3ofpoldc2} and Example \ref{key}.
Compared with the degree calculated in Example \ref{fano-curve-degree} we can see that \[\chi(F_1(2m-4,m))=\left(m+1-\binom{2m-3}{2}\right)\deg(F_1(2m-4,m)).\]
This connection was noticed in \cite{hiep2017numericalinvariantsfanoschemes}.

For $\delta=2$, when the Fano scheme is a surface, we have $d=2m-5$ and \eqref{fano-chi} 
specializes to

\begin{equation}\label{fano-chi-delta2}
\begin{split}
\chi(F_1(2m-5,m))=\,&
e^{2m-5}_{m-2}(2{m}^{4}-20{m}^{3}+67{m}^{2}-86m+36)    \\
+\,&e^{2m-5}_{m-3}(2{m}^{4}-\frac {56{m}^{3}}{3}+59{m}^{2}-{\frac{208m}{3}}+23).
\end{split}
\end{equation}
The Euler characteristics of Fano surfaces of lines was calculated in \cite[ex. 5.2]{ciliberto_zaidenberg2020onfanoschemes}, the novelty here is that we give a closed formula. Similarly one can obtain closed formulas for $\chi(F_1(2m-3-\delta,m))$ for any given $\delta$.

\begin{remark}The main obstacle to find further closed formulas is the lack of closed formulas for $e(\Pol^d(\C^{r+1}))$ for $r>1$. The other constituents of \eqref{fano-chi} can be calculated via interpolation using the polynomial properties of $c_i(\Pol^d(\C^{r+1}))$ and $c_i(\gr_{r+1}(\C^{m+1}))$.

\end{remark}

\appendix
\section{Proof of Theorem \ref{thrm_23_345thrm}}
\label{sec_23_345thrm}
In this Section we will use our theory of rising products and Stirling coefficients to prove
\medskip

\noindent\textbf{Theorem \ref{thrm_23_345thrm}.}
\emph{
  Let
  \[ c( \Pol^d(\mathbb{C}^n) )=\sum_{\nu} g_\nu(d) e_\nu(x_1,\dots,x_n). \]
  Then for every $\nu=\left( 1^{H_1},\dots,n^{H_n} \right)$
  \begin{equation*}
    \deg\left( g_\nu(d) \right) \le nH_1+(n+1)H_2+\dots+(2n-1)H_n.
  \end{equation*}
}

\medskip

The main difficulty is that although 
\[ c\big(\!\Pol^d(\C^n)\big)= \prod_{\substack{(i_1,\dots,i_n) \\ i_1+\dots+i_n=d}}
\left( 1+i_1 x_1+\dots+i_n x_n \right) \in \Q[d][[x_1,\dots,x_n]]^{S_n}, \]
the individual terms
of this factorization are in general not symmetric, hence cannot be written in the elementary
symmetric polynomial basis. To remedy this problem, we will group these terms
corresponding to orbits of the $S_n$-action.
Much like when we grouped ``opposite'' terms of $c\big(\!\Pol^d(\C^2)\big)$ in the proof of Proposition
\ref{prop_cpoldc2_elemsymmleadingterm}.

The problem is that the $S_n$-orbits of the terms of $c\big(\!\Pol^d(\C^n)\big)$ are much more diverse than
those for $n=2$.
This makes this proof rather long, so we decided to divide it into several parts.

\subsection{A factorization of \texorpdfstring{$c\big(\!\Pol^d(\C^n)\big)$}{c(Pold(Cn))} corresponding to orbit types}
Let us denote the
$d$-fold dilation of the standard $(n-1)$-simplex by
\[ d\Delta^{n-1}:=\left\{ \left. (i_1,\dots,i_n) \in \R^n \ \right| \sum_{t=1}^n i_t=d \text{ and } i_t \ge
  0 \text{ for } t=1,\dots,n \right\}. \]
Its integer lattice points, $d\Delta^{n-1} \cap \Z^{n}$
index the terms of
\[ c(\Pol^d(\mathbb{C}^n)) = \prod_{(i_1,\dots,i_n) \in d\Delta^{n-1} \cap \Z^{n}}
 \left( 1+i_1 x_1+\dots+i_n x_n \right). \]

 The natural group action of $S_n$ on $\Z^n$ restricts to $d\Delta^{n-1} \cap \Z^{n}$.
Its orbits correspond to
length $n$ weak partitions $(d_1, d_2,\dots, d_n)$ of $d$.

Note that (for the sake of Proposition \ref{prop_enumerate_orbits})
the partitions indexing these orbits will chosen to be increasing.
To differentiate them from the decreasing ones occuring in the rest of the paper we will refer to
them as \emph{increasing weak partitions}.

For each orbit the product of the terms of $c(\Pol^d(\mathbb{C}^n))$ corresponding to the lattice points in
the orbit can be expressed in elementary symmetric polynomials $e_1,\dots,e_n$ in $x_1,\dots,x_n$.
For any increasing weak partition $(d_1,\dots,d_n) \vdash d$, let
\begin{equation*}\label{eq_orbitterm}
 p_{(d_1,\dots,d_{n})}(e_1,\dots,e_n):=
  \left. \prod_{(i_1,\dots,i_n) \in S_n \cdot (d_1,\dots,d_n)}\left( 1+i_1 x_1+\dots+i_n x_n \right)
    \right|_{\substack{x_1+\dots+x_n \mapsto e_1 \\ x_1x_2+\dots+x_{n-1}x_n \mapsto e_2 \\ \dots}}.
 \end{equation*}
E.g. for $n=3$ and $(d_1 , d_2 , d_3)=(1,1,4) \vdash d=6$
\begin{multline*}
p_{(1,1,4)}(e_1,e_2,e_3)=
\\
  \left( 1+x_1+x_2+4x_3 \right)\left( 1+x_1+4x_2+x_3 \right)(1+4x_1+x_2+x_3)
  \Big|_{\substack{x_1+x_2+x_3 \mapsto e_1 \\ x_1x_2+x_1x_3+x_2x_3 \mapsto e_2 \\ x_1x_2x_3 \mapsto e_3}}
=\\
1+6e_1+9e_1^2+9e_2+4e_1^3+9e_1e_2+27e_3.
\end{multline*}

\medskip

The polynomials $p_{d_1,\dots,d_n}(e_1,\dots,e_n)$ may be quite different from orbit to orbit
(even their $\{e_1,\dots,e_n\}$-degree can vary).
We will group the orbits---according to the multiciplities of the members of their corresponding
increasing weak partitions---such that on each group we have a uniform description
of the $p_{d_1,\dots,d_n}(e_1,\dots,e_n)$'s:

For each \emph{orbit-type-vector} $u=(u_1,\dots,u_s)$  with $u_t \in \N_{>0}$ and $\sum_{i=1}^s u_i=n$, let
\[ O_u(d):= \left\{ \left( d_1^{u_1},\dots,d_s^{u_s} \right) \vdash d : 0 \le d_1 < \dots < d_s \right\} \]
denote a (possibly empty) subset of orbits.
Elements of $O_u(.)$ we will refer to as \emph{increasing weak partitions of type $u$}.
The corresponding orbits we will call \emph{orbits of type $u$}.

\begin{example}
For $n=4$ and $d=8$ the $O_u(8)$'s are
\begin{align*}
  &O_{(1,1,1,1)}(8)=\left\{ (0,1,2,5),(0,1,3,4) \right\}, \\
  &O_{(2,1,1)}(8)=\left\{ (0,0,1,7),(0,0,2,6),(0,0,3,5),(1,1,2,4) \right\}, \\
  &O_{(1,2,1)}(8)=\left\{ (0,1,1,6),(0,2,2,4),(1,2,2,3) \right\}, \\
  &O_{(1,1,2)}(8)=\left\{ (0,2,3,3) \right\}, \\
  &O_{(2,2)}(8)=\left\{ (0,0,4,4), (1,1,3,3) \right\}, \\
  &O_{(3,1)}(8)=\left\{ (0,0,0,8),(1,1,1,5) \right\}, \\
  &O_{(1,3)}(8)=\emptyset, \\
  &O_{(4)}(8)=\left\{ (2,2,2,2) \right\}. 
\end{align*}
\end{example}

This grouping of orbits gives us a grouping of terms of $c\big(\!\Pol^d(\C^n)\big)$:
For $d \in \N$ and orbit-type-vector $u=(u_1,\dots,u_s)$ let
\begin{equation*}
  R_u(d,e_1,\dots,e_n):=\prod_{(d_1,\dots,d_n) \in O_u(d)} p_{(d_1,\dots,d_n)}(e_1,\dots,e_n).
\end{equation*}
We will call these products \emph{orbit-type-terms}.
Then 
\begin{equation}\label{eq_cPold_intoorbittypes}
c(\Pol^d(\mathbb{C}^n))=\prod_{\substack{u=(u_1,\dots,u_s) \\ |u|=n}}
R_u(d,e_1,\dots,e_n).
\end{equation}
Note that this factorization does not depend on $d$.

\bigskip

To verify the upper bound for the degrees of the coefficients $q_{\nu}(d) \in \Q[d]$ of
$c\big(\!\Pol^d(\C^n)\big)=\sum_\nu q_{\nu}(d) e_\nu$ we will investigate the $d$-dependence of
the orbit-type-terms $R_u(d,e_1,\dots,e_n)$.

Let us abbreviate by $e$ the list of elementary symmetric polynomials $e_1,\dots,e_n$.
Using our theory of rising products, we will---in Section \ref{sec_factorsarequasipolynomials}---show that
the orbit-type-terms
$R_u(d,e)$ 
are \emph{quasi-formal power series}:
we will prove that there exists a period $M(u) \in \N_{>0}$ and
\emph{constituents}
\[ R_{u,\overline{q}_{M(u)}}(d,e) \in \Q[d][[e]] \quad  \left(\overline{q}_{M(u)} \in \Z/M(u)\Z\right)\]
such that
$R_u(d,e)=R_{u,\overline{q}_{M(u)}}(d,e)$ if $\overline{d}_{M(u)}=\overline{q}_{M(u)}$
(where $\overline{d}_{M(u)} \in \Z/M(u)\Z$ denotes the congruence class of $d$).

The reason for writing $c\big(\!\Pol^d(\C^n)\big) \in \Q[d][[e]]$ as a complicated product
\eqref{eq_cPold_intoorbittypes} of
orbit-type-terms is that we can establish a (universal) upper bound for the degrees of the
coefficients of their constituents:
Let
\[ R_{u,\overline{q}_{M(u)}}(d,e)=:\sum_H R_{u,\overline{q}_{M(u)},H}(d)e^H. \]
Then for $u=(u_1,\dots,u_s)$ we have
\[ \deg(R_{u,\bar{q}_{M(u)},H}(d)) \le sH_1+(s+1)H_2+\dots+(s+n-1)H_n
\quad \text{ for every } \bar{q}_{M(u)} \in \Z/M(u)\Z.  \]
 
This way, we will---in Section \ref{see_subdivideN}---be able to subdivide $\left\{ d \in \N \right\}$
into conjugacy classes, each on which we can describe $c\big(\!\Pol^d(\C^n)\big)=\sum_\nu q_{\nu}(d) e_\nu$ as
a finite product of the $R_{u,\bar{q}_{M(u)}}(d,e)$'s whose coefficients have degrees of known
upper bounds.
Then
the following straightforward Lemma \ref{lemma_degrees4products} will provide an
upper bound for $\deg(q_\nu(d))$.

\begin{lemma}\label{lemma_degrees4products}
Let
\[ P(d,e)=\sum_{H}P_H(d) e^H \in \mathbb{Q}[d][[e]] \text{ and } Q(d,e)=\sum_{H}Q_H(d) e^H \in
\mathbb{Q}[d][[e]], \]
and assume that there exists a linear form $W: \Z^{\infty} \to \Z$ such that for every $H \in \N^{\infty}$
$\deg(P_H(d)), \deg(Q_H(d)) \le W(H)$. Then for the coefficients $R_H(d) \in \Q[d]$ in the product
\[ P \cdot Q (d,e)= \sum_{H} R_H(d) e^H \in \mathbb{Q}[d][[e]] \]
their degrees are bounded by the same linear form: $\deg(R_H(d)) \le W(H)$.
\end{lemma}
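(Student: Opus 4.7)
The plan is to prove this directly from the Cauchy product formula and the linearity of $W$. First I would expand the coefficient $R_H(d)$ of the product as
\[ R_H(d) = \sum_{H_1 + H_2 = H} P_{H_1}(d)\, Q_{H_2}(d), \]
where the sum runs over pairs $(H_1, H_2) \in \mathbb{N}^\infty \times \mathbb{N}^\infty$ satisfying $H_1 + H_2 = H$. Since $H$ has finite support, this is a finite sum, so $R_H(d) \in \mathbb{Q}[d]$ is well defined.

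Next I would bound the degree of each summand. For any pair $(H_1, H_2)$ in the sum, the standard degree inequality for a product of polynomials gives
\[ \deg\bigl(P_{H_1}(d)\, Q_{H_2}(d)\bigr) \le \deg\bigl(P_{H_1}(d)\bigr) + \deg\bigl(Q_{H_2}(d)\bigr) \le W(H_1) + W(H_2). \]
By the linearity of $W$ (which is assumed since $W: \mathbb{Z}^\infty \to \mathbb{Z}$ is a linear form), we have $W(H_1) + W(H_2) = W(H_1 + H_2) = W(H)$.

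Finally, the degree of a finite sum of polynomials is at most the maximum of the degrees of its summands, so
\[ \deg(R_H(d)) \le \max_{H_1 + H_2 = H} \deg\bigl(P_{H_1}(d)\, Q_{H_2}(d)\bigr) \le W(H), \]
which is the desired bound. There is no real obstacle here; the statement is essentially a formal consequence of the Cauchy product and the additivity of the linear form $W$, and the proof requires no machinery beyond the elementary degree inequality for products and sums of polynomials.
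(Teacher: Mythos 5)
Your proof is correct and is exactly the argument the paper has in mind: the paper calls this lemma ``straightforward'' and omits the proof entirely, and your Cauchy-product expansion combined with additivity of the linear form $W$ and the elementary degree bounds for products and finite sums is precisely the intended justification. Nothing is missing.
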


\begin{remark}
  Quasi-formal power series is a term we invented based on the well-established notion of
  quasi-polynomials.

  The most well-known example of a quasi-polynomial is probably the Ehrhart quasi-polynomial
  calculating the number of points in $dP \cap \Z^n$ for a rational convex $n$-polytope
  $P$ \cite{erhart1967polyedres}.
  More aligned with our setup, 
  the theory of P-partitions (see \cite[Thm.~4.5.8]{stanley1997enumerative_vol1}) or
  Stapledon's equivariant Ehrhart theory \cite{stapledon2011equivehrhart} shows that
  the number of orbits of the $S_n$-action on $d\Delta^{n-1} \cap \Z^{n}$ and 
  the number of those orbits which are of type $(1^n)$ are also quasi-polynomials in
  $d$.
  
  We did not find a way to exploit these results to prove the quasi-formal power
  series property of the orbit-type-terms.
  Rather, our method uses an explicit enumeration of the orbits of $O_u(d)$,
  Proposition \ref{prop_enumerate_orbits}.
  This naturally implies that for any orbit-type-vector $u$ the number of increasing weak
  partitions in $O_u(d)$ is a 
  quasi-polynomial in $d$, see Remark \ref{rmrk_numberoftypeuorbits_quasipol}.
\end{remark}

\subsection{The orbit-type-terms of \texorpdfstring{$c\big(\!\Pol^d(\C^n)\big)$}{c(Pold(Cn))} are quasi-formal power series}
  \label{sec_factorsarequasipolynomials}
  Given an orbit-type-vector $u=(u_1,\dots,u_s)$
  and any increasing weak partition $(d_1^{u_1},\dots,d_s^{u_s})$ of this type
  the $S_n$-action on $S_n \cdot (d_1^{u_1},\dots,d_s^{u_s})$ factors through the free quotient
  group action of $S_n/S_{u_1} \times \dots \times S_{u_s}$.

As a consequence, there is a uniform description of the terms $p_{(d_1^{u_1},\dots,d_s^{u_s})}(e)$
that works for every $(d_1^{u_1},\dots,d_s^{u_s})$ of type $u$:
there exists a polynomial 
$P_u(d_1,\dots,d_s,e) \in \Q[d_1,\dots,d_s,e]$
such that for every $(d_1^{u_1},\dots,d_s^{u_s}) \in O_u(d)$
\[ P_u(d_1,\dots,d_s,e)=p_{(d_1^{u_1},\dots,d_s^{u_s})}(e) .\]
We will call these polynomials \emph{orbit-terms}.
E.g. for $n=4$ and $u=(2,2)$ the corresponding orbit-term is
\begin{multline*}\label{eq_22orbittermexample}
  P_{(2,2)}(d_1,d_2,e_1,e_2,e_3,e_4):=
  p_{\left(d_1^{2},d_2^{2}\right)}(e_1,e_2,e_3,e_4)=\\
\begin{aligned}
 & (1+d_1x_1+d_1x_2+d_2x_3+d_2x_4)(1+d_1x_1+d_2x_2+d_1x_3+d_2x_4)\\
 & (1+d_1x_1+d_2x_2+d_2x_3+d_1x_4)(1+d_2x_1+d_1x_2+d_1x_3+d_2x_4)\\
 & (1+d_2x_1+d_1x_2+d_2x_3+d_1x_4)(1+d_2x_1+d_2x_2+d_1x_3+d_1x_4)
  \Big|_{\substack{x_1+\dots+x_4 \mapsto e_1 \\ \dots}}=
  \end{aligned}\\
  1+(3d_1+3d_2)e_1+(3d_1^2+9d_1d_2+3d_2^2)e_1^2+(d_1^2-4d_1d_2+d_2^2)e_2+\dots\\
  +(-d_1^6+2d_1^5d_2+d_1^4d_2^2-4d_1^3d_2^3+d_1^2d_2^4+2d_1d_2^5-d_2^6)e_1^2e_4
  \end{multline*}

A straightforward observation that will be important for our degree calculations is that
for every orbit-type-vector $u=(u_1,\dots,u_s)$ and exponent vector $H=(H_1,\dots,H_n)$
\begin{equation}\label{eq_orbittermdegreeestimate}
\deg\left(P_{u,H}(d_1,\dots,d_s)\right)=H_1+2H_2+\dots+nH_n,
\end{equation}
where
\[ P_{u}(d_1,\dots,d_s,e)=\sum_H P_{u,H}(d_1,\dots,d_s) e^H. \]

\medskip

We want to investigate the $d$-dependence of the orbit-type-terms $R_{u}(d,e)$ analogously to
how we did it for $c(\Pol(\C^n))$: in the proof of Proposition \ref{prop_cPoldCn_monomsymm} 
we wrote $c(\Pol(\C^n))$ as an $(n-1)$-long sequence of rising products
\eqref{eq_cpoldcn_asseriesofproducts} that we could analyze product-by-product.
Hence, our goal is to obtain an expression for the orbit-type-terms, Corollary
\ref{cor_Ruasproducts}, similar to \eqref{eq_cpoldcn_asseriesofproducts}.
To do so, we need to enumerate orbits of $O_u(d)$.

\begin{proposition}\label{prop_enumerate_orbits}
  For every orbit-type-vector $u=(u_1,\dots,u_s)$ 
  there are linear polynomials $ F_{j}(d,t_1,\dots,t_{j-1}) \in \Z\left[d,t_1,\dots,t_{j-1} \right]$ and integers
  $N_j \in \N_{>0}$ $(j=1,\dots,s-1)$, such that for every $d \in \N$
  the elements of $O_u(d)$ can be listed as follows
  \begin{align*}
    &\left\{  \left( d_1^{u_1},\dots,d_s^{u_s} \right) : \sum_{j=1}^s u_j d_j =d \text{ and }
    d_j < d_{j+1} \right\}=\\
    &\begin{aligned}
      \Bigg\{ & \Big( t_1^{u_1},
	  \dots,
	  (t_1+t_2+\dots+t_{s-1}+s-2)^{u_{s-1}},
	\frac{1}{u_s}\big( d-\sum_{j=1}^{s-1} u_j(t_1+\dots+t_{j}+j-1) \big)^{u_s} \Big) : \\
	& \quad t_1=0,\dots,\left\lfloor F_1(d)/N_1 \right\rfloor,\\ 
	&\quad t_j=0,\dots,\left\lfloor F_j(d,t_1,\dots,t_{j-1})/N_j \right\rfloor, \\ 
	&\quad t_{s-1}=0,\dots,\left\lfloor F_{s-1}(d,t_1,\dots,t_{s-2})/N_{s-1}  \right\rfloor, \\ 
	& \quad u_s \mid  d-\sum_{j=1}^{s-1} u_j(t_1+\dots+t_{j}+j-1)
      \Bigg\}.
    \end{aligned}
  \end{align*}
\end{proposition}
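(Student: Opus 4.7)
The plan is to parametrize $O_u(d)$ by gap variables. Given an increasing weak partition $(d_1^{u_1},\dots,d_s^{u_s}) \in O_u(d)$, set $t_1 := d_1$ and $t_j := d_j - d_{j-1} - 1$ for $j=2,\dots,s-1$. The condition $0\le d_1 < d_2 < \cdots < d_{s-1}$ translates exactly into $t_1,\dots,t_{s-1}\in \mathbb{N}$, and conversely any such tuple recovers $d_j = t_1+\cdots+t_j+(j-1)$ for $j < s$, matching the formula in the proposition. The constraint $\sum_j u_j d_j = d$ then forces
\[ d_s = \tfrac{1}{u_s}\big(d - \textstyle\sum_{j=1}^{s-1} u_j(t_1+\cdots+t_j+j-1)\big), \]
so the map is a bijection between $O_u(d)$ and tuples $(t_1,\dots,t_{s-1})\in\mathbb{N}^{s-1}$ satisfying (a) the divisibility $u_s \mid d - \sum u_j(t_1+\cdots+t_j+j-1)$, which guarantees $d_s\in\mathbb{Z}$, and (b) the remaining strict inequality $d_s > d_{s-1}$, which (since $d_{s-1}\ge 0$) automatically implies $d_s \ge 0$.

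The heart of the proof is converting (b) into the sequence of upper bounds on the $t_j$'s. Set $M_j := u_j+u_{j+1}+\cdots+u_s$. Unwinding $d_s - d_{s-1} \ge 1$ and multiplying by $u_s$ yields a linear inequality of the form
\[ M_{s-1}\, t_{s-1} \le F_{s-1}(d,t_1,\dots,t_{s-2}), \]
with $F_{s-1}$ a linear polynomial in its arguments and integer coefficients, so $t_{s-1} \le \lfloor F_{s-1}/N_{s-1}\rfloor$ with $N_{s-1} := M_{s-1}$. For $j < s-1$ I would use the following monotonicity observation: raising $t_l$ by $1$ for any $l\in\{j+1,\dots,s-1\}$ increases every $d_{l},d_{l+1},\dots,d_{s-1}$ by $1$, and a direct bookkeeping gives $\Delta\bigl(u_s(d_s-d_{s-1})\bigr) = -M_l < 0$. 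Consequently, for fixed $(t_1,\dots,t_j)$ the maximal value of $t_j$ admitting \emph{some} nonnegative extension $(t_{j+1},\dots,t_{s-1})$ with $d_s>d_{s-1}$ is achieved precisely when all later $t_l$ vanish. Substituting $t_{j+1}=\cdots=t_{s-1}=0$ into $d_s > d_{s-1}$ produces an analogous constraint $M_j\, t_j \le F_j(d,t_1,\dots,t_{j-1})$, defining $N_j := M_j$ and $F_j$ linear with integer coefficients.

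Finally I would verify that the sequential bounds $0 \le t_j \le \lfloor F_j(d,t_1,\dots,t_{j-1})/N_j\rfloor$ \emph{together with} the divisibility condition are both necessary and sufficient. Necessity of each individual bound follows from the monotonicity step above, applied to the actual (nonzero) later $t_l$'s. For sufficiency, observe that once all $t_l$'s lie in their stated ranges, in particular the bound at $j = s-1$ is met, which is the same as $d_s > d_{s-1}$; together with the divisibility this forces $d_s \in \mathbb{Z}_{\ge 0}$ and $d_s > d_{s-1}$, so the resulting tuple is a bona fide element of $O_u(d)$. The main obstacle is not conceptual but bookkeeping: carrying the linear expressions cleanly through the cascade so that $F_j$ depends only on $(d,t_1,\dots,t_{j-1})$ and confirming that the simple substitution $t_{j+1}=\cdots=t_{s-1}=0$ really does extract the tightest constraint on $t_j$ — which is exactly what the $\Delta = -M_l$ identity guarantees.
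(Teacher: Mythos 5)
Your proposal is correct and follows essentially the same route as the paper: the same gap-variable substitution $d_j = t_1+\dots+t_j+j-1$, the same moduli $N_j = u_j+\dots+u_s$, and the same bounds $F_j$ obtained by observing that the single surviving inequality $d_s > d_{s-1}$ is tightest when the later $t_l$ vanish (the paper phrases this as nonemptiness of the tail orbit set $O_{u_{\ge j+1}}$, whose minimal configuration is exactly the all-zero-gap one). The only cosmetic difference is that the paper organizes the bookkeeping as a recursion on suffixes of $u$ via the quantities $D_j$, whereas you argue directly from the monotonicity identity $\Delta\bigl(u_s(d_s-d_{s-1})\bigr)=-M_l$.
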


\begin{proof}
  In the $s=1$, $u=(n)$ case the Proposition is trivial, 
  \[ O_u(d)=\left\{ \left( \frac{d}{n}^{n}\right) : n \mid d \right\}, \]
  so let us assume that $s>1$.

  For every $j=1,\dots,s$ let
  \[ u_{\ge j}:=(u_j,\dots,u_s) \text{ and } N_j:=\left| u_{\ge j} \right|. \]
  Then for every $d, t_1,\dots, t_s \in \N$---
  substracting the vector $\left( t_1^{u_1},\left( t_1+1 \right)^{u_2+\dots+u_s} \right)$, we see that---
    \begin{multline}\label{eq_Oudequivalence1}
    \left( t_1^{u_1},t_1+t_2+1^{u_2},\dots,t_1+\dots+t_s+s-1^{u_s} \right) \in
    O_u(\underbrace{d}_{D_1(d)})
    \Longleftrightarrow
    \\
    \left( t_2^{u_2},t_2+t_3+1^{u_3},\dots,t_2+\dots+t_s+s-2^{u_s} \right) \in
    O_{u_{\ge 2}}\Big(\underbrace{d-t_1 \sum_{i=1}^s  u_i  -\sum_{i=2}^s u_i}_{D_2(d,t_1)}\Big).
  \end{multline}
  If $s>2$, we can continue in a similar fashion: substracting the vector
  $\left( t_{j-1}^{u_{j-1}}, t_{j-1}+1 ^{u_j+\dots+u_s}\right)$ from the element of
  $O_{u_{\ge j-1}}(D_{j-1}(d,t_1,\dots,t_{j-2}))$ in the previous step, we get
  that for every $d,t_1,\dots,t_s \in \N$ and $2 \le j \le s$, 
  \eqref{eq_Oudequivalence1} is further equivalent to
  \begin{equation}\label{eq_Oudequivalence2}
    \left( t_{j}^{u_j},\dots,t_j+\dots+t_s+s-j^{u_s}\right) \in
    O_{u_{\ge j}}\Big( D_j(d,t_1,\dots,t_{j-1}) \Big) ,
  \end{equation}
  where $D_j(d,t_1,\dots,t_{j-1}):=D_{j-1}(d,t_1,\dots,t_{j-2})-N_{j-1}t_{j-1}
  -N_j$.
  In particular, from the $j=s$ case we obtain that for every $d \in \N$
  \begin{equation*}
    O_u(d)=\left\{ \left( t_1^{u_1},\dots,t_1+\dots+t_s+s-1^{u_s} \right) :
    t_1,\dots,t_s \in \N \text{ and }
  \left( t_s^{u_s} \right) \in O_{u_{\ge s}}(D_s(d,t_1,\dots,t_{s-1})) \right\}.
  \end{equation*}

  As $u_{\ge s}=(u_s)$, $t_s=D_s(d,t_1,\dots,t_{s-1})/u_s$ for every
  $(t_1^{u_1},\dots,t_1+\dots+t_{s}+s-1^{u_s}) \in O_u(d)$. This leads to our final description,
  \begin{equation*}
    O_u(d)=\left\{ \left( t_1^{u_1},\dots,t_1+\dots+t_s+s-1^{u_s} \right) :
      t_1,\dots,t_{s-1} \in \N \text{ and }
      t_s=\frac{D_s(d,t_1,\dots,t_{s-1})}{u_s} \in \N
    \right\}.
  \end{equation*}
  Note that, by definition,
  \[ D_s(d,t_1,\dots,t_{s-1})=
    d-\sum_{j=1}^{s-1}\left( u_j(t_1+\dots+t_j+j-1)\right) -u_s(t_1+\dots+t_{s-1}+s-1)
  \]
  therefore $u_s \mid D_s(d,t_1,\dots,t_{s-1})$ is equivalent to the divisibility condition
  of the Proposition.

  \medskip

  In the $s>2$ case we can complement the above description,
  $t_s=D_s(d,t_1,\dots,t_{s-1})/u_s \in \N$ with the ranges for the possible values of the $t_j$'s: 
  As we have discussed, $(t_1^{u_1},\dots,t_1+\dots+t_{s}+s-1^{u_s}) \in O_u(d)$
  implies \eqref{eq_Oudequivalence2}. In particular,
  $O_{u_{\ge j+1}}(D_{j+1}(d,t_1,\dots,t_j)) \neq \emptyset$ for every $j=1,\dots,s-1$.
  As $\left( 0^{u_{j+1}},1^{u_{j+2}},\dots,s-(j+1)^{u_s} \right)$ is the smallest possible
  increasing weak partition of orbit-type $u_{\ge j+1}$,
  this gives that
  \begin{multline*}
    D_{j+1}(d,t_1,\dots,t_j)=
    D_j(d,t_1,\dots,t_{j-1})-N_j t_j - N_{j+1} \ge
    \sum_{i=j+1}^s u_i(i-(j+1)) \Longleftrightarrow
    \\
    \left\lfloor \frac{D_j(d,t_1,\dots,t_{j-1}) - \sum_{i=j}^s u_i(i-j)}{N_j} \right\rfloor \ge
    t_j.
  \end{multline*}
  Introducing 
  \[ F_j(d,t_1,\dots,t_{j-1}):=D_j(d,t_1,\dots,t_{j-1}) - \sum_{i=j}^s(i-j) u_i \quad
  (j=1,\dots,s-1) \]
  for the numerators, we finish the proof.
\end{proof}

\begin{remark}\label{rmrk_Fsdef}
  Given an orbit-type-vector $u=(u_1,\dots,u_s)$ let
  \[ F_s(d,t_1,\dots,t_{s-1}):=
  d-\sum_{j=1}^{s-1} u_j(t_1+\dots+t_{j}+j-1) -u_s (t_1+\dots+t_{s-1}+s-1). \]
  Recall, that this polynomial is the same as $D_s(d,t_1,\dots,t_{s-1})$ in the proof of 
  Proposition \ref{prop_enumerate_orbits}, and hence, as we have mentioned there,
  \[ u_s \mid  d-\sum_{j=1}^{s-1} u_j(t_1+\dots+t_{j}+j-1)
    \Longleftrightarrow
  u_s \mid F_s\left( d,t_1,\dots,t_{s-1} \right). \]
  We will continue to use the right-hand side to express the divisibility condition of Proposition
  \ref{prop_enumerate_orbits}.
\end{remark}

\begin{example}
  For the orbit-type-vector $u=(3,1,1,2)$
  \begin{multline*}
    O_{\left(3,1,1,2\right)}(d)=\Bigg\{ \left( t_1^3,t_1+t_2+1^1,t_1+t_2+t_3+2^1,
      \frac{1}{2} \left(d-5t_1-2t_2-t_3-3 \right)^2 \right):
      \\
      \begin{aligned}
	&t_1=0,\dots,\left\lfloor \frac{d-9}{7} \right\rfloor,
	t_2=0,\dots,\left\lfloor \frac{d-7t_1-9}{4} \right\rfloor,\\
      &t_{3}=0,\dots,\left\lfloor \frac{d-7t_1-4t_2-9}{3} \right\rfloor,
      2 \mid d-7t_1-4t_2-3t_3-9
    \Bigg\}.
  \end{aligned}
  \end{multline*}
\end{example}

\bigskip

Now, we can start to investigate the $d$-dependence of the orbit-type-terms $R_u(d,e)$.
First, let us see what happens for the orbit-type-vectors of length one.
\begin{example}\label{ex_Rforlength1u}
  For $u=(n)$, $R_u(d,e)$ is a quasi-polynomial with period $n$: there are polynomials
  (its constituents)
  $R_{u,\bar{q}_n}(d,e) \in \Q[d,e]$ $\left( \bar{q}_n \in \Z/n\Z \right)$ such that
  \[ R_u(d,e)=R_{u,\overline{q}_n}(d,e) \quad \text{ if } \overline{d}_n=
  \overline{q}_n \left( \in \Z/n\Z \right). \]
  The constituents are
  \[ R_{u,\overline{q}_n}(d,e)=
    \begin{cases}
      1+\frac{d}{n}e_1 & \text{ if } \overline{q}_n=\overline{0}_n,
      \\[8pt]
      1  & \text{ otherwise.}
    \end{cases}
    \]
\end{example}

To obtain a similar expression for the orbit-type-vector $u=(u_1,\dots,u_s)$ in the $s>1$ case,
first, let us note that Proposition \ref{prop_enumerate_orbits} provides the following
factorization of the orbit-type-terms.
\begin{corollary}\label{cor_Ruasproducts}
  For every orbit-type-vector $u=(u_1,\dots,u_s)$ of length $s>1$
  \begin{multline}
    \label{eq_Rufactorized}
  R_u(d,e)=
  \prod_{t_1=0}^{\left\lfloor F_1(d)/N_1 \right\rfloor} \dots
  \prod_{\substack{t_{s-1}=0 \\ u_s \mid F_s(d,t_1,\dots,t_{s-1})}}^{\left\lfloor F_{s-1}(d,t_1,\dots,t_{s-2})/N_{s-1} \right\rfloor}
  \\
  P_u\left(t_1,\dots,t_1+\dots+t_{s-1}+s-2,
  \frac{1}{u_s}\big( d-\sum_{j=1}^{s-1} u_j(t_1+\dots+t_{j}+j-1)   \big) ,e\right),
\end{multline}
where $P_u(d_1,\dots,d_s,e)$ is the orbit-term, the polynomials $F_j(d,t_1,\dots,t_{j-1})$ and integers $N_j$ for $u$ were introduced in 
Proposition \ref{prop_enumerate_orbits} and Remark \ref{rmrk_Fsdef}.
\end{corollary}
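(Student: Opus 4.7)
The corollary is essentially a repackaging of Proposition \ref{prop_enumerate_orbits}: once that enumeration of $O_u(d)$ is available, together with the existence of the uniform orbit-term $P_u$, the iterated-product formula \eqref{eq_Rufactorized} falls out by direct substitution.

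The plan is the following. I would start from the definition
\[
R_u(d,e)=\prod_{(d_1^{u_1},\dots,d_s^{u_s}) \in O_u(d)} p_{(d_1^{u_1},\dots,d_s^{u_s})}(e)
\]
and use the fact, noted right after the definition of $P_u$, that $p_{(d_1^{u_1},\dots,d_s^{u_s})}(e)=P_u(d_1,\dots,d_s,e)$ uniformly for every element of $O_u(d)$. This reduces the task to rewriting the indexing set $O_u(d)$ as a nested range over integer parameters.

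Next, I would invoke Proposition \ref{prop_enumerate_orbits}, which provides a bijection between $O_u(d)$ and the set of tuples $(t_1,\dots,t_{s-1})\in\N^{s-1}$ satisfying $0\le t_j\le \lfloor F_j(d,t_1,\dots,t_{j-1})/N_j\rfloor$ for $j=1,\dots,s-1$, together with the congruence $u_s\mid F_s(d,t_1,\dots,t_{s-1})$ (using Remark \ref{rmrk_Fsdef} to rephrase the divisibility condition of the proposition). Under this bijection one reads off $d_j=t_1+\dots+t_j+(j-1)$ for $j\le s-1$ and $d_s=\tfrac{1}{u_s}\bigl(d-\sum_{j=1}^{s-1}u_j(t_1+\dots+t_j+j-1)\bigr)$. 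Substituting these into $P_u(d_1,\dots,d_s,e)$ and arranging the product as the iterated product over $t_1$, then $t_2$, and so on up to $t_{s-1}$, yields precisely the right-hand side of \eqref{eq_Rufactorized}.

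There is no substantive obstacle here; the hard work has already been done in establishing Proposition \ref{prop_enumerate_orbits}. The only thing to check is that the nested ranges and the innermost divisibility condition in \eqref{eq_Rufactorized} match the bijective parametrisation of $O_u(d)$ term-by-term, so that each element of $O_u(d)$ contributes exactly once to the iterated product on the right-hand side. This matching is essentially tautological once the proposition is in hand, and the only notational subtlety is that the divisibility is displayed in terms of $F_s$ rather than the equivalent form appearing in Proposition \ref{prop_enumerate_orbits}, which is precisely the content of Remark \ref{rmrk_Fsdef}.
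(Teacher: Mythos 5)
Your proposal is correct and matches the paper's treatment: the paper gives no separate proof of this corollary, presenting it as an immediate re-indexing of the product defining $R_u(d,e)$ via the enumeration of $O_u(d)$ in Proposition \ref{prop_enumerate_orbits}, with the divisibility condition rephrased through Remark \ref{rmrk_Fsdef} — exactly the reduction you describe.
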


This is a factorization similar to what we had for $c\big(\!\Pol^d(\C^n)\big) \in
\Q[d][[x_1,\dots,x_n]]^{S_n}$ in \eqref{eq_cpoldcn_asseriesofproducts}. 
The divisibility condition $u_s \mid F_s(d,t_1,\dots,t_{s-1})$ and that
the upper bounds of the products are given by floors of polynomials,
$\left\lfloor F_j(d,t_1,\dots,t_{j-1})/N_j \right\rfloor$ are major differences though, and will
lead to the quasi-formal power series behaviour of the orbit-type-terms:

\begin{proposition}\label{prop_modRu}
  For every orbit-type-vector $u=(u_1,\dots,u_s)$ the orbit-type-term is a
  \emph{quasi-formal power series} with period dividing $M(u)=\prod_{j=1}^s (u_j+\dots+u_s)$:
  there are formal power series (its \emph{constituents}) $R_{u,\overline{q}_{M(u)}}(d,e) \in
  \Q[d][[e]] \, \big(\overline{q}_{M(u)} \in \Z/M(u)\Z\big)$
  such that
  \[ R_{u}(d,e)=R_{u,\bar{q}_{M(u)}}(d,e)
  \quad \text{ if } \, \overline{d}_{M(u)}=\overline{q}_{M(u)}.\]
  Moreover, let $R_{u,\bar{q}_{M(u)}}(d,e)=:\sum_H R_{u,\bar{q}_{M(u)},H}(d)e^H$.
  Then for every coefficient  $R_{u,\bar{q}_{M(u)},H}(d) \in \Q[d]$
  \[ \deg(R_{u,\bar{q}_{M(u)},H}(d)) \le sH_1+(s+1)H_2+\dots+(s+n-1)H_n. \]
\end{proposition}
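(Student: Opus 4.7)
The plan is to realize $R_u(d,e)$ as a finite product of $(s-1)$-fold iterated rising products with polynomial upper bounds, then apply Theorem \ref{thrm_polynomialityofcoeffs_generalsetup} together with part (i) of Proposition \ref{prop_stepLW} inductively. The obstruction is that in \eqref{eq_Rufactorized} the upper bounds $\lfloor F_j/N_j\rfloor$ and the innermost condition $u_s\mid F_s$ are not polynomial in $d, t_1, \dots, t_{j-1}$; they become polynomial only after a residue splitting modulo $M(u)$.

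Fix a residue class $\overline{q}_{M(u)}$ and set $K_j := M(u)/N_j$ for $j = 1, \dots, s-1$, so that $N_j K_j = M(u)$. The key arithmetic observation is that for distinct $i, j$ we have $N_i N_j \mid M(u)$, hence $N_j \mid K_i$. Substituting $d = M(u) d' + q$ and $t_i = K_i \tau_i + r_i$ with $r_i \in \{0, \dots, K_i - 1\}$, a direct computation using the formula $F_j = d - \sum_{i<j} N_i t_i + c_j$ (with integer $c_j$) gives
\[
F_j = M(u)\Bigl(d' - \sum_{i<j} \tau_i\Bigr) + C_j(q, r_1, \dots, r_{j-1}),
\]
where $C_j \in \Z$ depends only on the fixed residues. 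Consequently $\lfloor F_j/N_j\rfloor = K_j\bigl(d' - \sum_{i<j}\tau_i\bigr) + \lfloor C_j/N_j\rfloor$ is a linear integer polynomial in $d', \tau_1, \dots, \tau_{j-1}$, and the upper bound for $\tau_j$ obtained from $t_j = K_j \tau_j + r_j \leq \lfloor F_j/N_j\rfloor$ simplifies to $\tau_j \leq \bigl(d' - \sum_{i<j}\tau_i\bigr) + \lfloor(\lfloor C_j/N_j\rfloor - r_j)/K_j\rfloor$, again a linear integer polynomial. The divisibility $u_s\mid F_s$ becomes the purely arithmetic condition $u_s\mid C_s$, so admissible residue tuples are simply selected.

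For each admissible tuple $(q, r_1, \dots, r_{s-1})$ the contribution to $R_u(d,e)$ is thus a genuine $(s-1)$-fold iterated rising product in $\tau_1, \dots, \tau_{s-1}$ with polynomial upper bounds, and Theorem \ref{thrm_polynomialityofcoeffs_generalsetup} applied successively from the innermost to the outermost layer yields polynomiality of each $e^H$ coefficient in $d'$, hence in $d$. For the degree bound, part (i) of Proposition \ref{prop_stepLW} applied at each of the $s-1$ layers adds $|H| = H_1 + \dots + H_n$ to the degree estimate. Starting from the orbit-term $P_u$, whose $e^H$ coefficient has degree at most $H_1 + 2H_2 + \dots + nH_n$ by \eqref{eq_orbittermdegreeestimate}, we arrive at $sH_1 + (s+1)H_2 + \dots + (s+n-1)H_n$ after $s-1$ iterations. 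Multiplying the finitely many residue contributions together preserves the bound by Lemma \ref{lemma_degrees4products}, yielding the constituent $R_{u, \overline{q}_{M(u)}}(d, e)$ and verifying that $M(u)$ is a period.

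The hard part is not any single conceptual step but the arithmetic identities for the substitutions, which rest on the felicitous relation $N_j K_j = M(u)$ and on $N_j \mid K_i$ for $i \neq j$. One must also handle boundary behaviour for small $d$ where the polytope of summation may be degenerate, but this does not affect the polynomial identities since two polynomials agreeing on an infinite set coincide. The choice $M(u) = \prod_j N_j$ is a convenient common multiple that makes every floor and divisibility collapse to an integer polynomial in a single substitution, without any further iterative splitting.
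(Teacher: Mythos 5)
Your proof is correct, but it organizes the congruence bookkeeping differently from the paper. The paper proceeds from the innermost product outward via Lemma \ref{lemma_congruenceclasspolys}: at each layer it groups \emph{consecutive} values of the index $t_{s-j-1}$ into blocks whose length is the current modulus $M_{s-j}(u)$, collapses each block into a single factor of a new rising product, and lets the modulus grow multiplicatively to $M(u)$ over $s-1$ iterations; the intermediate objects are families of power series indexed by congruence classes. You instead perform one global change of variables $d=M(u)d'+q$, $t_i=K_i\tau_i+r_i$ with $K_i=M(u)/N_i$, and the identities $N_iK_i=M(u)$ and $N_j\mid K_i$ make every floor $\lfloor F_j/N_j\rfloor$ and the divisibility $u_s\mid F_s$ collapse at once into linear integer bounds and a pure residue condition; this turns $R_u$ restricted to a residue class of $d$ into a finite product, over admissible residue tuples $(r_1,\dots,r_{s-1})$, of honest $(s-1)$-fold iterated rising products, to which Theorem \ref{thrm_polynomialityofcoeffs_generalsetup} and Proposition \ref{prop_stepLW}\ref{item_propstep1} apply directly. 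The degree accounting is identical in both arguments (start from \eqref{eq_orbittermdegreeestimate}, add $|H|$ per layer, finish with Lemma \ref{lemma_degrees4products}), and both yield the same period $M(u)$. What your route buys is the elimination of the intermediate lemma on congruence-indexed families and a more transparent reason for the value of $M(u)$ (it is exactly the common multiple that linearizes all floors simultaneously); what the paper's route buys is modularity --- a reusable single-layer statement --- and an explicit record of how the period accumulates factor by factor. Your treatment of degenerate ranges for small $d$ by appeal to polynomial identities is at the same level of rigor as the paper's own handling of the condition $F_j\ge 0$, so I do not count it as a gap.
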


The proof of Proposition \ref{prop_modRu} will be inductive: starting from the innermost
product of \eqref{eq_Rufactorized}, we go outwards to reach $R_u(d,e)$.
The induction step will be provided by the following lemma.
It can be regarded as a 
natural extension of Theorem \ref{thrm_polynomialityofcoeffs_generalsetup} and Proposition
\ref{prop_stepLW}
to rising products where the formal power series terms of the product depend on a
congruence class.

\begin{lemma}\label{lemma_congruenceclasspolys}
  Suppose that the formal power series $Q_{\overline{m}_M}(d,t,e) \in \Q[d,t][[e]] \,
  (\overline{m}_M \in \Z/M\Z)$ satisfy $Q_{\overline{m}_M}(d,t,0)=1$,
  where $d$ and $e$ may denote a list of variables: $d=(d_1,\dots,d_s)$ and
  $e=(e_1,\dots,e_n)$.
  Let $F(d) \in \Z[d]$ be a linear polynomial, $U \in \Z$ and $N \in \N_{>0}$.
  \begin{enumerate}[label=\roman*)]
    \item\label{item_existcongruenceclasspolys} 
      Then there exist formal power series
      $R_{\overline{q}_{MN}}(d,e) \in \Q[d][[e]] \, (\overline{q}_{MN} \in \Z/MN\Z)$ such that
      for every $d$ with $F(d) \ge 0$
      \[ R_{\overline{F(d)}_{MN}}(d,e)=
      \prod_{t=0}^{\left\lfloor \frac{F(d)}{N} \right\rfloor} Q_{\overline{F(d)-Ut}_M}(d,t,e). \]
    \item\label{item_degreecongruenceclasspolys}
      Let $Q_{\bar{m}_M}(d,t,e)=:\sum_E Q_{\bar{m}_M,E}(d,t)e^E$.
      If there exists a linear form $W:\Z^{\infty} \to \Z$ such that for every
      $\bar{m}_M \in \Z/M\Z$ and exponent $E \in \N^{\infty}$ $\deg(Q_{\bar{m}_M,E}(d,t)) \le
      W(E)$, then for every $\bar{q}_{MN} \in
      \Z/(MN)\Z$ and $H \in \N^{\infty}$
      \[ \deg(R_{\bar{q}_{MN},H}(d)) \le W(H)+|H|, \]
      where $R_{\bar{q}_{MN}}(d,e)=:\sum_H  R_{\bar{q}_{MN},H}(d)e^H$.
  \end{enumerate}
\end{lemma}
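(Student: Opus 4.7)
The strategy is to fix a residue class $\bar{q}_{MN} \in \Z/MN\Z$ and convert the congruence-varying product into finitely many conventional rising products of the kind handled in Section~\ref{sec_Stirlingcoefs}. Pick a representative $q \in \{0, 1, \ldots, MN-1\}$ of $\bar{q}_{MN}$ and, for any $d$ with $F(d) \ge 0$ and $F(d) \equiv q \pmod{MN}$, write $F(d) = MNk + q$ with $k = k(d) \in \N$ an integer-valued linear polynomial in $d$. Then $\lfloor F(d)/N\rfloor = Mk + r$, where $r := \lfloor q/N\rfloor \in \{0, \ldots, M-1\}$ is a constant depending only on $q$.

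The key combinatorial step is to write each index $t \in \{0, \ldots, Mk+r\}$ uniquely as $t = Ms + j$ with $j \in \{0, \ldots, M-1\}$ and $s \in \N$. Since $UMs \equiv 0 \pmod{M}$, the residue $\overline{F(d) - Ut}_M = \overline{q - Uj}_M$ depends only on $j$, so regrouping gives
\begin{equation*}
    \prod_{t=0}^{\lfloor F(d)/N\rfloor} Q_{\overline{F(d)-Ut}_M}(d, t, e) = \prod_{j=0}^{M-1} \prod_{s=0}^{L_j(d)} Q_{\overline{q-Uj}_M}(d, Ms + j, e),
\end{equation*}
where $L_j(d) := k(d) + \lfloor (r-j)/M\rfloor$ is an integer-valued linear polynomial in $d$ satisfying $L_j(d) \ge -1$ (since $k \ge 0$ and $|r-j| < M$). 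Each inner product is a genuine rising product in $s$ with input $P_j(d, s, e) := Q_{\overline{q-Uj}_M}(d, Ms+j, e)$, which satisfies $P_j(d, s, 0) = 1$. By Theorem~\ref{thrm_polynomialityofcoeffs_generalsetup} its $e^H$-coefficient lies in $\Q[d]$. Defining $R_{\bar{q}_{MN}}(d, e)$ as the product over $j \in \{0, \ldots, M-1\}$ of these rising products then yields~\ref{item_existcongruenceclasspolys}.

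For~\ref{item_degreecongruenceclasspolys}, observe that the substitution $t \mapsto Ms + j$ preserves total degree in the parameters, so $\deg Q_{\bar{m}_M, E}(d, t) \le W(E)$ yields $\deg P_{j, E}(d, s) \le W(E)$. Part~\ref{item_propstep1} of Proposition~\ref{prop_stepLW} then bounds the $d$-degree of the $e^H$-coefficient of the $j$-th inner rising product by $W(H) + |H|$, and Lemma~\ref{lemma_degrees4products} propagates this bound through the outer product over $j$, giving $\deg R_{\bar{q}_{MN}, H}(d) \le W(H) + |H|$. The main conceptual obstacle is identifying the correct period: $\lfloor F(d)/N\rfloor$ jumps according to $F(d) \pmod{N}$ while the residue class of the $Q$-factor depends on $F(d) \pmod{M}$, so only modulo $MN$ do both stabilise; the substitution $t = Ms + j$ is the trick that collapses both dependencies into $M$ parallel standard rising products, after which the polynomiality and degree results of Section~\ref{sec_Stirlingcoefs} apply off the shelf.
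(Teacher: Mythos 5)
Your proposal is correct, and it reaches the conclusion by a genuinely different decomposition of the index set than the paper uses. You split $\{0,\dots,\lfloor F(d)/N\rfloor\}$ into the $M$ arithmetic progressions $t=Ms+j$ ($j=0,\dots,M-1$), exploiting that $\overline{F(d)-Ut}_M$ depends only on $t \bmod M$; each progression then becomes a standard rising product in $s$ with linear, integer-valued upper limit $L_j(d)\ge -1$, and you multiply the $M$ resulting formal power series together. The paper instead cuts off a finite ``head'' $t=0,\dots,r$ (a finite product $E(d,e)$ handled directly by Lemma \ref{lemma_degrees4products}) and groups the remaining indices into $\delta(d)$ consecutive blocks of length $M$; the product over one block is a single series $B(d,\tau,e)$ independent of the block index modulo the substitution, so the tail is \emph{one} rising product in $\tau$. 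Both routes invoke Theorem \ref{thrm_polynomialityofcoeffs_generalsetup} for polynomiality, Proposition \ref{prop_stepLW}\ref{item_propstep1} for the degree bound $W(H)+|H|$ (using linearity of the respective upper limits $L_j(d)$, resp.\ $\delta(d)-1$), and Lemma \ref{lemma_degrees4products} to propagate the bound through the finite outer product; the bookkeeping is equivalent because a linear substitution $t\mapsto Ms+j$ (resp.\ $t\mapsto r+1+\eta+M\tau$) cannot increase total degree. Your version dispenses with the separate head product and the two-dimensional indexing diagram at the cost of running the rising-product machinery $M$ times instead of once; the paper's version concentrates the congruence data into the single block factor $B$. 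Your verifications of the key small facts --- $\lfloor F(d)/N\rfloor = Mk+r$ with $r=\lfloor q/N\rfloor$, the formula $L_j(d)=k(d)+\lfloor (r-j)/M\rfloor\in\{k(d),k(d)-1\}$, and $F(d)\ge 0 \Rightarrow k(d)\ge 0 \Rightarrow L_j(d)\ge -1$ --- are all sound.
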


\begin{proof}[Proof of Lemma \ref{lemma_congruenceclasspolys}]
  Let us fix a congruence class $\overline{q}_{MN} \in \Z/MN\Z$. To prove
  \ref{item_existcongruenceclasspolys} we have to show that there exists a formal power series
  $R_{\overline{q}_{MN}}(d,e) \in \Q[d][[e]]$ such that
  \begin{equation}\label{eq_powerseries_for_acongclass}
  R_{\bar{q}_{MN}}(d,e)=
    \prod_{t=0}^{\left\lfloor \frac{F(d)}{N} \right\rfloor} Q_{\overline{F(d)-Ut}_M}(d,t,e)
  \quad \text{ if } \overline{F(d)}_{MN}=\overline{q}_{MN}.
\end{equation}

  For the congruence class $\overline{q}_{MN} \in \Z/MN\Z$ there exist unique integers
  $r \in \left\{ 0,\dots,M-1 \right\}$ and $p \in \left\{ 0,\dots,N-1 \right\}$ such that
  $\overline{p+N r}_{MN}=\bar{q}_{MN}$.
  Then we can define a linear polynomial $\delta(d)=\frac{F(d)-p-Nr}{MN} \in \Q[d]$---
  integer valued for $d$ with $\overline{F(d)}_{MN}=\overline{q}_{MN}$---for which
  \begin{equation*}\label{eq_Fdexpansion}
    F(d)=p +Nr+\delta(d)MN
    \quad \text{ if }  \overline{F(d)}_{MN}=\overline{q}_{MN}.
  \end{equation*}

  This means if $\overline{F(d)}_{MN}=\overline{q}_{MN}$ we can group the
  indices $t=0,\dots,\lfloor \frac{F(d)}{N} \rfloor=r+\delta(d)M$ as
  \begin{equation}\label{array_tintogroups}
    \begin{gathered}
    \begin{array}{llcl}
      0 & 1 & \dots & r
    \end{array}
    \\
    \text{ and }
    \\
    \arraycolsep=1.5pt
    \begin{array}{lclcl}
      r+1 & \dots & r+1+\eta & \dots & r+1+M-1 \\
      \vdots & & \vdots & & \vdots \\
      r+1+M\tau & \dots & r+1+\eta+M\tau & \dots & r+1+M-1+M\tau \\
      \vdots & & \vdots & & \vdots \\
      r+1+M(\delta(d)-1) & \dots & r+1+\eta+M(\delta(d)-1) & \dots & r+1+M-1+M(\delta(d)-1),
    \end{array}
  \end{gathered}
\end{equation}
  and correspondingly write
  \begin{multline*}
    \prod_{t=0}^{\left\lfloor \frac{F(d)}{N} \right\rfloor} Q_{\overline{F(d)-Ut}_M}(d,t,e)=
    \\
    \underbrace{\prod_{t=0}^{r} Q_{\overline{F(d)-Ut}_M}(d,t,e)}_{E(d,e)}\cdot
    \underbrace{\prod_{\tau=0}^{\delta(d)-1}
    \underbrace{\prod_{\eta=0}^{M-1} Q_{\overline{F(d)-U(r+1+\eta+M\tau)}_M}(d,r+1+\eta+M\tau,e)}_{B(d,\tau,e)}}_{G(d,e)}.
  \end{multline*}

  The term $E(d,e)$
  is a finite product of elements of $\Q[d][[e]]$, therefore
  $E(d,e)=: \sum_{H} E_H(d) e^H \in \Q[d][[e]]$ and, by Lemma
  \ref{lemma_degrees4products}, $\deg(E_H(d)) \le W(H)$ for every exponent $H$.

  As for the second term, the vector of congruence classes corresponding to the $\tau$-th row of
  diagram \eqref{array_tintogroups}
  \begin{multline*}
    \left(\overline{F(d)-U(r+1+\eta+M\tau)}_M\right)_{\eta=0,\dots,M-1}=
    \\
    \left(\overline{p+Nr+\delta(d)MN-U(r+1+\eta+M\tau)}_M\right)_{\eta=0,\dots,M-1}=:
    \left( W_{\eta} \right)_{\eta=0,\dots,M-1} \subset \left(\Z/M\Z\right)^M
  \end{multline*}
  does not depend on $\tau$ nor on $\delta(d)$.
  Therefore, there exists a (single) formal power series
  \[ B(d,\tau,e):= \prod_{\eta=0}^{M-1} Q_{W_\eta}(d,r+1+\eta+M\tau,e) \in \Q[d,\tau][[e]] \]
  as above. As the substitution $t \mapsto r+1+\eta+M\tau$ is linear in $\tau$, Lemma
  \ref{lemma_degrees4products} implies that the coefficients
  $B_E(d,\tau)$ in the expansion
  $B(d,\tau,e)=:\sum_E B_E(d,\tau)e^E$ have degree $\deg(B_E(d)) \le  W(H)$.

  For $d$'s with $F(d) \ge 0$ we have $\delta(d)-1 \ge -1$.
  Therefore, we can use Theorem \ref{thrm_polynomialityofcoeffs_generalsetup} to deduce that for
  such $d$'s the rising product 
  \[ S[B,\delta(d)-1](d,e)= \prod_{\tau=0}^{\delta(d)-1} B(d,\tau,e)=
  \sum_{H} S[B,\delta(d)]_H(d) e^H  \]
  is described by a formal power series in $\Q[d][[e]]$.
  As $\delta(d)-1$ is linear, Proposition \ref{prop_stepLW} shows that
  $\deg(S[B,\delta(d)-1]_H(d)) \le W(H)+|H|$.

  Finally, let $R_{\overline{q}_{MN}}(d,e):=E(d,e) S[B,\delta(d)-1](d,e) \in \Q[d][[e]]$.
  Then, \eqref{eq_powerseries_for_acongclass} holds by definition, and Lemma
  \ref{lemma_degrees4products} gives that the degree of its coefficients have the desired upper
  bound.
\end{proof}

\begin{proof}[Proof of Proposition \ref{prop_modRu}]
  The $s=1$, $u=(n)$ case is covered by Example \ref{ex_Rforlength1u}. So let $u=(u_1,\dots,u_s)$
  be an orbit-type-vector of length $s>1$.

  For every congruence class $\overline{q}_{u_s} \in \Z/u_s\Z$ define the polynomial
  \begin{multline}\label{eq_Q0def}
    Q^{(0)}_{\bar{q}_{u_s}}(d,t_1,\shorterdots,t_{s-1},e)=\sum_H Q^{(0)}_{\bar{q}_{u_s},H}(d,t_1,\shorterdots,
    t_{s-1})e^H:=
    \\
    \begin{cases}
      P_u\Big(t_1,\shorterdots,t_1+\shorterdots+t_{s-1}+s-2,
	\frac{1}{u_s}\big( d-\sum\limits_{j=1}^{s-1} u_j(t_1+\dots+t_{j}+j-1) \big),
      e\Big) & \text{ if } \overline{q}_{u_s}=\overline{0}_{u_s}, \\[8pt]
      1 & \text{ otherwise},
    \end{cases}
  \end{multline}
  where $P_u(d_1,\dots,d_s,e)$ is the orbit-term for $u$.
  Then, by Corollary \ref{cor_Ruasproducts},
  \begin{equation}\label{eq_orbittypeterm_asproduct}
    R_u(d,e)=
    \prod_{t_1=0}^{\left\lfloor \frac{F_{1}(d)}{N_1} \right\rfloor} \dots
    \prod_{t_{s-1}=0}^{\left\lfloor \frac{F_{s-1}(d,t_1,\dots,t_{s-2})}{N_{s-1}} \right\rfloor}
    Q^{(0)}_{\overline{F_s(d,t_1,\dots,t_{s-1})}_{u_s}}(d,t_1,\dots,t_{s-1},e).
  \end{equation}

  An iterative use of Lemma \ref{lemma_congruenceclasspolys} will provide further sequences of
  formal power series
  \begin{multline}\label{eq_Qjsequences}
    Q^{(j)}_{\bar{q}_{M_{s-j} (u)}}(d,t_1,\dots,t_{s-j-1},e)
    \in \Q[d,t_1,\dots,t_{s-j-1}][[e]]
    \\
    \quad \Big( \bar{q}_{M_{s-j} (u)} \in \Z/M_{s-j} (u)\Z \text{ and } j=0,\dots,s-1 \Big),
  \end{multline}
  the $j$-th sequence indexed by congruence classes modulo
  $M_{s-j} (u):=\prod_{i=s-j}^s (u_i+\dots+u_s)$, such that for every $d,t_1,\dots,t_{s-j-2}$
  with $F_{s-j-1}(d,t_1,\dots,t_{s-j-2}) \ge 0$
  \begin{multline}\label{eq_Qjproperty}
  Q^{(j+1)}_{\overline{F_{s-j-1}(d,t_1,\dots,t_{s-j-2)})}_{M_{s-j-1}(u)}}(d,t_1,\dots,t_{s-j-2},e)=
  \\
  \prod_{t_{s-j-1}=0}^{\left \lfloor \frac{F_{s-j-1}(d,t_1,\dots,t_{s-j-2})}{N_{s-j-1}}\right\rfloor}
  Q^{(j)}_{\overline{F_{s-j}(d,t_1,\dots,t_{s-j-1})}_{M_{s-j}(u)}}(d,t_1,\dots,t_{s-j-1},e)
  \end{multline}
  for every $j=0,\dots,s-2$.

  As for every $d,t_1,\dots,t_{s-2}$ in \eqref{eq_orbittypeterm_asproduct}
  $F_{s-j-1}(d,t_1,\dots,t_{s-j-2}) \ge 0$ holds for every $j=0,\dots,s-2$,
  we can compose equations \eqref{eq_Qjproperty} to get that
  \begin{equation*}
    R_u(d,e)=Q^{(s-1)}_{\overline{F_1(d)}_{M_{1}(u)}}(d,e)=
    \prod_{t_1=0}^{\left\lfloor \frac{F_{1}(d)}{N_1} \right\rfloor} \dots
    \prod_{t_{s-1}=0}^{\left\lfloor \frac{F_{s-1}(d,t_1,\dots,t_{s-2})}{N_{s-1}} \right\rfloor}
    Q^{(0)}_{\overline{F_s(d,t_1,\dots,t_{s-1})}_{u_s}}(d,t_1,\dots,t_{s-1},e).
  \end{equation*}
  By definition, $M_{1}(u)=M(u)$ and $F_1(d)=d-\sum_{i=1}^s (i-1)u_i$, therefore, setting
  \begin{equation}\label{eq_Ruq_def}
    R_{u,\bar{q}_{M(u)}}(d,e):=Q^{(s-1)}_{\overline{q-\sum_{i=1}^s (i-1)u_i}_{M(u)}} \in
    \Q[d][[e]]
 \end{equation}
 finishes the proof of the first part of Proposition \ref{prop_modRu}.
  
 \medskip

  To show the existence of the sequences \eqref{eq_Qjsequences}, let us assume that the
  formal power series
  $Q^{(j)}_{\bar{q}_{M_{s-j} (u)}} \in \Q[d,t_1,\dots,t_{s-j-1}][[e]]$
  $\Big( \overline{q}_{M_{s-j} (u)} \in \Z/M_{s-j} (u)\Z \Big)$
  have been defined.
  Lemma \ref{lemma_congruenceclasspolys} (with $Q_{\bar{r}_M}=Q^{(j)}_{\bar{r}_{M_{s-j}(u)}}$,
    $t=t_{s-j-1}$, $F=F_{s-j-1}$, $N=N_{s-j-1}$ and
  $U=\sum_{i=s-j-1}^s u_i$) shows that there exist formal power series
  \begin{equation*}
    Q_{\bar{q}_{\underbrace{\scriptstyle{M_{s-j}(u)N_{s-j-1}}}_{M_{s-j-1}(u)}}}^{(j+1)}
    (d,t_1,\dots,t_{s-j-2},e)
    \quad \Big( \bar{q}_{M_{s-j-1} (u)} \in \Z/M_{s-j-1}(u)\Z \Big)
  \end{equation*}
  satisfying \eqref{eq_Qjproperty} whenever $F_{s-j-1}(d,t_1,\dots,t_{s-j-2}) \ge 0$.
  Here we used that for $U=\sum_{i=s-j-1}^su_i$ the
  identity
  \[ F_{s-j}(d,t_1,\dots,t_{s-j-1})=F_{s-j-1}(d,t_1,\dots,t_{s-j-2})-U t_{s-j-1}\]
  holds by definition.

  \medskip

  We conclude the proof by verifying the upper bound for $\deg(R_{u,\bar{q}_{M(u)},H}(d))$.
  As the substitutions 
  \[ d_1 \mapsto t_1,\,
    \dots,\, d_{s-1}\mapsto t_1+\dots+t_{s-1}+s-2,
  \, d_s \mapsto \frac{1}{u_s} \big( d-\sum_{j=1}^{s-1} u_j(t_j+\dots+t_{s-1}+j-1)  \big)  \]
   in \eqref{eq_Q0def} are linear,
   the coefficients of $Q^{(0)}_{\bar{q}_{u_s}}$ have the same degrees as those those of the
   orbit-term $P_u(d_1,\dots,d_s,e)$: That is, by \eqref{eq_orbittermdegreeestimate},
   the degrees of the coefficients $ Q_{\bar{q}_{u_s},H}^{(0)} $ are
   given by a linear form,
  \begin{equation*}\label{eq_orbitterm_ints_degreeestimate}
  \deg\left( Q_{\bar{q}_{u_s},H}^{(0)}(d,t_1,\dots,t_{s-1}) \right)
  =H_1+2H_2+\dots+nH_n.
  \end{equation*}
  The iterative application of part \ref{item_degreecongruenceclasspolys} of Lemma
  \ref{lemma_congruenceclasspolys} shows that for the coefficient
  $Q^{(j)}_{\bar{q}_{M_{s-j}(u)},H}$ of $e^H$ in 
  $Q^{(j)}_{\bar{q}_{M_{s-j}(u)}}$ we have
  \[ \deg\left( Q^{(j)}_{\bar{q}_{M_{s-j}(u)},H}(d,t_1,\dots,t_{s-j-1})\right) \le
  \sum_{i=1}^n (iH_i) + j|H| = \sum_{i=1}^n (j+i) H_i. \]
  Finally, by \eqref{eq_Ruq_def}, the $j=s-1$ case proves the degree part of Proposition
  \ref{prop_modRu}.
\end{proof}

\begin{example}\label{ex_biggestorbitRu}
  For the orbit-type-vector $u=\left( 1^{n} \right)$ the corresponding orbit-type-term
  $R_{(1^{n})}(d,e)$ is a quasi-formal power series with period dividing $M(u)=n!$:
  there are formal power series (its constituents) $R_{(1^{n}),\bar{q}_{n!}}(d,e) \in \Q[d][[e]]$
  $( \bar{q}_{n!} \in \Z/n!\Z )$ such that
  \[ R_{(1^{n})}(d,e)=R_{(1^{n}),\bar{q}_{n!}}(d,e) \quad \text{ if } 
  \overline{d}_{n!}=\overline{q}_{n!}. \]

  For their coefficients $R_{(1^{n}),\bar{q}_{n!},H}(d)$ in $R_{(1^{n}),\bar{q}_{n!}}(d,e)=
  \sum_H R_{(1^{n}),\bar{q}_{n!},H}(d) e^H$ we have
  \begin{equation*}
    \deg\left( R_{(1^{n}),\bar{q}_{n!},H}(d) \right) \le nH_1+(n+1)H_2+\dots+(2n-1)H_n.
  \end{equation*}
\end{example}

\begin{remark}\label{rmrk_numberoftypeuorbits_quasipol}
  For every orbit-type-vector $u=\left( 1^{n} \right)$ the number of increasing weak partitions
  in $O_u(d)$ is a quasi-polynomials of degree $s-1$ and period dividing
  $M(u)=\prod_{j=1}^s (u_j+\dots+u_s)$:
  This number is given by the coefficient of $x$ in
  \begin{equation*}
    \prod_{t_1=0}^{\left\lfloor F_1(d)/N_1 \right\rfloor} \dots
    \prod_{\substack{t_{s-1}=0 \\ u_s \mid F_s(d,t_1,\dots,t_{s-1})}}^{\left\lfloor F_{s-1}(d,t_1,\dots,t_{s-2})/N_{s-1} \right\rfloor}
    (1+x),
  \end{equation*}
  where the polynomials $F_j(d,t_1,\dots,t_{j-1})$ and integers $N_j$ for $u$ were introduced in 
  Proposition \ref{prop_enumerate_orbits} and Remark \ref{rmrk_Fsdef}.
  The same way we proved Proposition \ref{prop_modRu}, we can show that this product is a
  quasi-formal power series in $d$ with period dividing $M(u)$.
  In particular, the coefficient of $x$ is a quasi-polynomial in $d$ with period dividing $M(u)$
  whose degree can be computed product-by-product.
\end{remark}

\subsection{A degree estimate for the coefficients of \texorpdfstring{$c\big(\!\Pol^d(\C^n)\big)$}{c(Pold(Cn))} regarded as a quasi-formal power series}\label{see_subdivideN}
The group $\Z/M(u)\Z$---whose elements index the constituents
$R_{u,\bar{q}_{M(u)}} \in \Q[d][[e]]$
---may differ for different orbit-type-vectors $u$.
To make the different orbit-type-terms $R_u(d,e)$ compatible, we pick a single modulus $(n!)$
and index all the constituents with
congruence classes with respect to this modulus:

Luckily, $M(u) \mid n!$ holds for every orbit-type-vector $u$, so we can define
  \[ R_{u,\bar{q}_{n!}}(d,e):= R_{u,\overline{\pi(q)}_{M(u)}}(d,e) \quad \left( \bar{q}_{n!} \in \Z/n!\Z
  \right), \]
  where $\pi: \Z/n!\Z \to \Z/M(u)\Z$ denotes the natural projection.
  This way, for every $\bar{q}_{n!} \in \Z/n!\Z $ we can take the product
  $\prod_{u: |u|=n} R_{u,\overline{q}_{n!}}(d,e) \in \Q[d][[e]]$ and,
  by \eqref{eq_cPold_intoorbittypes},
  \begin{equation*}\label{eq_cPoldCnasprodRu}
  c( \Pol^d(\C^n) )=\prod_{\substack{u=(u_1,\dots,u_s) \\ |u|=n}} R_{u,\overline{q}_{n!}}(d,e)
  \quad \text{ if } \overline{d}_{n!}=\overline{q}_{n!},
  \end{equation*}
  or, in other words,
  \begin{equation}\label{eq_aHdCnascoefficientofprodRu}
    a_{\left(1^{H_1},\dots,n^{H_n}\right)}(d)=
    \coef( e^H, c( \Pol^d(\C^n) ))=
    \coef\Big( e^H, \prod_{\substack{u=(u_1,\dots,u_s) \\ |u|=n}} R_{u,\overline{q}_{n!}}(d,e) \Big)
      \quad \text{ if } \overline{d}_{n!}=\overline{q}_{n!}.
  \end{equation}

  Note that the fact that both sides of \eqref{eq_aHdCnascoefficientofprodRu} are
  polynomials agreeing at infinitely many points immediately implies that
  \eqref{eq_aHdCnascoefficientofprodRu} holds without the congruence assumption.
  As a consequence, we get that the product $\prod_{u: |u|=n} R_{u,\overline{q}_{n!}}(d,e)$ of
  quasi-formal power series $R_{u,\overline{q}_{n!}}(d,e)$ is a formal power series.

  Let $R_{u,\bar{q}_{n!}}(d,e)=\sum_H R_{u,\bar{q}_{n!},H}(d)e^H$. Then, by Proposition
  \ref{prop_modRu},
  \[ \deg(R_{u,\bar{q}_{n!},H}(d)) \le s H_1 + (s+1) H_2 + \dots + (s+n-1) H_n \]
  for every orbit-type-vector $u=(u_1,\dots,u_s)$ and exponent vector $H \in \N^n$.
  As the number of orbit-type-vectors is finite, we can apply Lemma \ref{lemma_degrees4products}
  to deduce that
  \begin{multline*}
    \deg\Bigg( a_{\left( 1^{H_1},\dots,n^{H_n} \right)}(d) \Bigg)=
    \deg\Bigg( \coef \Big( e^H,  \prod_{\substack{u=(u_1,\dots,u_s) \\ |u|=n}}
    R_{u,\bar{q}_{n!}}(d,e) \Big) \Bigg)
    \\
    \le \max_{\substack{u=(u_1,\dots,u_s) \\ |u|=n}} \sum_{i=1}^n (s-1+i)H_i=
    nH_1+(n+1)H_2+\dots+(2n-1)H_n,
  \end{multline*}
  see Example \ref{ex_biggestorbitRu}.
  This finishes the proof.

\bibliographystyle{alpha}  
\bibliography{cpold}

\begin{thebibliography}{Kom22}

\bibitem[Cig92]{cigler}
J.~Cigler.
\newblock A new {$q$}-analog of {S}tirling numbers.
\newblock {\em \"Osterreich. Akad. Wiss. Math.-Natur. Kl. Sitzungsber. II},
  201(1-10):97--109, 1992.

\bibitem[CZ20]{ciliberto_zaidenberg2020onfanoschemes}
C.~Ciliberto and M.~Zaidenberg.
\newblock On {F}ano schemes of complete intersections.
\newblock In {\em Polynomial Rings and Affine Algebraic Geometry}, pages 1--39.
  Springer International Publishing, 2020.

\bibitem[Dua03]{Duan-kostka}
Haibao Duan.
\newblock On the inverse {K}ostka matrix.
\newblock {\em Journal of Combinatorial Theory, Series A}, 103(2):363--376,
  2003.

\bibitem[Ehr67]{erhart1967polyedres}
E.~Ehrhart.
\newblock Sur un probl\`eme g\'eom\'etrie diophantienne lin\'eaire. {I}.
  poly\`dres et r\'eseaux.
\newblock {\em {J}. {R}eine {A}ngew. {M}ath.}, 1967(226):1--29, 1967.

\bibitem[FJ23]{feher2023plucker}
L{\'a}szl{\'o}~M. Feh{\'e}r and Andr{\'a}s~P. Juh{\'a}sz.
\newblock Pl{\"u}cker formulas using equivariant cohomology of coincident root
  strata.
\newblock {\em arXiv preprint arXiv:2312.06430}, 2023.

\bibitem[Fro96]{frobenius1896sitzungsberichte}
F.G. Frobenius.
\newblock {\em Sitzungsberichte der K{\"o}niglich Preussischen Akademie der
  Wissenschaften zu Berlin, 1896-1902}.
\newblock Die Akademie, 1896.

\bibitem[Har92]{harris1992alggeom}
J.~Harris.
\newblock {\em Algebraic Geometry: A First Course}.
\newblock Graduate Texts in Mathematics. Springer, 1992.

\bibitem[Hie17]{hiep2017numericalinvariantsfanoschemes}
Dang~Tuan Hiep.
\newblock Numerical invariants of {Fano} schemes of linear subspaces on
  complete intersections.
\newblock {\em arxiv https://arxiv.org/abs/1602.03659}, 2017.

\bibitem[Kom22]{komatsu-stirling-level2}
Takao Komatsu.
\newblock Stirling numbers with level 2 and poly-{B}ernoulli numbers with level
  2.
\newblock {\em Publ. Math. Debrecen}, 100(1-2):241--261, 2022.

\bibitem[LLT89]{lalat}
D.~Laksov, A.~Lascoux, and A.~Thorup.
\newblock On {G}iambelli's theorem on complete correlations.
\newblock {\em Acta Math.}, 162(3-4):143--199, 1989.

\bibitem[Mac98]{macdonald1998symmetric}
I.G. Macdonald.
\newblock {\em Symmetric Functions and Hall Polynomials}.
\newblock Oxford classic texts in the physical sciences. Clarendon Press, 1998.

\bibitem[Man99]{manivel1999surleshypersurfaces}
L.~Manivel.
\newblock Sur les hypersurfaces contenant des espaces lin\'{e}aires.
\newblock {\em Comptes Rendus de l'Acad\'{e}mie des Sciences - Series I -
  Mathematics}, 328(4):307--312, 1999.

\bibitem[Sch10]{schmidt-gen-stirling}
Maxie~D. Schmidt.
\newblock Generalized {$j$}-factorial functions, polynomials, and applications.
\newblock {\em J. Integer Seq.}, 13(6):Article 10.6.7, 54, 2010.

\bibitem[Shi20]{r-central}
F.~A. Shiha.
\newblock The {$r$}-central factorial numbers with even indices.
\newblock {\em Adv. Difference Equ.}, pages Paper No. 298, 16, 2020.

\bibitem[Sta97]{stanley1997enumerative_vol1}
R.P. Stanley.
\newblock {\em Enumerative Combinatorics: Volume 1}.
\newblock Cambridge Studies in Advanced Mathematics. Cambridge University
  Press, 1997.

\bibitem[Sta11]{stapledon2011equivehrhart}
A.~Stapledon.
\newblock Equivariant {Ehrhart} theory.
\newblock {\em Advances in Mathematics}, 226(4):3622--3654, 2011.

\bibitem[Sta23]{stanley2023enumerative_vol2}
R.~Stanley.
\newblock {\em Enumerative Combinatorics: Volume 2}.
\newblock Cambridge Studies in Advanced Mathematics. Cambridge University
  Press, 2023.

\bibitem[Twe18]{Tweedie}
Tweedie.
\newblock The {S}tirling numbers and polynomials.
\newblock {\em Proc. Edinburgh Math. Soc.}, 37, 1918.

\end{thebibliography}

\end{document}